\documentclass[11pt]{article}
\usepackage{amsmath, amscd, amssymb, latexsym, epsfig, color, amsthm, tikz}
\usetikzlibrary{positioning,shapes.geometric}
\usepackage{verbatim}
\usepackage{float}
\usepackage{booktabs}
\usepackage[hidelinks]{hyperref}
\numberwithin{equation}{section}
\newtheorem{theorem}{Theorem}[section]
\newtheorem{proposition}[theorem]{Proposition}
\newtheorem{corollary}[theorem]{Corollary}
\newtheorem{lemma}[theorem]{Lemma}
\theoremstyle{definition}
\newtheorem{definition}[theorem]{Definition}

\newtheorem{question}[theorem]{Question}
\newtheorem{conjecture}[theorem]{Conjecture}
\newtheorem{remark}[theorem]{Remark}
\DeclareMathOperator{\conv}{\mathrm{conv}}
\DeclareMathOperator{\aff}{\mathrm{aff}}
\DeclareMathOperator{\supp}{\mathrm{supp}}
\newcommand{\R}{{\mathbb R}}

\title{\LARGE
On Unavoidable Faces of High-Dimensional Polytopes}
\author{
	Jes\'us A. De Loera
	\qquad
	Ethan X. Fang
	\qquad
     Shengtao Guo
	\\[1ex]
	Junwei Lu
	\qquad
	Hailun Zheng
}
\date{}

\begin{document}
	\maketitle
\begin{abstract}
Kalai's cube--simplex conjecture asserts that for all positive integers $\ell,k$, there is an integer $f(\ell,k)$ such that every polytope of dimension at least $f(\ell,k)$ has either a simplex $\ell$-face or a cube $k$-face; let $f_s(\ell,k)$ denote the threshold restricted to simple polytopes. Finiteness of $f(\ell,k)$ is known only for $\ell,k \leq 2$. 
In addition, Kalai proved that $f_s(2,k) \leq 2k^2$.
Here we prove that $f_s(\ell,k)$ is finite for all $\ell \geq 2$ and $k \geq 3$, the first such result beyond $\ell = 2$, with $f_s(2,k) \leq 2k^2-1$ and $f_s(\ell,k) \leq \tfrac{1}{2}k^2\ell\,2^k$ for $\ell \geq 3$. In the opposite direction, we obtain the lower bounds
$f(\ell,k) \geq (5\lfloor \ell/2 \rfloor + (\ell \bmod 2) - 1)(k-1)+1$ and
$f_s(\ell,k) \geq \max\{4,\,2(\ell-1)\}(k-1)+1$.

A companion question asks for the minimum possible size of a 3-face within a higher-dimensional polytope. Meisinger, Kleinschmidt and Kalai proved that every rational $d$-polytope with $d \geq 9$ has a $3$-face with fewer than $78$ vertices or fewer than $78$ facets. Here we improve their bound: every convex polytope of dimension at least $15$ has a $3$-face with at most $13$ facets. One step of our proof requires an explicit exact rational certificate or identity on flag numbers. This certificate is computed using linear programming. 
\end{abstract}
\section{Introduction} 

%A classical consequence of Euler's formula for $3$-polytopes (dating back in dual form to Descartes \cite{Descartes1897,Federico1982}) is 
Faces of polyhedra has been an old pursuit of study in Geometry (see \cite{Grunbaum,HRZ} for an introduction to this old rich area). Nevertheless, simple natural questions remain unanswered: One can easily see that every $3$-polytope contains a $2$-face with $3$, $4$, or $5$ vertices \cite{Grunbaum}, 
but it is clear that the regular dodecahedron in three dimensions and the regular $120$-cell ($d=4$) have only pentagonal $2$-faces. This led Perles and Shephard \cite{PerlesShephard1967} to ask whether $d$-polytopes for $d \ge 5$ can avoid triangles and quadrilaterals entirely. More generally, one can wonder what faces must appear as the dimension of the polytope grows? 

Kalai \cite{Kalai} answered Perles and Shephard's question by proving that every $5$-polytope contains a $2$-face with $3$ or $4$ vertices and initiated a research path investigating the low-dimensional substructures that must appear in high-dimensional polytopes. It is remarkable that the proof uses known linear equations and inequalities about face vectors and flag vectors, which lead to an unsolvable system without integer solutions. This result had also fascinating consequences: there is no face-to-face tiling of $\mathbb{R}^d$ ($d \ge 5$) by regular cross-polytopes, sharpening results of Schulte \cite{Schulte1984}. Similarly, Gromov's ``no $\Delta$ no $\square$'' negative curvature condition for cubical complexes cannot hold for locally finite cubical manifolds of dimension $d \ge 5$ \cite{Gromov1987}.
Motivated by his work, Kalai later formulated the following precise definitions, a key global question, and a concrete conjecture in the same flavor as Ramsey's theorems:

\begin{definition}[\cite{Kalai}]
For integers $\ell, k \ge 1$:
\begin{itemize}
    \item $f(\ell, k)$ is the smallest integer such that every $d$-polytope with $d \ge f(\ell, k)$ contains either a simplex face $\sigma^\ell$ or a cube $k$-face $C^k$. If no such integer exists, $f(\ell, k) = \infty$.
    \item $f_s(\ell, k)$ is the corresponding threshold integer restricted to the class of \emph{simple} $d$-polytopes.
\end{itemize}
\end{definition}
		
\begin{question}
		Fix integers $\ell, k\geq 2$. Are $f_s(\ell,k)$ and $f(\ell,k)$ bounded? 
\end{question}

\begin{conjecture}[Kalai's Cube-Simplex Conjecture \cite{Kalai}]
For all integers $\ell, k \ge 1$, $f(\ell, k) < \infty$.
\end{conjecture}

Some results are known for special families of polytopes. For instance, McMullen \cite{McMullen1971} proved that every $d$-zonotope contains a $\lfloor (d+1)/2 \rfloor$-face that is a parallelotope (combinatorially a hypercube). Similarly, a simple application of Ramsey's theorem guarantees the presence of simplex or hypercube faces \cite{Chvatal1975} inside vertex-packing polytopes (indeed, all this research has the same flavor as Ramsey's theorem guaranteeing certain graphs must appear inside sufficiently large graphs).
But up until now one can summarize all existing bounds in a short table, see Table \ref{tab:bounds}.

\begin{table}[htbp]
\centering
\begin{tabular}{@{}llll@{}}
\toprule
\textbf{Case / Class} & \textbf{Bound / Value} & \textbf{Face Obtained} & \textbf{Reference} \\
\midrule
$f(1, 1)$ & $1$ & Edge & Trivial \\
$f(2, 2), f_s(2, 2)$ & $5$ & Triangle or Quadrilateral & Kalai (1990) \cite{Kalai} \\
$f_s(2, k)$ (Simple) & $\le 2k^2$ & Triangle or $k$-Cube & Kalai (1990) \cite{Kalai} \\
$f(\ell, k)$ for $\ell,k \geq 3$ & Unknown & $\ell$-simplex or $k$-cube & \textbf{Open} \cite{Kalai} \\
\bottomrule
\end{tabular}
\caption{Known status for face-containment thresholds in high dimensions.}
\label{tab:bounds}
\end{table}

Specifically about simple polytopes, Kalai had earlier used Nikulin's 
average face formulas \cite{Nikulin} and results of Blind and Blind \cite{BlindBlind1984,BlindBlind}, to establish bounds for simple polytopes:
\begin{theorem}[Kalai \cite{Kalai}]
Let $P$ be a simple $d$-polytope:
\begin{enumerate}
    \item If $d \ge 2k - 1$, $P$ has a $k$-face with fewer than $(k+1)(k+2)$ facets.
    \item If $d \ge 2k^2$, $P$ has a $k$-face with at most $2k$ facets.
    \item If $P$ has no triangular $2$-faces and $d \ge 2k^2$, then $P$ contains a $k$-face combinatorially isomorphic to a $k$-cube. In particular, $f_s(2, k) \le 2k^2 < \infty$.
    \item For any $\varepsilon > 0$, if $d > 2k^2/\varepsilon$, then more than $(1 - \varepsilon)f_k(P)$ of all $k$-faces of $P$ are isomorphic to $k$-cubes.
\end{enumerate}
\end{theorem}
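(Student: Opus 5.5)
The plan is to reduce everything to averaging arguments over the face lattice of a simple polytope, where face numbers and incidences are governed by the Dehn--Sommerville relations. Let $P$ be a simple $d$-polytope. Every $k$-face $F$ of $P$ is itself a simple $k$-polytope. Moreover $F$ is the intersection of exactly $d-k$ facets of $P$, and the facets of $F$ are the $(k-1)$-faces of $P$ contained in $F$.

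For item 1 I would use Nikulin's average face formula. Let $\bar f_{k-1}^{(k)}$ denote the average number of facets of a $k$-face of $P$. This average equals $\binom{d-k+1}{1} f_{k-1}(P)/f_k(P)$ divided appropriately. Concretely, each $(k-1)$-face lies in exactly $d-k+1$ $k$-faces, so the average is $(d-k+1) f_{k-1}/f_k$. Nikulin's bound (via Dehn--Sommerville on the $h$-vector) shows that for $d\ge 2k-1$ this ratio is less than $(k+1)(k+2)$; some $k$-face is at most the average, which gives item 1.

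For item 2 I would iterate the averaging along flags. Starting at a vertex, a $k$-face through a vertex is determined by a $k$-subset of the $d$ edges there. The average number of facets of a $k$-face, restricted to faces sharing a suitable structure, is computed from the average number of $(k-1)$-faces per $k$-face using the Blind--Blind type estimates. The target is to show this average is at most $2k + O(k^2/d)$, and that it is strictly below $2k+1$ once $d\ge 2k^2$. The underlying reason is that in high dimension almost all $k$-faces look locally like cubes. Since a simple $k$-polytope has at least $k+1$ facets, integrality then gives a $k$-face with at most $2k$ facets. \textbf{This estimate is the main obstacle}: bounding $f_{k-1}/f_k$ sharply enough needs the lower-bound (or $g$-type) inequalities on the $h$-vector in addition to Dehn--Sommerville. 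I would first try deriving it from $h_i\le h_{i+1}$ for $i<d/2$ together with a convexity estimate.

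Items 3 and 4 would follow from item 2 together with Blind and Blind's theorem. Blind--Blind states that a simple (more generally any) $k$-polytope without triangular $2$-faces has at least $2k$ facets, with equality only for the combinatorial $k$-cube. In a triangle-free $P$ every $k$-face is triangle-free, so a $k$-face with at most $2k$ facets must be a cube; this gives item 3 and $f_s(2,k)\le 2k^2$.

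For item 4, let $a_j$ be the proportion of $k$-faces with $j$ facets, and write $\mu$ for their average number of facets. The refined averaging gives $\mu < 2k+\varepsilon$ when $d>2k^2/\varepsilon$. The argument then has to handle the fact that non-cube faces can have fewer than $2k$ facets (for example simplices). To deal with this, I would apply the averaging to a weighted count of the form $\sum_F (\#\mathrm{facets}(F)-2k)^{+}$ combined with the deficiency from triangles. This would show that the non-cube fraction is at most $\varepsilon$, giving more than $(1-\varepsilon)f_k(P)$ cube $k$-faces.
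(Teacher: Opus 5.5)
There is a genuine gap. Items 1 and 3 are fine as you describe them: item 1 follows from Nikulin's theorem, and item 3 follows from item 2 plus the equality case of Blind--Blind. But item 2, on which items 3 and 4 rest, is never proved. You label it the main obstacle and offer only a vague plan: averaging along flags, ``Blind--Blind type estimates'', and $h_i\le h_{i+1}$ with a convexity estimate. Blind--Blind is a lower bound on facet numbers, so it cannot bound an average from above. Nor is any $g$-type input needed beyond Nikulin's theorem itself. The missing step is to evaluate exactly the same Nikulin bound you used for item 1. With $m=\lceil d/2\rceil$,
\[
\frac{f_{k-1,k}(P)}{f_k(P)}\ \le\ U_{k-1}(d,k)\ =\ \frac{k(2m-k+1)}{m-k+1}\ =\ 2k+\frac{k(k-1)}{m-k+1},
\]
which is the computation in Corollary~\ref{cor1: f_s(2,k)}. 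For $d\ge 2k^2$ we get $m-k+1\ge k^2-k+1>k(k-1)$. So the average number of facets of a $k$-face is below $2k+1$, and some $k$-face has at most $2k$ facets. At $d=2k-1$ the same formula gives $k(k+1)$, which is your item 1.

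Item 4 goes wrong more seriously. You try to handle non-cube $k$-faces with fewer than $2k$ facets, such as simplices, through a weighted count and a ``deficiency from triangles''. No such argument can work: the $d$-simplex is a simple $d$-polytope with no cube $k$-face for $k\ge2$. So item 4 is false unless it carries the hypothesis of item 3, that $P$ has no triangular $2$-faces, as in Kalai's original. Under that hypothesis the proof is one line. By the equality case of Blind--Blind, every non-cube $k$-face then has at least $2k+1$ facets, so
\[
2k+1-\frac{c_k(P)}{f_k(P)}\ \le\ \frac{f_{k-1,k}(P)}{f_k(P)}\ \le\ 2k+\frac{k(k-1)}{m-k+1}\ <\ 2k+\varepsilon .
\]
The last inequality holds because $m\ge d/2>k^2/\varepsilon$ gives $m-k+1>k(k-1)/\varepsilon$ whenever $0<\varepsilon\le 1$; for $\varepsilon>1$ the claim is trivial. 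Hence $c_k(P)>(1-\varepsilon)f_k(P)$. This is exactly the argument of Theorem~\ref{thm: bounded f_s} with $\alpha_{d,\ell,k}=0$.
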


\paragraph{Our Contributions:} Our paper improves the state of the art with three theorems:
\begin{itemize}

\item First, by generalizing Kalai's original argument that $f_s(2,k)$ is bounded \cite{Kalai}, we can show (see Section \ref{sec:upperboundfs}, in particular Corollary \ref{cor2: f_s(l,k)} for details)

\renewcommand{\thetheorem}{A}
\begin{theorem}
$f_s(\ell,k)$ is bounded for all $\ell\geq 2$ and $k\geq 3$. More precisely, we can prove that
$$f_s(\ell, k)\le \begin{cases}
2k^2-1 & \ell=2\\
\frac{1}{2}k^{2}\ell\,2^{k} & \ell\geq 3
\end{cases}.$$

\end{theorem}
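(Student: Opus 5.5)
Throughout, work in the dual setting. A simple $d$-polytope $P$ is dual to a simplicial polytope, but it is easier to argue directly on $P$. In a simple polytope every $k$-face $F$ is itself a simple $k$-polytope, and its facets are exactly the nonempty intersections $F\cap G$ with facets $G$ of $P$. The plan follows Kalai's averaging argument. Let $\bar\phi_k$ be the average number of facets of a $k$-face of $P$, taken over all $k$-faces. Nikulin's averaging bounds give $\bar\phi_k<2k+\varepsilon(d)$, with the error tending to $0$ as $d$ grows. Kalai's item 4 states the same thing in quantitative form: if $d>2k^2/\varepsilon$, then more than $(1-\varepsilon)f_k(P)$ of the $k$-faces are combinatorial cubes. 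So for $\ell=2$ only the threshold needs sharpening. I would redo Kalai's estimate with strict inequalities. The number of facets of a $k$-face is an integer, and a simple $k$-polytope with no triangular $2$-face and exactly $2k$ facets is a cube. Using integrality in the average-facet inequality should let $d=2k^2-1$ already force some $k$-face to have $\le 2k$ facets. This is the step that yields $f_s(2,k)\le 2k^2-1$.

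For $\ell\ge3$ I would argue by contradiction and induct through cube faces. Assume $P$ is simple with $\dim P=d\ge \tfrac12 k^2\ell 2^k$ and has no simplex $\ell$-face. The key observation is that in a simple polytope a $2$-face is a triangle exactly when some vertex link has a "missing" structure. Triangles are therefore allowed, and the cube argument must tolerate them. Instead I count \emph{pairs}: a $k$-face $F$ together with the facets of $P$ that cut it. Apply item 4 with $\varepsilon=1/(\ell 2^{k-1})$, which needs $d>2k^2\ell 2^{k-1}=k^2\ell 2^k$. (I would aim to halve this by using the $2k^2$ refinement above, giving $\tfrac12 k^2\ell 2^k$.) Then the fraction of $k$-faces that are \emph{not} cubes is less than $1/(\ell 2^{k-1})$.

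Next, fix a vertex $v$ and the $\binom{d}{k}$ $k$-faces through it. A double count shows that some vertex or small face sees many cube $k$-faces in a structured pattern. The concrete target is a combinatorial lemma: if every $k$-face through a suitable region is a cube except for a proportion smaller than $1/(\ell2^{k-1})$, then $P$ has a $k$-face that is a cube. The other possibility is that non-cube $k$-faces are forced to contain simplex $\ell$-faces. The mechanism for the second case is as follows. A simple $k$-polytope that is not a cube has more than $2k$ facets or has a triangular $2$-face. Truncation-type faces (products with simplices) are where simplex faces appear. I would classify $k$-faces with at most $2k$ facets. By Blind–Blind-type results, a simple $k$-polytope with $\le 2k$ facets and no simplex $\ell$-face is a product of simplices of dimension $<\ell$. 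Counting its vertices ($\le 2^k$ times a factor in $\ell$) and comparing with the averaging bound gives the $\ell 2^k$ factor.

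The main obstacle is this last step. It requires showing that many $k$-faces having few facets forces either a cube or a simplex $\ell$-face, rather than just products of small simplices. The first thing I would try is to iterate: inside a non-cube product face $\Delta^{a_1}\times\cdots\times\Delta^{a_r}$ with some $a_i\ge2$, pass to a larger face containing it and use the averaging again to grow the simplex factor until it reaches dimension $\ell$. Each growth step costs a factor of about $2^k$ from the vertex-to-face counting. If the iteration does not close, I would fall back to a weaker bound that is still finite.
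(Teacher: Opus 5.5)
Your $\ell=2$ half is essentially the paper's argument once the computation is carried out. By Nikulin, the average number of facets of a $k$-face is at most $U_{k-1}(d,k)=k(2m-k+1)/(m-k+1)$ with $m=\lceil d/2\rceil$, for both parities of $d$. This is $<2k+1$ exactly when $m\ge k^2$, i.e.\ $d\ge 2k^2-1$. So some $k$-face has at most $2k$ facets, and since there are no triangles, Blind--Blind makes it a cube.

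The $\ell\ge 3$ half has a genuine gap.
\begin{itemize}
\item Kalai's item 4 cannot hold for all simple polytopes: the $d$-simplex is simple and has no cube $2$-face. Like item 3, it assumes there are no triangular $2$-faces. The hypothesis ``no simplex $\ell$-face'' does not give you that.
\item If item 4 did apply, any $\varepsilon<1$ would already produce a cube. So the choice $\varepsilon=1/(\ell 2^{k-1})$ and your subsequent ``combinatorial lemma'' do no work; a non-cube proportion below $1$ trivially yields a cube.
\item Your classification step is false. Truncating one vertex of a triangular prism gives a simple $3$-polytope with $6=2k$ facets (two triangles, two quadrilaterals, two pentagons). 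It has no simplex $3$-face and is not a product of simplices.
\item The iteration that ``grows the simplex factor'' is not an argument, and you concede it may not close.
\end{itemize}

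The missing idea is to let triangles enter Kalai's comparison as a controlled error term. The paper does this in three steps.
\begin{itemize}
\item If a simple $k$-polytope $F$ is not a cube, then $f_{k-1}(F)\ge 2k+1-\tau_2(F)$. Blind--Blind handles the case $f_{k-1}(F)=2k$. For $f_{k-1}(F)\le 2k-1$, counting coinciding ``opposite'' facets at each vertex, together with Barnette's lower bound theorem, forces enough triangles.
\item In a simple polytope, two adjacent simplex facets force the polytope to be a simplex. Hence at each vertex $v$ the simplex faces through $v$ form a complex whose maximal faces are pairwise disjoint, each with at most $\ell-1$ elements. So at most $m_\ell(d)\le(\ell-2)d/2$ of the $\binom{d}{2}$ $2$-faces at $v$ are triangles. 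This gives $\tau_{2k}(P)\le\alpha_{d,\ell,k}f_{0k}(P)$ with $\alpha_{d,\ell,k}\le k(k-1)(\ell-2)/(6(d-1))$.
\item Summing the first inequality over all $k$-faces and applying Nikulin to both $f_{k-1,k}$ and $f_{0k}$ gives $U_{k-1}(d,k)+\alpha_{d,\ell,k}U_0(d,k)\ge 2k+1-c_k(P)/f_k(P)$.
\end{itemize}
The factor $2^k$ in the final bound does not come from vertex counts of product-of-simplices faces. It comes from $U_0(d,k)\approx 2^k$, the average number of vertices of a $k$-face, multiplied by the triangle density $\alpha_{d,\ell,k}$. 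Taking $d$ roughly $4k^2+\frac{1}{3}k(k-1)(\ell-2)2^k$ makes $U_{k-1}(d,k)<2k+\frac{3}{10}$ and the error term less than $\frac{4}{7}$. This yields $f_s(\ell,k)\le\frac{1}{2}k^2\ell\,2^k$.
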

\addtocounter{theorem}{-1} 

\item Second, in Section \ref{sec:lowerbound} we establish new lower bounds on $f_s(\ell, k)$. See Theorem \ref{thm:general} and Theorem \ref{thm:simple} for details. This can be compared with a result of Pfeifle showing on the basis of the Wythoff construction that $f(k, k)> (2k-1)(k-1)+1$ for $k\geq 3$ \cite[page 518]{Kal17}.

\renewcommand{\thetheorem}{B}
\begin{theorem}For all $\ell,k\ge2$,
    \[f(\ell, k) \ \ge\ (5\lfloor \ell/2\rfloor +(\ell \bmod 2)-1)(k-1)+1,\]
\[\text{and}\quad
f_s(\ell,k)\ \ge\ \max\{4,\ 2(\ell-1)\}\,(k-1)+1 .
\]
\end{theorem}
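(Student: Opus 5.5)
Both lower bounds will come from explicit constructions built as products. The basic observation is the following. Suppose $Q$ is an $m$-polytope with no $\ell$-simplex face and no $2$-dimensional cube face, i.e.\ no quadrilateral $2$-face. Then the product $Q^{k-1}$ has dimension $m(k-1)$, and we claim it has no $\ell$-simplex face and no $k$-cube face. Every face of $Q^{k-1}$ is a product $F_1\times\cdots\times F_{k-1}$ of faces $F_i$ of $Q$. A simplex is product-indecomposable, so a simplex face of the product must have all but one factor a vertex. It is therefore a simplex face of $Q$, and by hypothesis it has dimension less than $\ell$.

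For the cube, we use that a combinatorial $k$-cube decomposes as a product only into smaller cubes, that is, into products of edges. Hence a $k$-cube face $F_1\times\cdots\times F_{k-1}$ forces every $F_i$ to be a cube. Since there are only $k-1$ factors, some $F_i$ must be a cube of dimension at least $2$. But then $Q$ has a cube face of dimension at least $2$, and hence a quadrilateral $2$-face, which is a contradiction.

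It follows that $f(\ell,k)\ge m(k-1)+1$, and similarly for $f_s$, because products of simple polytopes are simple. The needed fact about the combinatorial factors of a cube will be justified by looking at the $2$-faces: a product face $E\times F$ with $E,F$ of positive dimension contains quadrilaterals formed from edges of $E$ and edges of $F$. So all factors of a cube are polytopes whose every $2$-face is a quadrilateral and which are "cube-like". Alternatively, one can use unique prime factorization of polytopes under products.

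It remains to find the base polytopes $Q$.

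\textbf{Simple case.} We need simple $Q$ with no quadrilateral $2$-faces and no $\ell$-simplex face, of dimension $\max\{4,2(\ell-1)\}$.
\begin{itemize}
\item The $120$-cell is a simple $4$-polytope with only pentagonal $2$-faces. It therefore has no triangles at all, so it has no $\ell$-simplex faces for any $\ell\ge2$. This gives $m=4$.
\item For larger $\ell$, take the product of $\ell-1$ dodecahedra. This has dimension $3(\ell-1)$, which is better than $2(\ell-1)$, but it contains quadrilaterals (products of edges). So products of this kind must be avoided.
\item Instead, take the prism-free product of $\ell-1$ polygons? That also fails because of quadrilaterals.
\item The natural candidate is the dual of a neighborly simplicial polytope, such as the dual cyclic polytope $C(n,2(\ell-1))^*$ with $n$ large. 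It is simple of dimension $2(\ell-1)$. Dual-neighborliness means every $\ell-1$ facets meet, and for $n$ large its $2$-faces have many vertices, hence no quadrilaterals.
\item We must still check that it has no $\ell$-simplex face. A simplex $\ell$-face of a simple polytope lies in exactly $d-\ell$ facets and meets $\ell+1$ further facets. We will verify via Gale's evenness condition that for $n$ large no such configuration occurs. This verification is the step I expect to be the main obstacle.
\end{itemize}

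\textbf{General case.} We need $Q$ of dimension $5\lfloor \ell/2\rfloor+(\ell\bmod 2)-1$ with no quadrilateral $2$-faces and no $\ell$-simplex face. The formula suggests using joins or free sums of copies of a $5$-dimensional block, since simplex dimensions add under joins. For example, use a join of $\lfloor\ell/2\rfloor$ copies of a $4$-polytope whose largest simplex faces are edges, such as the $120$-cell, which has edges but no triangles. The join of $j$ such polytopes has dimension $4j+(j-1)=5j-1$. Its largest simplex faces are joins of edges, of dimension $2j-1$. For $\ell=2j$ this gives no $\ell$-simplex, and its $2$-faces are pentagons, triangles (edge joined with a vertex), or vertex-joins, so there are no quadrilaterals. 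For odd $\ell$, join an extra vertex, which is a pyramid and adds $1$ to the dimension. This matches the formula $5\lfloor\ell/2\rfloor+(\ell\bmod2)-1$, after checking that the odd case does not create an $\ell$-simplex: a join of $\lfloor\ell/2\rfloor$ edges and one vertex has dimension $\ell-1$.

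One more check is needed here. Joins produce faces $F*G$, and a quadrilateral could arise only as the join of a $2$-face with the empty face, which is fine. Once the base polytopes are verified, the product lemma above finishes the proof.
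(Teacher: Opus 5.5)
Your general-case construction is the paper's own: a join of $\lfloor \ell/2\rfloor$ copies of the $120$-cell, plus one apex when $\ell$ is odd, followed by the $(k-1)$-fold product. Your product lemma is also sound. The cube part is cleanest by noting that each $F_i$ is isomorphic to the face $F_i\times\prod_{j\ne i}\{v_j\}$ of a cube, and faces of cubes are cubes. The $120$-cell product also settles the simple bound for $\ell\le 3$.

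The gap is the simple case for $\ell\ge 4$. Your base polytope $Q=C(n,2(\ell-1))^*$ has quadrilateral $2$-faces, and your claim that its $2$-faces have many vertices for large $n$ is false.
\begin{itemize}
\item For $n=2\ell$, one has $C(2\ell,2\ell-2)\cong$ the free sum $\sigma^{\ell-1}\oplus\sigma^{\ell-1}$. So $Q\cong\sigma^{\ell-1}\times\sigma^{\ell-1}$, whose edge-times-edge faces are squares.
\item For large $n$, let $d=2(\ell-1)$ and let $S$ consist of $\ell-3$ disjoint consecutive pairs together with two isolated indices $a,b$, all far apart and away from $1$ and $n$. Then $S$ is a $(d-3)$-face of $C(n,d)$. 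By Gale's evenness condition, the facets containing it are exactly $S\cup\{a',b'\}$ with $a'\in\{a-1,a+1\}$ and $b'\in\{b-1,b+1\}$. So the link of $S$ is a $4$-cycle, and the dual $2$-face of $Q$ is a quadrilateral.
\end{itemize}
Hence $c(Q)\ge 2$. Then $Q^{k-1}$ has a $2(k-1)$-cube face, and therefore a $k$-cube face, so it is not a witness at all.

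Ironically, the step you flagged as the main obstacle is immediate. An $i$-face of $Q$ is dual to a face $G$ of $C(n,d)$ with $d-i$ vertices. If $i\ge\ell$, then $|G|+1\le \ell-1$. By $(\ell-1)$-neighborliness every other vertex lies in the link of $G$, so the $i$-face has $n-d+i>i+1$ facets whenever $n>d+1$.

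What you are actually missing is a building block in dimension $2(\ell-1)$ that is simple, square-free, and has $s\le \ell-1$. The paper uses
$T(\ell-1,\ell-1)=[0,1]^{2\ell-1}\cap\{x_1+\cdots+x_{2\ell-1}=\ell-1+\theta\}$ with $0<\theta<1$. Each of its faces is a slice $\{x\in[0,1]^M:\sum_{i\in M}x_i=r+\theta\}$ of a subcube. This gives three properties:
\begin{itemize}
\item Its $2$-faces are triangles (when $r\in\{0,2\}$) or hexagons (when $r=1$), so there are no squares.
\item Simplex faces occur only for $r=0$ or $r=|M|-1$, and these have dimension at most $\ell-1$.
\item Each vertex $v+\theta e_i$ lies on exactly $2\ell-2$ facets, so the polytope is simple.
\end{itemize}
Feeding this $Q$ into your product lemma gives $f_s(\ell,k)\ge 2(\ell-1)(k-1)+1$. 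Without it, your argument only yields $f_s(\ell,k)\ge 4(k-1)+1$ for $\ell\ge 4$.
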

\addtocounter{theorem}{-1} 
\item Third, so far we have discussed  simplex or cube faces, but a natural question is to try to find the smallest size k-faces forced to appear as the dimension grows. For example, Meisinger, Kleinschmidt, and Kalai \cite{Meisinger2000,MKK-2000} proved, via the software \textsc{Flagtool} and toric $g_3$-invariants \cite{Stanley1987}, that every rational $d$-polytope with $d \ge 9$ contains a $3$-face with fewer than $78$ vertices or fewer than $78$ facets. Their proof combines linear programming with known linear inequalities about face vectors and flag vectors. In  Section \ref{sec:linearprogramming} we use similar tools to prove 

\renewcommand{\thetheorem}{C}
\begin{theorem}
 Every convex polytope of dimension $d \geq 15$ has a 3-face with at most 13 facets.
\end{theorem}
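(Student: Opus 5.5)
We argue by contradiction, following the flag-vector linear programming strategy of Kalai and of Meisinger--Kleinschmidt--Kalai. First reduce to $d=15$: every $15$-face of a higher-dimensional polytope is a $15$-polytope and its $3$-faces are $3$-faces of the original. So suppose $P$ is a $15$-polytope all of whose $3$-faces have at least $14$ facets, i.e.\ every $3$-face $F$ satisfies $f_2(F)\ge 14$. Each such $3$-face then also satisfies $f_1(F)\ge 3\cdot 14-6=36$ and $f_0(F)=f_1(F)-f_2(F)+2\ge 14$. Moreover, since $f_2(F)\le 2f_0(F)-4$, we get $f_0(F)\ge 9$, and the $2$-face sizes of $F$ average below $6$. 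Summing such local statements over all $3$-faces produces linear inequalities on the flag vector of $P$. A typical one is
\[
f_{2,3}(P)\ \ge\ 14\,f_3(P),
\]
together with analogous inequalities for $f_{1,3}$, $f_{0,3}$ and other chains through rank $3$. Further inequalities come from applying the same constraints inside every face $G$ of dimension $4,\dots,15$ and summing over $G$; this yields refined flag numbers such as $f_{2,3,j}\ge 14 f_{3,j}$.

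Next I assemble the system of valid linear relations on the flag vector $(f_S)_{S\subseteq\{0,\dots,14\}}$ of a $15$-polytope. It contains:
\begin{itemize}
\item the generalized Dehn--Sommerville (Bayer--Billera) equations, which cut the space down to dimension equal to the Fibonacci number $F_{16}$;
\item nonnegativity of the $cd$-index coefficients (Stanley) and the known Ehrenborg-type lower bounds for them, valid for all polytopes and not just rational ones;
\item convolution-type inequalities obtained by applying lower-dimensional valid inequalities to faces and to links, in the style of Kalai's $5$-polytope argument;
\item the ``large $3$-face'' inequalities above, with the adjustment that $3$-faces now have many facets.
\end{itemize}

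The goal is to show this system is infeasible with $f_{14}>0$. By linear programming duality, infeasibility is witnessed by a nonnegative combination of the inequalities, plus an arbitrary combination of the equations, that sums to a relation of the form $0 \ge c\cdot(\text{positive quantity})$ with $c>0$. I would solve the LP numerically, normalizing a convenient flag number such as $f_{0}$ or the total flag count to $1$. I would then extract the support of the dual solution and re-solve exactly in rational arithmetic, producing the explicit certificate identity the abstract mentions. Because this identity is a finite rational linear identity, it can be checked independently of any solver.

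The main obstacle is finding a family of valid inequalities strong enough to make $d=15$ and the threshold $13$ infeasible without relying on toric $g$-inequalities, which are valid only for rational polytopes. I would try the $cd$-index nonnegativity and the face/link convolutions first. If the LP remains feasible, I would add the sharper local constraints on each $3$-face, such as $f_2\le 2f_0-4$ and the bound on $f_1$, summed over $3$-faces inside $4$- and $5$-faces. I would also add the inequality that every $4$-face has at least $5$ facets, which is trivial. I would then adjust which dimension one restricts to, using the smallest $d$ that the LP certifies. Numerical instability in the large ($\sim$1000-variable) LP is a secondary risk, and I would address it with exact rational verification.
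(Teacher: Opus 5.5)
Your framework matches the paper's: reduce to $d=15$, work with flag vectors modulo the generalized Dehn--Sommerville relations, find a nonnegative combination of valid rows by LP, and re-verify it over $\mathbb{Q}$. But the proof lives entirely in which rows enter the LP, and there the proposal has three concrete problems.

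\textbf{Some of your local inequalities are false.} A $3$-polytope with $f_2\ge 14$ need not satisfy $f_1\ge 36$ or $f_0\ge 14$. The quantity $3f_2-6$ is an \emph{upper} bound for $f_1$, attained by simple $3$-polytopes. A simplicial $3$-polytope with $9$ vertices has $14$ facets and only $21$ edges. The correct consequences are $f_1(F)\ge 21$ and $f_0(F)\ge 9$; the latter is the paper's $M_3(14)=9$. Rows such as $f_{13}\ge 36f_3$ or $f_{03}\ge 14f_3$ would make any certificate you find unsound.

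\textbf{You discard the toric $g$-inequalities on a false premise.} You believe they hold only for rational polytopes, but Karu's hard Lefschetz theorem gives $g_i(P)\ge 0$ for every convex polytope. This is exactly how the paper removes the rationality hypothesis of Meisinger--Kleinschmidt--Kalai. The paper's unconditional rows are iterated convolutions of toric $g$-inequalities, Billera--Ehrenborg $cd$-minimality, and their duals. Meisinger--Kleinschmidt--Kalai already needed toric $g_3\ge 0$ for the far weaker bound $78$, so there is no reason to expect infeasibility from a pool without these rows.

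\textbf{You lack the strong conditional building blocks, and this is decisive.} The paper says these are what made the improvement possible, and that computing power alone would not have sufficed. Besides $f_2-14\ge 0$ on $3$-faces, the paper proves and uses as left anchors:
\begin{itemize}
\item $f_3(Q)\ge 15$ for every $4$-face, by the ridge argument;
\item $f_4(Q)\ge 24$ for every $5$-face: pass to $R=Q^*$ and combine $f_{12}(R)\ge 14f_1(R)$ and $f_1(R)\ge\lceil 15n/2\rceil$ with the flag upper bound $f_{12}(R)\le 6(n^2-6n+10)$;
\item $f_{r-1}\ge r+19$ for $5\le r\le 15$;
\item the vertex bounds $f_0\ge M_r(14)$, rising to $76$ at $r=15$.
\end{itemize}
These rest on nonlinear arguments and are not linear consequences of the $3$-face rows. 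Already for a single $4$-polytope, consider the convex combination $0.994\,x(\sigma^4)+0.006\,x(C(100,4)^*)$ of flag vectors. It satisfies every linear inequality valid for all $4$-polytopes, and it satisfies $f_{23}-14f_3\ge 0$ (the value is about $1.3$). Yet it has $f_3<6$.

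Your proposed remedies cannot fill this gap. The bound ``every $4$-face has at least $5$ facets'' holds for all $4$-polytopes and is already an unconditional row. Adding unconditional $3$-polytope constraints such as $f_2\le 2f_0-4$ to the anchor $f_2-14$ only produces nonnegative combinations of rows already present, since convolution is bilinear. So neither addition can change LP feasibility.
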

\addtocounter{theorem}{-1}

\end{itemize}

Note that unlike \cite{Meisinger2000,MKK-2000} we can state a theorem valid for all polytopes (not necessarily rational) thanks to the work of Karu \cite{Karu2004}. But at the same time one should contrast our results with pathological constructions that are possible when one considers general spheres versus just polytopes, see e.g., \cite{ADIPRASITO-2017}.

\noindent {\bf Notation:} In what follows we will let $s(P)$ denote the largest dimension of a simplex face of $P$, and let $c(P)$ be the largest dimension of a cube face of $P$; here $P$ itself counts as a face. Throughout, denote by $\Sigma$ the $120$-cell, by $\sigma^d$ the $d$-simplex and by $C^d$ the $d$-cube, with the convention $\sigma^{-1}=C^{-1}=\varnothing$.

\section{Boundedness of $f_s(\ell, k)$: Proof of Theorem A} \label{sec:upperboundfs}

As we said earlier Kalai used the following idea: A theorem of Blind--Blind \cite{BlindBlind} says that a $k$-face that is not a cube and has no triangular $2$-face has at least $2k+1$ facets; by a theorem of Nikulin \cite{Nikulin}, the average number of facets of a $k$-face of a simple $d$-polytope is bounded by a quantity that decreases as $d$ grows. In high dimensions, the two are incompatible, which settles the case $\ell=2$.

What we do here is to extend Kalai's idea from $\ell=2$ to general $\ell$ using  an additional ingredient. The hypothesis that $P$ has no simplex $\ell$-face does not forbid triangular $2$-faces outright. We therefore establish an upper bound on the number of triangular $2$-faces of such a polytope (Proposition \ref{prop: UB for triangles}), and run again Kalai's comparison with this bound entering as an error term.

%The proof compares two estimates for the average number of facets of a $k$-face of a simple $d$-polytope $P$. A $k$-face can have {\em few} facets in only two ways: it can have close to $k+1$ facets, which forces it to contain triangular $2$-faces (Lemma~\ref{lm: simple polytope property}), or it can be a cube, which has $2k$ facets and no triangular $2$-face at all; by a theorem of Blind--Blind \cite{BlindBlind}, every other $k$-face has at least $2k+1$ facets. Our two hypotheses rule out both: having no simplex $\ell$-face bounds the number of triangular $2$-faces above (Proposition~\ref{prop: UB for triangles}), and excluding cube $k$-faces rules out the other exception, so on average a $k$-face must have {\em many} facets. But a theorem of Nikulin \cite{Nikulin} bounds this average above by a quantity that shrinks as $d$ grows, which is impossible once $d$ is large.

For a $d$-polytope $P$ and $0\leq k\leq d-1$, let $\mathcal{F}_k(P)$ be the set of $k$-faces of $P$. Denote by $\tau_i(P)$ the number of simplex $i$-faces. In particular, $\tau_i(P)=f_i(P)$ for $i=0,1$. Throughout, we will consider the flag numbers 
\begin{equation*}
\begin{aligned}
f_{ik}(P)
&=\#\{(G,F):G\in\mathcal{F}_i(P),\ F\in\mathcal{F}_k(P),\ G\subseteq F\},\\
\tau_{ik}(P)
&=\#\{(G,F):G\cong\sigma^i,\ F\in\mathcal{F}_k(P),\ G\subseteq F\}.
\end{aligned}
\end{equation*} 
If $P$ is furthermore simple, then every $i$-face belongs to exactly $\binom{d-i}{k-i}$ $k$-faces. Hence double counting gives
\begin{equation}\label{eq: face_flag}
	f_{ik}(P)=\binom{d-i}{k-i}f_i(P), \quad \tau_{ik}(P)=\binom{d-i}{k-i}\tau_i(P).
\end{equation}

We begin by establishing several properties of simple polytopes. This first lemma gives a sufficient condition for a simple polytope to be a simplex.

\begin{lemma}\label{lm: adjacent simplex facet}
	Let $d\ge3$ and let $P$ be a simple $d$-polytope with two adjacent simplex facets. Then $P$ is a $d$-simplex.
\end{lemma}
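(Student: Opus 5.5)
The natural approach is to work in the dual. Let $P^*$ be the polar polytope, which is simplicial because $P$ is simple. A facet $F$ of $P$ corresponds to a vertex $v_F$ of $P^*$, and the face lattice of $F$ is dual to that of the vertex figure (link) of $v_F$ in $P^*$. Thus $F$ being a $(d-1)$-simplex means the link of $v_F$ is the boundary of a $(d-1)$-simplex, i.e.\ $v_F$ has exactly $d$ neighbours. Adjacency of the facets $F_1,F_2$ (they share a ridge) corresponds to $v_1v_2$ being an edge of $P^*$. So we must show: if a simplicial $d$-polytope $Q$ ($d\ge 3$) has two adjacent vertices $v_1,v_2$, each of degree $d$, then $Q$ is a simplex.

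I would argue directly in $P$ instead, since it is equally clean. Let $F_1,F_2$ be simplex facets sharing the ridge $R=F_1\cap F_2$, a $(d-2)$-simplex with $d-1$ vertices. $F_1$ has exactly one vertex $a\notin R$, and $F_2$ exactly one vertex $b\notin R$. Take any vertex $x$ of $R$. Since $P$ is simple, $x$ has exactly $d$ neighbours. Inside $R$ it has $d-2$ neighbours, since $R$ is a simplex. It is also adjacent to $a$ (within the simplex $F_1$) and to $b$ (within $F_2$). Here $a\ne b$, since otherwise $F_1$ and $F_2$ would have the same vertex set, hence $F_1=F_2$. So these are all $d$ neighbours of $x$. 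The same count holds at $a$: it is adjacent to all $d-1$ vertices of $R$, and one more edge remains. I now want to show that edge goes to $b$, or more directly that the vertex set $V=R\cup\{a,b\}$ is closed under adjacency.

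For this, pick $x\in R$. Its $d$ edges go to $(R\setminus\{x\})\cup\{a,b\}$, and by simplicity the cone at $x$ spanned by these $d$ edges contains $P$. Every face containing $x$ is spanned by a subset of these edges, so the $2$-face containing edges $xa$ and $xb$ exists. Call it $G$. Since $d\ge3$, we have $|R|\ge 2$. I would show $G$ is a triangle $xab$, which gives the edge $ab$. Consider a vertex $y$ of $G$ other than $x$: its neighbours along $G$ must lie among the vertices identified. The clean route is to observe that all of $a$'s $d$ neighbours, except possibly one, lie in $R$. The facet $F'$ of $P$ at $a$ opposite to $F_1$'s ridge direction is determined by the edge set at $a$.

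Alternatively, and this is the argument I would actually write, use connectivity. Let $S=R\cup\{a,b\}$. Each vertex of $R$ has all its neighbours in $S$. Both $a$ and $b$ have $d-1$ neighbours in $R$ and one more neighbour, say $a'$ and $b'$. Now apply the same reasoning with the roles of the facets changed. The $d-1$ edges at $a$ going into $R$ span $F_1$, and the remaining edge $aa'$ together with any $d-2$ of the edges $a\to R$ spans a facet. In particular, taking the edges $a\to R\setminus\{x\}$ plus $aa'$ gives a facet $H$ not containing $x$ but containing $a$. Similarly, at the vertex $y\in R\setminus\{x\}$, the facet through the edges $y\to (R\setminus\{x,y\})\cup\{a,b\}$ is the facet opposite $x$ at $y$. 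I expect this to be the key step: show $H$ equals the facet $F_x$ spanned at $y$ by its edges avoiding $x$, which contains $b$. Then $a'$ must be adjacent within $H$, forcing $a'=b$. Once $ab$ is an edge, every vertex of $S$ has all $d$ neighbours in $S$. The graph of $P$ is connected (Balinski), so $V(P)=S$, which has $d+1$ points, and $P$ is a $d$-simplex. The main obstacle is rigorously identifying the facet $H$, using the fact that in a simple polytope any $d-1$ edges at a vertex span a unique facet and that facets are determined by shared ridges.
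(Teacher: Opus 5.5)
Your setup is fine: the neighbourhoods of the vertices of $R$, the extra neighbours $a'$ and $b'$, and the closing connectivity argument all work. But the proof stops at the step that carries all the content, namely $a'=b$.

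\textbf{The identification $H=F_x$.} You leave this as an expectation, but it is immediate. $F_x$ contains $a$ (through the edge $ya$) and every vertex of $R\setminus\{x\}$, hence the $d-2$ edges from $a$ to $R\setminus\{x\}$. It does not contain $x$, hence not the edge $ax$. A facet through a vertex of a simple polytope omits exactly one edge at that vertex, so $F_x$ is the facet at $a$ omitting $ax$, i.e.\ $F_x=H$.

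\textbf{The actual gap.} The problem is your next sentence. Knowing that the single facet $H$ contains $a$, $a'$ and $b$ does not force $a'=b$, because $H$ is a $(d-1)$-face that a priori has many vertices. For $d=3$, say, all you know about $H$ is that it is a polygon containing the path $a'\,a\,y\,b$. That is consistent with $H$ being a pentagon with consecutive vertices $a',a,y,b,b'$. The contradiction has to come from outside $H$.

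\textbf{The missing idea.} Use all the facets at $a$ simultaneously. For each $z\in R$ pick $y\in R\setminus\{z\}$; this is exactly where $d\ge 3$, i.e.\ $|R|\ge 2$, is used. The argument above shows that the facet $H_z$ at $a$ omitting $az$ equals the facet at $y$ omitting $yz$, and the latter contains $b$. Hence $b\in\bigcap_{z\in R}H_z$. In a simple polytope the intersection of these $d-1$ facets through $a$ is precisely the edge $aa'$, so $b=a'$, and your connectivity argument then finishes.

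Alternatively, once $H=F_x$ is known, $F_1\cap H$ and $F_2\cap H$ are adjacent simplex facets of $H$, so you could induct on $d$. The base case $d=3$ would still need a separate argument.

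\textbf{Comparison with the paper.} The paper avoids this adjacency bookkeeping with affine geometry. Put $S=R\cup\{a,b\}$. The facet at a vertex $i\in R$ omitting the edge $ij$ contains the $d$ affinely independent points $S\setminus\{j\}$, so its hyperplane is $\aff(S\setminus\{j\})$. Using two different vertices $i\in R$ covers every $j\in S$, which gives $\conv S\subseteq P\subseteq\conv S$.
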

\begin{proof}
	Assume that the adjacent simplex facets are $A=\conv\{1,\dots,d\}$ and $B=\conv\{1,\dots,d-1,d+1\}$. In particular, vertices $1,\dots,d+1$ are affinely independent. Let $T:=\conv\{1,\dots,d+1\}\subseteq P$ and $H_j:=\aff\big([d+1]\setminus\{j\}\big)$, a hyperplane for each $j\in[d+1]$.
	
	Fix $1\le i\le d-1$. As $i\in A\cap B$ and $P$ is simple, the $d$ neighbours of $i$ are $[d+1]\setminus\{i\}$. For $j\neq i$, the facet containing $i$ but avoiding the edge $ij$ contains $[d+1]\setminus\{j\}$, hence equals $P\cap H_j$. Let $H_j^+$ be the closed halfspace bounded by $H_j$ containing $P$. Then
	\[ T\subseteq P\subseteq\bigcap_{1\le j\le d+1}H_j^+=T. \]
	Thus $P=T$, i.e., $P$ is a $d$-simplex.
\end{proof}

Our second lemma gives a lower bound on the number of facets in a simple polytope in terms of the triangular 2-faces. We will need the following well-known theorem of Blind-Blind \cite{BlindBlind}.
\begin{theorem}[Blind-Blind 1990] \label{thm: Blind-Blind}
	Let $P$ be a $d$-polytope with no triangular 2-faces. Then $f_i(P)\geq f_i(C^d)$ for all $0\leq i\leq d-1$. Furthermore, $P$ is a $d$-cube if equality holds for some $0\leq i\leq d-1$.
\end{theorem}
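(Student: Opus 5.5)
This is a cited result (Blind--Blind 1990), so I will reconstruct a proof rather than just invoke it. The plan is induction on $d$, combining a local count at vertices with the face structure of the facets. The key local fact is the following. Suppose $P$ has no triangular $2$-face and $v$ is a vertex. Then the vertex figure $P/v$ is a $(d-1)$-polytope whose vertices correspond to edges at $v$ and whose edges correspond to $2$-faces at $v$. Take two edges $vu$, $vw$ lying in a common $2$-face $G$. Since $G$ is not a triangle, $u$ and $w$ are not adjacent along $G$. Consequently, each $2$-face at $v$ contributes two neighbours of $v$ in the graph, and these neighbourhoods are distinct in a controlled way.

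I would prove first, for $i=0$, that $f_0(P)\ge 2^d$ with equality only for the cube. The argument goes by induction on $d$; the case $d=2$ holds because a polygon with no triangle has at least $4$ vertices. Take any facet $F$. It has no triangular $2$-faces, so by induction $f_0(F)\ge 2^{d-1}$. Next I would show that the vertices of $P$ outside $F$ number at least $f_0(F)$. To do this, I would pick a generic linear functional maximized on $F$ and use a shelling/``opposite facet'' argument. Each vertex $x$ of $F$ has an edge leaving $F$, and if we follow it, the endpoints must be distinct. The reason is that if two vertices $x,y$ of $F$ had out-of-$F$ edges to the same vertex $z$, one would look at the $2$-face containing $xz,yz$. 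Making this rigorous for nonsimple $P$ is the delicate part. The alternative I would try first is to use a facet $F$ and the ``antipodal'' face $F'$ minimizing the functional, and to argue that the no-triangle condition forces the projection along edges to be injective.

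For general $i$, I would proceed similarly by counting pairs. Each $i$-face of $P$ either lies in $F$ or not. Faces in $F$ contribute $f_i(F)\ge f_i(C^{d-1})$ by induction. For the others, I would aim for $\#\{i\text{-faces not in } F\}\ge f_{i-1}(C^{d-1})+f_i(C^{d-1})$, which matches the cube recursion $f_i(C^d)=2f_i(C^{d-1})+f_{i-1}(C^{d-1})$. To get the $f_{i-1}$ term I would use, for each $(i-1)$-face $G$ of $F$, the unique $i$-face containing $G$ and not contained in $F$ that is adjacent across the ridge structure. To get the $f_i$ term I would use a facet disjoint from, or ``opposite'' to, $F$, chosen via a linear functional. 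That facet exists with no common $i$-faces with $F$ once $P$ has no triangles: in a cube-like count, two facets sharing a ridge would otherwise force triangles in dimension $2$ by induction down to $2$-faces.

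The equality case follows from tracking where the inequalities are tight. If $f_i(P)=f_i(C^d)$, then the opposite facets are cubes by induction, the injections are bijections, and $P$ is combinatorially a prism over $C^{d-1}$, that is, $C^d$. The main obstacle, which I expect to be hardest, is proving the existence of a facet with no face in common with $F$ beyond the required ones, together with the injectivity of the edge map, for non-simple $P$. I would first attempt this via the vertex figure: since no $2$-face is a triangle, the link argument shows that distinct vertices of $F$ have distinct outgoing neighbours.
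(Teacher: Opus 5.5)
The paper does not prove this statement; it quotes it from \cite{BlindBlind} and only applies it to simple polytopes with $i=d-1$. Judged on its own, your sketch has genuine gaps.

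\textbf{Vertex count.} The injectivity of ``follow the edge leaving $F$'' is valid when $P$ is simple. Then each $x\in F$ has exactly one edge leaving $F$. If two such edges $xz,yz$ ended at the same $z\notin F$, the $2$-face they span at $z$ would meet $F$ in a proper face containing $x$ and $y$, i.e.\ in the edge $xy$, so that $2$-face would be a triangle. But this uses that any two edges at $z$ span a $2$-face, which fails at non-simple vertices. Also, a vertex of $F$ may have several edges leaving $F$; the degree-$4$ vertices of a rhombus of the rhombic dodecahedron already do. So ``the'' edge is not even defined. As written, you have $f_0(P)\ge 2^d$ only for simple $P$, and, as you acknowledge, the general case is left open.

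\textbf{General $i$.} Your count $f_i(P)\ge f_i(F)+f_{i-1}(F)+f_i(F')$ (with $f_{-1}:=0$) is fine once you have a facet $F'$ with $F\cap F'=\emptyset$. The middle term needs no hypothesis: for an $(i-1)$-face $G$ of $F$, the quotient $P/G$ has a vertex off its facet $F/G$. This gives an $i$-face $H\supset G$ with $H\cap F=G$, so $H$ determines $G$. (``Unique'' is false in general but not needed.) It is disjointness, not merely ``no common $i$-faces'', that keeps these $H$ from being counted again inside $F'$.

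But the existence of $F'$ is the whole content, and you do not supply it. Sharing a ridge cannot force a triangle, since adjacent facets of a cube share ridges. The face opposite $F$ with respect to its outer normal is usually not a facet: for a square of a regular pentagonal prism it is an edge. Nor is this a formality. For simple $P$, having no two disjoint facets means every two facets share a ridge, i.e.\ $P^*$ is $2$-neighborly. So your approach would in particular assert that no simple polytope with $2$-neighborly dual is triangle-free, which needs a real proof.

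\textbf{Equality case.} This is only asserted. Tightness gives equality in the induction for $F$ (so $F$ is a cube) and leaves no uncounted $i$-faces. Concluding $P\cong F\times[0,1]$ still requires an argument about how $F$ and $F'$ are joined.
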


\begin{lemma}\label{lm: simple polytope property}
	Let $k\ge3$ and let $P$ be a simple $k$-polytope which is not a cube. Then $f_{k-1}(P)\geq 2k+1-\tau_2(P)$.
\end{lemma}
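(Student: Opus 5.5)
We argue by induction on $k$, with two separate inputs. When $\tau_2(P)=0$ we use Theorem~\ref{thm: Blind-Blind}. When $\tau_2(P)\ge 1$ we use a facet-restriction argument. Two cases are immediate. If $\tau_2(P)\ge k$, the claim follows from $f_{k-1}(P)\ge k+1$. If $P$ is a $k$-simplex, then $\tau_2(P)=\binom{k+1}{3}\ge k$ for $k\ge3$, so this is again covered.

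\emph{Case $\tau_2(P)=0$.} Theorem~\ref{thm: Blind-Blind} with $i=k-1$ gives $f_{k-1}(P)\ge f_{k-1}(C^k)=2k$. Equality would force $P$ to be a cube, which is excluded, so $f_{k-1}(P)\ge 2k+1$.

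\emph{Base case $k=3$.} Let $p_n$ be the number of $n$-gonal faces, so $t=p_3=\tau_2(P)$. For a simple $3$-polytope, Euler's formula gives $\sum_n (6-n)p_n=12$. Every face with $n\ge4$ contributes at most $2$, so $3t+2(f_2-t)\ge 12$, that is, $f_2\ge 6-t/2$.
\begin{itemize}
\item If $t=0$, equality forces all faces to be quadrilaterals, and the only simple such polytope is the cube. Hence $f_2\ge7$ (this is also the previous case).
\item If $t=1$, we get $f_2\ge 5.5$, so $f_2\ge6=7-t$.
\item If $t\ge2$, then $6-t/2\ge 7-t$.
\end{itemize}
So the lemma holds for $k=3$.

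\emph{Inductive step, $k\ge4$ and $1\le\tau_2(P)<k$, $P$ not a simplex.} Fix a triangle $T=v_1v_2v_3$. Since $P$ is simple, $T$ lies in exactly $k-2$ facets, and each edge $v_iv_j$ lies in exactly one further facet $F_{ij}$. Thus only $k-2$ facets contain $T$, and we choose a facet $F$ not containing $T$. Then $F$ is a simple $(k-1)$-polytope. Distinct facets of $F$ come from distinct facets of $P$ adjacent to $F$, so $f_{k-2}(F)\le f_{k-1}(P)-1$. Moreover every triangle of $F$ is a triangle of $P$, and $T$ is not among them, so $\tau_2(F)\le\tau_2(P)-1$. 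If $F$ is not a cube, induction gives
\[
f_{k-1}(P)\ \ge\ 1+f_{k-2}(F)\ \ge\ 1+2(k-1)+1-\tau_2(F)\ \ge\ 2k+1-\tau_2(P).
\]
The point of choosing $F$ to avoid a triangle is exactly to recover the "$+1$" that a naive restriction to an arbitrary facet loses.

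\emph{Main obstacle: every facet avoiding $T$ is a cube.} I expect this to be the delicate case. There are two routes.
\begin{itemize}
\item First route: test the facets $F_{ij}$. Each contains the edge $v_iv_j$ but not $T$. I would try to show that $F_{12}$ cannot be a cube by examining the $2$-faces of $F_{12}$ at $v_1$, which come from intersecting with the facets through $v_1$. If $F_{12}$ contains a triangle, it is not a cube and the inductive bound applies.
\item Fallback: count directly. A cube facet $F$ has $2(k-1)$ facets of its own, so $P$ has at least $2(k-1)$ facets adjacent to $F$, plus $F$ itself, giving $f_{k-1}(P)\ge 2k-1$. It then remains to find $2-\tau_2(P)$ further facets. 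When $\tau_2(P)\ge2$ nothing more is needed. When $\tau_2(P)=1$, exactly one extra facet is needed. I would try to obtain it from the facets containing $T$: they are $k-2$ facets, and at most $2$ of them can be opposite faces of the cube $F$ in the same direction. I would compare these with the $2(k-1)$ neighbours of $F$ to locate a facet of $P$ that is neither $F$ nor a neighbour of $F$.
\end{itemize}
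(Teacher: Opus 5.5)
Your reductions are correct: the cases $\tau_2(P)=0$ and $\tau_2(P)\ge k$, the Euler-formula base case $k=3$, and the inductive step when some facet $F$ avoiding $T$ is not a cube (both $f_{k-2}(F)\le f_{k-1}(P)-1$ and $\tau_2(F)\le\tau_2(P)-1$ hold). Your fallback count also settles the obstacle case when $\tau_2(P)\ge 2$, because a cube facet already gives $f_{k-1}(P)\ge 2k-1$. The gap is the one case that remains: $\tau_2(P)=1$ and every facet avoiding $T$ is a $(k-1)$-cube. Here you still need $f_{k-1}(P)\ge 2k$, and neither route supplies it.

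In route one, $F_{12}$ misses $v_3$. When $T$ is the only triangle, $F_{12}$ therefore contains no triangle at all, so the test ``if $F_{12}$ contains a triangle'' never fires. Nothing in the local structure at $v_1$ stops $F_{12}$ from being a cube. In route two, take $F=F_{ij}$. Each of the $k-2$ facets through $T$ contains $v_i\in F$. In a simple polytope, two facets with a common point meet in a ridge, so all of them are already among the $2(k-1)$ neighbours of $F$. The extra facet cannot be found there.

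The missing idea is a vertex-local pigeonhole argument, which is the core of the paper's proof. Since $f_0(P)\ge k+1>3$, some vertex $w$ lies on no triangle. Let $u_1,\dots,u_k$ be its neighbours, and let $G_i$ be the unique facet through $u_i$ that does not contain $w$. Suppose $G_i=G_j$ for some $i\ne j$. Then $u_i$ and $u_j$ both lie in $k-1$ distinct facets: $G_i$ together with the $k-2$ facets at $w$ that contain both of them. These $k-1$ facets intersect in an edge, so $u_iu_j$ is an edge and $wu_iu_j$ is a triangle, contradicting the choice of $w$. Hence the $G_i$ are $k$ distinct facets missing $w$. Adding the $k$ facets through $w$ gives $f_{k-1}(P)\ge 2k=2k+1-\tau_2(P)$.

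The paper uses a quantitative form of this at every vertex: each vertex lies on at least $2k-f_{k-1}(P)$ triangles. It sums over vertices and combines this with Blind--Blind, Barnette's lower bound theorem, and a concave-quadratic estimate. With the patch above, your induction becomes a valid and genuinely different proof that avoids Barnette's theorem. Without it, the proposal is incomplete.
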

\begin{proof}
We claim that, in proving the statement, we can assume $f_{k-1}(P) \leq 2k-1$.
If $f_{k-1}(P)\ge 2k+1$, then, since $\tau_2(P)\ge 0$ always, $f_{k-1}(P)\ge 2k+1\ge 2k+1-\tau_2(P)$ trivially. On the other hand, if $f_{k-1}(P)=2k$: it suffices to show $\tau_2(P)\ge 1$, since then $f_{k-1}(P)=2k\ge 2k+1-\tau_2(P)$. Suppose instead $\tau_2(P)=0$, i.e.,\ $P$ has no triangular $2$-face. Apply Theorem~\ref{thm: Blind-Blind} with $d=k$ and $i=k-1$: it gives $f_{k-1}(P)\geq f_{k-1}(C^k)=2k$, and its equality case states that if equality holds for \emph{some} index $i$, then $P$ must be combinatorially a $k$-cube. Here $f_{k-1}(P)=2k=f_{k-1}(C^k)$, so equality does hold at $i=k-1$, forcing $P\cong C^k$. This contradicts the hypothesis that $P$ is not a cube. Hence $\tau_2(P)\ge 1$ after all.

Next, let $v$ be a vertex of $P$. Since $P$ is a simple $k$-polytope, assume that $u_1, \dots, u_k$ are the neighbors of $v$, and let $H_i$ be the facet where $v\in H_i$, $u_j\in H_i$ for $j\neq i$, and $u_i\notin H_i$. We claim that for each $i$, there is a unique facet $G_i$ with $u_i\in G_i$ and $v\notin G_i$. 
Indeed, since $P$ is simple, the vertex $u_i$ lies on exactly $k$ facets. For each $j\neq i$, the facet $H_j$ contains all neighbors of $v$ except $u_j$; since $i\neq j$, this means $u_i\in H_j$. Thus all $k-1$ facets $H_j$, $j\neq i$, pass through $u_i$, and they are pairwise distinct (being $k-1$ of the $k$ distinct facets at $v$). Since $u_i$ lies on exactly $k$ facets in total, there is exactly one facet at $u_i$ other than $\{H_j : j\neq i\}$; call it $G_i$. 
This gives existence and uniqueness of a facet through $u_i$ distinct from the $H_j$'s, so it only remains to verify $v\notin G_i$. Suppose for contradiction $v\in G_i$. The facets containing $v$ are, by definition, exactly $H_1,\dots,H_k$, so $G_i$ must equal some $H_\ell$. Since $G_i\neq H_j$ for $j\neq i$ by construction, we must have $\ell=i$, i.e.\ $G_i=H_i$. But $H_i$ was defined as the facet at $v$ \emph{excluding} $u_i$, while $u_i\in G_i$ by construction — a contradiction. Hence $v\notin G_i$.
	
	If $G_i=G_j$ for some $i\ne j$, then $u_i$ and $u_j$ both lie in the $k-1$ facets $\{H_\ell:\ell\neq i, j\}\cup\{G_i\}$, whose intersection is an edge since $P$ is simple. Hence $u_iu_j$ is an edge and thus $u_iu_jv=\bigcap_{\ell\neq i, j}H_\ell$ is a triangle. Assume that $G_1, \dots, G_r$ are the distinct facets among the $G_1,\dots,G_k$. In particular, $r\leq f_{k-1}(P)-k$ as $G_i$'s are distinct from the $k$ facets $H_1,\dots,H_k$ containing $v$. 
    
    Therefore, the number of triangular $2$-faces containing $v$ is at least
	\begin{equation*}
		\begin{split}
			\#\{\{i, j\}: G_i=G_j, \;1\leq i<j\leq k\}&=\sum_{i=1}^r\binom{\#\{j: G_j=G_i\}}{2}\geq \sum_{i=1}^r \left(\#\{j: G_j= G_i\}-1\right)\\
			&=k-r\geq k-(f_{k-1}(P)-k)=2k-f_{k-1}(P).
		\end{split}
	\end{equation*}
	Summing over all the vertices, we obtain that
	\begin{equation*}\label{eq: local triangle}
		\begin{split}
			3\tau_2(P)&=\sum_{v} \#\{\text{triangular 2-faces containing $v$}\}\\
			&\geq \sum_{v} (2k-f_{k-1}(P))\ =\ f_0(P)(2k-f_{k-1}(P))\\   
			&\stackrel{(*)}{\geq} \big((k-1)f_{k-1}(P)-(k+1)(k-2)\big) (2k-f_{k-1}(P))\\
			&=-(k-1)f_{k-1}(P)^2+(3k^2-3k-2)f_{k-1}(P)-2k(k+1)(k-2),		
		\end{split}
	\end{equation*}
	where $(*)$ follows from Barnette's lower bound theorem for the number of vertices $f_0(P)$ \cite{Barnette71}. 
    
    Now we add $3f_{k-1}(P)$ to both sides of the inequality,
	\[ 3\tau_2(P)+3f_{k-1}(P)\ \geq\ -(k-1)f_{k-1}(P)^2+(3k^2-3k+1)f_{k-1}(P)-2k(k+1)(k-2). \]
	The right-hand side is a concave quadratic function of $f_{k-1}(P)$. Since as we saw at the beginning $k+1\leq f_{k-1}(P)\leq 2k-1$, the minimum occurs at one of the two endpoints, namely $\min\{(k+1)(k+2),\,k(k+4)\}$. When $k\geq 4$, both are at least $6k+3$, which leads to $\tau_2(P)+f_{k-1}(P)\geq 2k+1$. For $k=3$ we have $f_2(P)\in\{4,5\}$: if $f_2(P)=5$ the above gives $k(k+4)=21=6k+3$, and if $f_2(P)=4$ then $P=\sigma^3$ and $\tau_2(P)=4\geq 3=2k+1-f_2(P)$. This proves the lemma.
\end{proof}

Next, we introduce three numbers, each of which has an explicit combinatorial meaning to be explained later. 
\begin{definition}
	Define $$m_\ell(d):=a\binom{\ell-1}{2}+\binom{r}{2}, \quad \text{where } d=a(\ell-1)+r \text{ and } 0\leq r< \ell-1,$$
	$$\text{and} \quad \alpha_{d, \ell, k}:=\frac{\binom{k}{2}m_\ell(d)}{3\binom{d}{2}}.$$ 
	For $0\leq i<k$, we let $$U_i(d, k):=\binom{d-i}{k-i}\frac{\binom{\lfloor d/2\rfloor}{i}+\binom{\lceil d/2\rceil}{i}}{\binom{\lfloor d/2\rfloor}{k}+\binom{\lceil d/2\rceil}{k}}.$$
\end{definition}
The number $m_\ell(d)$ is the maximum number of edges in a disjoint union of cliques on a total of $d$ vertices, each clique having size $\leq \ell-1$. In particular, $m_2(d)=0$. 
The following theorem due to Nikulin \cite{Nikulin} (see also \cite[Section 5.1]{Khovanskii}) establishes $U_i(d, k)$ as an upper bound on the average number of $i$-faces in a $k$-face of a simple $d$-polytope.

\begin{theorem}[Nikulin] \label{thm: Nikulin}
	Let $P$ be a simple $d$-polytope. For $0\leq i<k\leq \lfloor \frac{d+1}{2}\rfloor$, $$\frac{f_{ik}(P)}{f_k(P)}\leq U_i(d, k).$$
\end{theorem}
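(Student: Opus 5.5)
Since $P$ is simple, \eqref{eq: face_flag} gives $f_{ik}(P)=\binom{d-i}{k-i}f_i(P)$. The binomial factor $\binom{d-i}{k-i}$ already appears in $U_i(d,k)$, so the statement is equivalent to
\[
\frac{f_i(P)}{f_k(P)}\ \le\ \frac{\binom{\lfloor d/2\rfloor}{i}+\binom{\lceil d/2\rceil}{i}}{\binom{\lfloor d/2\rfloor}{k}+\binom{\lceil d/2\rceil}{k}} .
\]
I would prove this through the $h$-vector of $P$.

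First I set up the expansion. Fix a generic linear functional, and let $h_j$ be the number of vertices of $P$ with exactly $j$ edges pointing upward. Each $i$-face has a unique lowest vertex, and at that vertex the face is spanned by $i$ of the upward edges. Hence
\[
f_i(P)=\sum_{j=0}^d \binom{j}{i}h_j .
\]
We also know $h_j\ge0$, and Dehn--Sommerville gives $h_j=h_{d-j}$ (reverse the functional). Grouping $j$ with $d-j$, I get
\[
f_i(P)=\sum_{0\le j\le d/2} c_j\, h_j\Big(\tbinom{j}{i}+\tbinom{d-j}{i}\Big),
\]
where $c_j=1$, except for the middle index $j=d/2$ when $d$ is even, where $c_j=\tfrac12$. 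The same expansion holds with $k$ in place of $i$. Since $k\le\lfloor (d+1)/2\rfloor\le d-j$, each $k$-coefficient $\binom{j}{k}+\binom{d-j}{k}$ is positive. By the mediant inequality, $f_i/f_k$ is therefore at most $\max_{0\le j\le d/2} r_j$, where
\[
r_j:=\frac{\binom{j}{i}+\binom{d-j}{i}}{\binom{j}{k}+\binom{d-j}{k}} .
\]
The value at $j=\lfloor d/2\rfloor$ is exactly the right-hand side above. When $d$ is even this holds because the factor $2$ cancels between numerator and denominator.

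It then remains to show that $r_j\le r_{\lfloor d/2\rfloor}$ for all $0\le j\le d/2$. I expect this to be the main obstacle. The case $i=0$ is easy: the numerator is constantly $2$, and the denominator $\binom{x}{k}+\binom{d-x}{k}$ is a symmetric convex function of $x$, so it is minimized at the middle. For general $i<k$, I would clear denominators and compare $r_j$ with $r_{j+1}$ directly. The key facts are that $\binom{m}{k}/\binom{m}{i}$ is increasing in $m$, and that the numerator and denominator both come from the convex symmetric functions $x\mapsto\binom{x}{i}+\binom{d-x}{i}$ and $x\mapsto\binom{x}{k}+\binom{d-x}{k}$. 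Moving $j$ toward $d/2$ decreases the larger term $\binom{d-j}{\cdot}$, and it does so relatively more for $k$ than for $i$. The hypothesis $k\le\lfloor(d+1)/2\rfloor$ should enter here, by keeping $\binom{j}{k}$ from dominating near the middle.

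If this pairwise monotonicity turns out to fail for some parameters, I would fall back on the stronger input from the $g$-theorem, valid for all polytopes (McMullen/Stanley): $h_0\le h_1\le\dots\le h_{\lfloor d/2\rfloor}$. I would write $h_j=\sum_{t\le \min(j,d-j)} g_t$ with $g_t\ge0$. This expresses
\[
f_i=\sum_t g_t A_t(i),\qquad A_t(i)=\sum_{j=t}^{d-t}\binom{j}{i},
\]
and the mediant argument then reduces the claim to showing that $A_t(i)/A_t(k)$ is maximized at $t=\lfloor d/2\rfloor$. Passing from $t$ to $t+1$ removes the two outer terms $j=t$ and $j=d-t$. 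Using the monotonicity of $\binom{m}{k}/\binom{m}{i}$ in $m$ and the bound on $k$, I would check that the removed pair has a $k/i$ ratio at least that of the remaining block. This yields the claimed bound $U_i(d,k)$.
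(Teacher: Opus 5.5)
The paper gives no proof of this statement; it cites Nikulin (see also Khovanskii, \S5.1). Your argument therefore has to stand on its own, and its first half is sound. The identities $f_{ik}=\binom{d-i}{k-i}f_i$ and $f_i=\sum_j\binom{j}{i}h_j$, with $h_j\ge0$ and $h_j=h_{d-j}$, are correct. So are the positivity of $\binom{j}{k}+\binom{d-j}{k}$ for $j\le d/2$, the mediant bound $f_i/f_k\le\max_{0\le j\le d/2}r_j$, and the identification of $r_{\lfloor d/2\rfloor}$ with the target.

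The gap is the step you flag yourself. The inequality $r_j\le r_{\lfloor d/2\rfloor}$ is the whole content of the theorem beyond these standard facts, and the proposal contains no argument for it. Your heuristic is also one-sided. Moving $j$ toward $d/2$ does shrink $\binom{d-j}{k}$ relatively more than $\binom{d-j}{i}$, which raises $r_j$. But it also enlarges $\binom{j}{k}$ relatively more than $\binom{j}{i}$, by the very monotonicity of $\binom{n}{k}/\binom{n}{i}$ in $n$ that you cite, and that lowers $r_j$. Nothing in the proposal shows the first effect wins. Your $g$-theorem fallback rests on the same unverified comparison (``I would check that the removed pair\ldots''), so it does not close the gap either, and it turns out to be unnecessary.

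One way to close it is as follows. Put $a_m(x)=\binom{x}{m}+\binom{d-x}{m}$, fix $j$ with $j+1\le\lfloor d/2\rfloor$, and set $u_s=\binom{j}{s}$, $v_s=\binom{d-j-1}{s}$, $M_{s,s'}:=u_sv_{s'}-u_{s'}v_s$. Expanding $\binom{j+1}{\cdot}$ and $\binom{d-j}{\cdot}$ by Pascal's rule and cancelling gives
\[
a_m(j+1)\,a_{m+1}(j)-a_m(j)\,a_{m+1}(j+1)=\bigl[L_{d-j-1}(m)-L_j(m)\bigr]+M_{m-1,m}+M_{m-1,m+1},
\]
where
\[
L_n(m):=\binom{n}{m}^2-\binom{n}{m-1}\binom{n}{m+1}=\frac{1}{m+1}\binom{n}{m}\binom{n+1}{m}.
\]
The three terms are nonnegative for the following reasons.
\begin{itemize}
\item $L_n(m)$ is nondecreasing in $n$, and $j<d-j-1$, so the bracket is nonnegative.
\item $M_{m-1,m}$ and $M_{m-1,m+1}$ are $2\times2$ minors of Pascal's matrix with rows $j<d-j-1$. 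They are nonnegative by exactly the monotonicity of $\binom{n}{s'}/\binom{n}{s}$ in $n$ for $s<s'$ that you invoke.
\end{itemize}
Next, $d-j-1\ge\lceil d/2\rceil\ge k$, so all $a_m(j)$ and $a_m(j+1)$ with $m\le k$ are positive; this positivity is all the hypothesis on $k$ is used for. You may therefore chain $a_m(j+1)/a_m(j)\ge a_{m+1}(j+1)/a_{m+1}(j)$ over $m=i,\dots,k-1$ to get $r_j\le r_{j+1}$. This gives the monotonicity and completes your primary argument.
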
	

\begin{proposition}\label{prop: UB for triangles}
	Let $\ell \geq 2$. Let $P$ be a simple $d$-polytope. If $P$ has no simplex $\ell$-faces, then $3\tau_2(P)\leq m_\ell(d)f_0(P)$. Consequently, for $2\leq k\leq d$, $\tau_{2k}(P) \leq \alpha_{d, \ell, k} \tau_{0k}(P).$
\end{proposition}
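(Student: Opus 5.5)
The plan is to count triangular $2$-faces locally at each vertex and then sum, exactly as in the proof of Lemma \ref{lm: simple polytope property}. Fix a vertex $v$ of the simple $d$-polytope $P$, with neighbours $u_1,\dots,u_d$. In a simple polytope every set of $2$ edges at $v$ spans a unique $2$-face. I will build a graph $G_v$ on the vertex set $\{u_1,\dots,u_d\}$ by joining $u_i$ and $u_j$ whenever the $2$-face spanned by $vu_i,vu_j$ is a triangle, i.e.\ whenever $u_iu_j$ is an edge of $P$. The number of triangular $2$-faces containing $v$ is then the number of edges of $G_v$. The whole point is to show $|E(G_v)|\le m_\ell(d)$, because then
\[
3\tau_2(P)=\sum_v |E(G_v)|\le m_\ell(d)\,f_0(P).
\]

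\textbf{Key structural step: $G_v$ is a disjoint union of cliques.} Suppose $u_iu_j$ and $u_ju_k$ are both edges of $P$. Consider the $3$-face $F$ spanned by $vu_i,vu_j,vu_k$; it is a simple $3$-polytope. Inside $F$, the $2$-faces $vu_iu_j$ and $vu_ju_k$ are triangles, and they are adjacent because they share the edge $vu_j$. Lemma \ref{lm: adjacent simplex facet} with $d=3$ then says $F$ is a $3$-simplex, so $u_iu_k$ is an edge. Hence adjacency in $G_v$ is transitive, and $G_v$ is a disjoint union of cliques.

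\textbf{Bounding the clique sizes.} Next I show that every clique $S$ of $G_v$ has $|S|\le \ell-1$. The plan is to prove by induction on $|S|$ that $v\cup S$ spans a simplex face of dimension $|S|$. For the induction step, take the face spanned by $v$ and $S$ (it has dimension $|S|$, by simplicity). Drop two different elements of $S$: by induction this gives two simplex facets of that face, and they are adjacent since they share a ridge. Lemma \ref{lm: adjacent simplex facet} then makes the whole face a simplex; for dimension $2$, the face is directly a triangle. So a clique of size $\ell$ would produce a simplex $\ell$-face, contrary to hypothesis. (Faces of a simplex are simplices, so this also excludes larger cliques.) Finally, given $d$ vertices split into cliques of size at most $\ell-1$, the number of edges is maximised by the greedy partition, by convexity of $\binom{x}{2}$: exchanging vertices from a smaller clique to a larger one never decreases the edge count. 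This gives at most $m_\ell(d)$ edges. That convexity claim is the only routine check.

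\textbf{The consequence.} By \eqref{eq: face_flag}, $\tau_{2k}=\binom{d-2}{k-2}\tau_2$ and $\tau_{0k}=\binom{d}{k}f_0$. Then $\tau_{2k}\le \binom{d-2}{k-2}m_\ell(d)f_0/3$, and the ratio
\[
\frac{\binom{d-2}{k-2}}{\binom{d}{k}}=\frac{\binom{k}{2}}{\binom{d}{2}}
\]
yields exactly $\alpha_{d,\ell,k}\tau_{0k}$.

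I expect the main obstacle to be the transitivity and induction step: one must check that the relevant faces are the ones spanned by the given edges (this uses simplicity) and that Lemma \ref{lm: adjacent simplex facet} applies, including the low-dimensional base cases.
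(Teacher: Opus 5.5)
Your proof is correct and is essentially the paper's argument. Your graph $G_v$ is the $1$-skeleton of the paper's complex $\Gamma_v$ of simplex faces at $v$; your transitivity step (Lemma~\ref{lm: adjacent simplex facet} applied to the $3$-face spanned by $vu_i,vu_j,vu_k$) is the paper's proof that maximal faces of $\Gamma_v$ are disjoint, and your induction showing that cliques span simplex faces is the paper's ``flag'' induction. You also spell out two points the paper leaves implicit: the exchange argument showing that $m_\ell(d)$ is the maximum edge count, and the fact that, by simplicity, $u_iu_j$ being an edge is equivalent to the $2$-face spanned by $vu_i,vu_j$ being a triangle.
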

\begin{proof}
	Let $v\in V(P)$, let $u_1, \dots, u_d$ be the neighbors of $v$, and for $S\subseteq [d]$, let $G_S$ be the $|S|$-face containing the vertices $\{v, u_i: i\in S\}$. Define $\Gamma_v=\{S\subseteq [d]: G_S \text{ is a simplex}\}$; since every face of a simplex is a simplex, $\Gamma_v$ is a simplicial complex on vertex set $[d]$.
	
	Let $F\in\Gamma_v$ and let $a\notin F$ be such that $\{x,a\}\in\Gamma_v$ for all $x\in F$. We prove by induction on $|F|$ that $F\cup \{a\}\in\Gamma_v$: Indeed, the case $|F|=1$ is trivial. For $|F|\ge2$ pick any vertex $b\in F$; by induction $(F\setminus b)\cup a\in\Gamma_v$. Now $G_{F\cup a}$ is a simple polytope of dimension $\ge3$ where $G_F$ and $G_{(F\setminus b)\cup a}$ are adjacent simplex facets. By Lemma~\ref{lm: adjacent simplex facet}, $F\cup \{a\}\in\Gamma_v$ .

If $F, F'$ are two maximal faces of $\Gamma_v$ with $x\in F\cap F'$, then for every $y\in F$ and $z\in F'\backslash F$, we claim $\{y,z\}\in\Gamma_v$. If $y=x$, this is immediate since $x,z\in F'\in\Gamma_v$ and $\Gamma_v$ is downward closed; so assume $y\neq x$. In this case $G_{\{x,y\}}$ and $G_{\{x,z\}}$ are adjacent triangular facets of the simple $3$-polytope $G_{\{x,y,z\}}$, so $\{y,z\}\in\Gamma_v$ by Lemma~\ref{lm: adjacent simplex facet}.
    
	% I THINK THIS WAS INCOMPLETE
	% If $F, F'$ are two maximal faces of $\Gamma_v$ with $x\in F\cap F'$, then for every $y\in F$ and $z\in F'\backslash F$, the faces $G_{\{x,y\}}$ and $G_{\{x,z\}}$ are adjacent triangular facets of the simple $3$-polytope $G_{\{x,y,z\}}$, so $\{y,z\}\in\Gamma_v$ by Lemma~\ref{lm: adjacent simplex facet}. 
    
    The above argument implies that $F\cup z\in\Gamma_v$, contradicting that $F$ is a maximal face. Hence, the maximal faces of $\Gamma_v$ are pairwise disjoint. Since $P$ has no simplex $\ell$-faces, every face of $\Gamma_v$ has at most $\ell-1$ vertices. Thus, $f_1(\Gamma_v)\leq m_\ell(d)$ by definition of $m_\ell(d)$. By double counting the triangular 2-faces,
	$$3\tau_2(P)=\sum_{v\in V(P)}\# \{\text{triangular 2-faces containing $v$}\}=\sum_{v\in V(P)} f_1(\Gamma_v)\leq m_\ell(d)f_0(P).$$ This proves the first part. The second part follows directly from (\ref{eq: face_flag}) and simplifying the expression.
\end{proof}

Now we are ready to prove that $f_s(\ell, k)$ is bounded.
\begin{theorem}\label{thm: bounded f_s}
	Let $\ell\geq 2, k\geq 3$, and $d\geq \max\{2k,\ell\}$. Let $P$ be a simple $d$-polytope with no simplex $\ell$-face. If we denote by $c_k(P)$ the number of cube $k$-faces of $P$, then
	$$U_{k-1}(d, k)+\alpha_{d, \ell, k}U_0(d, k)\geq 2k+1-c_k(P)/f_k(P).$$
	Consequently, if $U_{k-1}(d, k)+\alpha_{d, \ell, k}U_0(d, k)< 2k+1$, then every simple polytope of dimension $\geq d$ has either a simplex $\ell$-face or a cube $k$-face.
\end{theorem}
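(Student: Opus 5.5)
We will double count the pairs (facet of a $k$-face, $k$-face), i.e.\ the flag number $f_{k-1,k}(P)$, and sandwich the average $f_{k-1,k}(P)/f_k(P)$ between Nikulin's upper bound and a lower bound coming from Lemma~\ref{lm: simple polytope property}. Since $d\ge 2k$, we have $k\le\lfloor (d+1)/2\rfloor$, so Theorem~\ref{thm: Nikulin} applies for $i=k-1$ and for $i=0$. This gives
\[
f_{k-1,k}(P)\le U_{k-1}(d,k)f_k(P)\quad\text{and}\quad f_{0k}(P)\le U_0(d,k)f_k(P).
\]

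For the lower bound, every $k$-face $F$ of $P$ is itself a simple $k$-polytope, since faces of simple polytopes are simple. If $F$ is not a cube, Lemma~\ref{lm: simple polytope property} (valid as $k\ge3$) gives $f_{k-1}(F)+\tau_2(F)\ge 2k+1$. If $F$ is a cube, then $f_{k-1}(F)=2k$ and $\tau_2(F)=0$, so $f_{k-1}(F)+\tau_2(F)\ge 2k+1-1$. Summing over all $F\in\mathcal{F}_k(P)$ yields
\[
f_{k-1,k}(P)+\tau_{2k}(P)\ \ge\ (2k+1)f_k(P)-c_k(P).
\]

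Next we bound the triangle term by Proposition~\ref{prop: UB for triangles}. This applies because $P$ has no simplex $\ell$-face (the requirement $d\ge\ell$ ensures the hypotheses make sense, and $2\le k\le d$ holds). It gives $\tau_{2k}(P)\le\alpha_{d,\ell,k}\tau_{0k}(P)$. Since $\tau_0=f_0$, we have $\tau_{0k}(P)=f_{0k}(P)\le U_0(d,k)f_k(P)$. Combining the three displays and dividing by $f_k(P)>0$ gives
\[
U_{k-1}(d,k)+\alpha_{d,\ell,k}U_0(d,k)\ \ge\ 2k+1-c_k(P)/f_k(P),
\]
which is the claimed inequality.

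For the consequence, suppose $U_{k-1}(d,k)+\alpha_{d,\ell,k}U_0(d,k)<2k+1$ and let $P$ be a simple polytope of dimension exactly $d$ with no simplex $\ell$-face. The inequality then forces $c_k(P)>0$, so $P$ has a cube $k$-face. Extending this to all dimensions $d'\ge d$ is the main obstacle. One route is to check that the left-hand side is nonincreasing in $d$, since $U_i(d,k)$ and $\alpha_{d,\ell,k}$ are explicit; $\alpha$ decays roughly like $m_\ell(d)/d^2\approx(\ell-2)/(2d)$. A monotonicity check could be delicate because of the floor and ceiling terms and the piecewise form of $m_\ell$. The cleaner route I would try first is the standard reduction: a simple $d'$-polytope with $d'>d$ has a $d$-face $Q$, which is a simple $d$-polytope. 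Any simplex $\ell$-face or cube $k$-face of $Q$ is also a face of $P$. Applying the dimension-$d$ statement to $Q$ then gives the result for $P$ without any monotonicity computation.
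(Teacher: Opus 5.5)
Your proposal is correct and follows the paper's proof essentially step for step. The paper also sandwiches $f_{k-1,k}(P)$ between Nikulin's bound and the per-face bound from Lemma~\ref{lm: simple polytope property}, with cubes contributing exactly $2k$ facets and no triangles. It then controls $\tau_{2k}(P)$ via Proposition~\ref{prop: UB for triangles} together with Nikulin at $i=0$, and handles dimensions above $d$ by passing to a $d$-face, exactly as you do.
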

\begin{proof}
	We estimate $f_{k-1, k}(P)=\sum_{\dim F=k} f_{k-1}(F)$ by counting the facets of two types of $k$-faces: A cube $k$-face has exactly $2k$ facets. The other faces satisfy the condition in Lemma \ref{lm: simple polytope property}. Since the cube $C^k$ has no triangular $2$-faces, $\tau_2(F)=0$ whenever $F\cong C^k$, so
$$\sum_{F\ncong C^k}\tau_2(F)=\sum_{\dim F=k}\tau_2(F)=\tau_{2k}(P).$$
Using this identity, together with Theorem \ref{thm: Nikulin} and Lemma \ref{lm: simple polytope property}
\begin{equation*}
	\begin{split}
		U_{k-1}(d, k)f_k(P)\geq f_{k-1,k}(P)&=\sum_{\dim F=k}f_{k-1}(F)\ =\ 2k\,c_k(P)+\sum_{F\ncong C^k}f_{k-1}(F)\\
		&\geq 2k\,c_k(P)+\sum_{F\ncong C^k}\bigl(2k+1-\tau_2(F)\bigr)\\
		&= 2k\,c_k(P)+(2k+1)\bigl(f_k(P)-c_k(P)\bigr)-\sum_{F\ncong C^k}\tau_2(F)\\
		&= 2k\,c_k(P)+(2k+1)\bigl(f_k(P)-c_k(P)\bigr)-\tau_{2k}(P)\\
		&= (2k+1)f_k(P)-c_k(P)-\tau_{2k}(P). 
	\end{split}
\end{equation*}

	On the other hand, Proposition \ref{prop: UB for triangles} and Theorem \ref{thm: Nikulin} imply that $$\tau_{2k}(P)\leq \alpha_{d, \ell, k}\tau_{0k}(P)\leq \alpha_{d, \ell, k}U_0(d, k)f_k(P).$$
	Combining the two inequalities, we get
	$$U_{k-1}(d, k)f_k(P)\geq (2k+1)f_k(P)-c_k(P)-\alpha_{d, \ell, k}U_0(d, k)f_k(P),$$
	$$\text{i.e.}, \quad U_{k-1}(d, k)+\alpha_{d, \ell, k}U_0(d, k)\geq 2k+1-c_k(P)/f_k(P).$$ 
	
	For the last part, assume that $P$ is a simple $d$-polytope with no simplex $\ell$-face and furthermore, $U_{k-1}(d, k)+\alpha_{d, \ell, k}U_0(d, k)< 2k+1$. Then by the proved inequality, $c_k(P)>0$, i.e., $P$ must have a cube $k$-face. For simple polytopes of dimension $>d$, we take any $d$-face (which is a simple $d$-polytope) and apply the same argument.
\end{proof}

\begin{corollary}\label{cor1: f_s(2,k)}
	For $k\geq 3$, $f_s(2, k)\leq 2k^2-1$.
\end{corollary}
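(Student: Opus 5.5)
We apply Theorem~\ref{thm: bounded f_s} with $\ell=2$ and a single well-chosen dimension $d=2k^2-1$. Since $m_2(d)=0$, we have $\alpha_{d,2,k}=0$, so the error term coming from triangles disappears. It then suffices to verify the strict inequality
\[
U_{k-1}(2k^2-1,k)<2k+1 .
\]
The last part of Theorem~\ref{thm: bounded f_s} then gives that every simple polytope of dimension $\ge 2k^2-1$ has a simplex $2$-face (a triangle) or a cube $k$-face, i.e.\ $f_s(2,k)\le 2k^2-1$. The side hypotheses are immediate for $k\ge3$. We have $d=2k^2-1\ge\max\{2k,2\}$, and Nikulin's condition $k\le\lfloor (d+1)/2\rfloor=k^2$ holds.

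To evaluate $U_{k-1}$, write $a=\lfloor d/2\rfloor=k^2-1$ and $b=\lceil d/2\rceil=k^2$, and note that $d-k+1=2k^2-k$. We use the identity
\[
\binom{n}{k-1}=\frac{k}{n-k+1}\binom{n}{k}.
\]
This expresses the ratio $\bigl(\binom{a}{k-1}+\binom{b}{k-1}\bigr)/\bigl(\binom{a}{k}+\binom{b}{k}\bigr)$ as a weighted average of $k/x$ and $k/(x+1)$, where $x=a-k+1=k^2-k$. The weights are $\binom{a}{k}$ and $\binom{b}{k}$, and their ratio is $w=\binom{b}{k}/\binom{a}{k}=b/(b-k)=k/(k-1)$.

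The crude bound, replacing the average by its maximum $k/x=1/(k-1)$, only yields
\[
U_{k-1}\le \frac{2k^2-k}{k-1}=2k+1+\frac{1}{k-1},
\]
which just misses. This is the main obstacle: we must exploit that the larger weight sits on the smaller term $k/(x+1)$. Concretely, we must check
\[
(2x+k)\,k\left(\frac1x+\frac{w}{x+1}\right)<(2k+1)(1+w),
\qquad x=k^2-k,\quad w=\frac{k}{k-1}.
\]
I would clear denominators, multiplying by $(k-1)\,x(x+1)$, and reduce this to a polynomial inequality in $k$. I expect the $1/(k-1)$ excess to be beaten by a gain of order $k\cdot\frac{w}{1+w}\cdot\frac{1}{x(x+1)}\cdot(2x+k)\approx 1/k$. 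That gain is larger than the excess's deviation once the exact terms are tracked, so the resulting polynomial should have positive leading coefficient and be verifiable directly for all $k\ge3$, possibly with a separate check at $k=3$.

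If this exact computation turned out to fail at $d=2k^2-1$, the fallback would be to use the odd-dimension structure differently. Alternatively, one could compare with Kalai's $d=2k^2$ and check that the slack there is at least one unit of dimension. But I expect the direct algebraic verification above to go through.
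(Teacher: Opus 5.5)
Your approach is the same as the paper's: take $\ell=2$ so that $\alpha_{d,2,k}=0$, then verify $U_{k-1}(2k^2-1,k)<2k+1$. The computation you left as an expectation does go through: multiplying by $(k-1)x(x+1)$ gives $\mathrm{RHS}-\mathrm{LHS}=k(k-1)(2k-1)>0$, i.e.\ $U_{k-1}(2k^2-1,k)=2k+1-\frac{1}{k^2-k+1}$. Note that the margin is second order, since your gain term equals $\frac{k^2}{(k-1)(k^2-k+1)}$; its ``$\approx 1/k$'' approximation would on its own lose to the excess $\frac{1}{k-1}$. The paper reaches the same value more directly. It uses $\binom{m-1}{j}=\frac{m-j}{m}\binom{m}{j}$ to get the closed form $U_{k-1}(2m-1,k)=\frac{k(2m-k+1)}{m-k+1}$, which coincides with the even case $d=2m$, and this yields the exact criterion $m\ge k^2$.
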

\begin{proof}
		Since $m_2(d)=0$, we have $\alpha_{d,2,k}=0$ and the criterion of Theorem~\ref{thm: bounded f_s} is equivalent to $U_{k-1}(d,k)<2k+1$. When $d=2m$, $$U_{k-1}(2m, k)=(2m-k+1)\frac{\binom{m}{k-1}}{\binom{m}{k}}=\frac{k(2m-k+1)}{m-k+1}.$$  Similarly, when $d=2m-1$, $$U_{k-1}(2m-1, k)=(2m-k)\frac{\binom{m-1}{k-1}+\binom{m}{k-1}}{\binom{m-1}{k}+\binom{m}{k}}=(2m-k)\frac{\frac{2m-k+1}{m}\binom{m}{k-1}}{\frac{2m-k}{m}\binom{m}{k}}=\frac{k(2m-k+1)}{m-k+1}.$$
		Hence $U_{k-1}(d,k)<2k+1$ if and only if $k(2m-k+1)<(2k+1)(m-k+1)$, i.e.\ $k^2<m+1$. Since $m=\lceil d/2\rceil$, this leads to $d\geq 2k^2-1$.
\end{proof}

\begin{corollary}\label{cor2: f_s(l,k)}
For $\ell\ge3$ and $k\ge3$,
\[
f_s(\ell,k)\ \le\ 4k^{2}+\tfrac13\,k(k-1)(\ell-2)\,2^{k}.
\]
In particular, $f_s(\ell, k)\le\ \frac{1}{2}k^{2}\ell\,2^{k}$.
\end{corollary}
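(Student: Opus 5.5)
We apply the criterion of Theorem~\ref{thm: bounded f_s}: it suffices to exhibit an explicit $d_0\le 4k^2+\tfrac13k(k-1)(\ell-2)2^k$ with $d_0\ge\max\{2k,\ell\}$ such that
\[
U_{k-1}(d_0,k)+\alpha_{d_0,\ell,k}\,U_0(d_0,k)<2k+1 .
\]
Then every simple polytope of dimension at least $d_0$ has a simplex $\ell$-face or a cube $k$-face, by the final clause of that theorem. The two terms are handled separately. The first is controlled by the computation in the proof of Corollary~\ref{cor1: f_s(2,k)}: $U_{k-1}(d,k)=\frac{k(2m-k+1)}{m-k+1}$ with $m=\lceil d/2\rceil$. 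Rewriting,
\[
U_{k-1}(d,k)=2k+1-\frac{m+1-k^2}{m-k+1},
\]
so the slack available is $\delta(m):=\frac{m+1-k^2}{m-k+1}$. This is positive once $m\ge k^2$, and close to $1$ once $m$ is of order $2k^2$.

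Next we bound the error term. From $m_\ell(d)=a\binom{\ell-1}{2}+\binom r2$ with $d=a(\ell-1)+r$, we get the crude bound $m_\ell(d)\le \frac{d(\ell-2)}{2}$, since each vertex has degree at most $\ell-2$ in the clique union. Hence
\[
\alpha_{d,\ell,k}\le \frac{\binom k2\, d(\ell-2)/2}{3\binom d2}=\frac{k(k-1)(\ell-2)}{6(d-1)}.
\]
For $U_0(d,k)=\binom dk\big/\bigl(\binom{\lfloor d/2\rfloor}{k}+\binom{\lceil d/2\rceil}{k}\bigr)$, we use the elementary estimate $\binom{2m}{k}\big/\binom mk=\prod_{j=0}^{k-1}\frac{2m-j}{m-j}$. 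Each factor equals $2+\frac{j}{m-j}$, so the whole product is at most $2^k\exp\bigl(\sum_j \frac{j}{2(m-j)}\bigr)$, which is $2^k(1+O(k^2/m))$. With the denominator containing two binomials, $U_0(d,k)\lesssim 2^{k-1}\cdot C$ for a constant $C$ close to $1$ when $m\gtrsim 2k^2$. The odd case $d=2m-1$ is handled the same way, or reduced to the even case by monotonicity.

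Combining these, the criterion holds as soon as
\[
\frac{k(k-1)(\ell-2)}{6(d-1)}\,2^{k-1}C<\delta(m).
\]
Choose $d$ so that $m\ge 2k^2$, which gives $\delta\ge\tfrac12$ roughly, and also $d-1\ge \tfrac13 k(k-1)(\ell-2)2^{k}\cdot C/ (2\delta)$. Taking $d_0=4k^2+\tfrac13k(k-1)(\ell-2)2^k$ should satisfy both requirements simultaneously. The first summand $4k^2$ ensures $\delta(m)$ is bounded below and $C$ is close to $1$. The second summand dominates the error term. For the "in particular" statement, compare $4k^2\le \tfrac16k^2\ell 2^k$ and $\tfrac13k(k-1)(\ell-2)2^k\le \tfrac13k^2\ell2^k$ for $\ell,k\ge3$, since $2^k\ge8$ and $\ell\ge3$; summing gives $\tfrac12k^2\ell2^k$.

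The main obstacle is bookkeeping the constants. The factor $C$ in the $U_0$ bound and the slack $\delta(m)$ must combine cleanly into the stated coefficient $\tfrac13$. If the naive split fails, a better approach is to keep the slack exact: the condition is $\alpha U_0<\frac{m+1-k^2}{m-k+1}$. One then bounds $U_0(d,k)\le 2^{k-1}\prod_j\frac{2m-j}{2(m-j)}$ and checks the product inequality directly at $d=d_0$. This uses the fact that the left side decreases in $d$ while the right side increases in $d$, so verifying at $d_0$ suffices.
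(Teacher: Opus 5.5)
Your route is the paper's: apply Theorem~\ref{thm: bounded f_s} at an even dimension, reuse $U_{k-1}(2m,k)=2k+\frac{k(k-1)}{m-k+1}$, bound $m_\ell(d)\le\frac{(\ell-2)d}{2}$ to get $\alpha_{d,\ell,k}\le\frac{k(k-1)(\ell-2)}{6(d-1)}$, and control $U_0$ by the product-and-exponential estimate. Your ``in particular'' comparison is identical to the paper's. However, the proposal stops exactly where the corollary's content lies, and the one estimate you do state is wrong.

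For $i=0$ the numerator of $U_0$ is $\binom{\lfloor d/2\rfloor}{0}+\binom{\lceil d/2\rceil}{0}=2$. This cancels the two binomials in the denominator, so $U_0(2m,k)=\binom{2m}{k}/\binom{m}{k}=\prod_{i<k}\frac{2m-i}{m-i}\ge 2^k$, not $\approx 2^{k-1}$. As a sanity check, every $k$-face of the $d$-cube has $2^k$ vertices, so Nikulin's average bound cannot be below $2^k$. With the correct value, the error term $\alpha U_0$ is about $C/2$ with $C>1$, and your naive split does not close. The condition $m\ge 2k^2$ alone gives only $\delta(m)>\frac{1}{2}$, and at $m=2k^2$ one has $\delta=\frac{k^2+1}{2k^2-k+1}\to\frac{1}{2}$ (so $\delta$ is not ``close to $1$'' at that scale). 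Your second requirement then reads roughly $d-1\ge C\cdot\frac{1}{3}k(k-1)(\ell-2)2^k$, which $d_0$ does not meet for large $\ell$ if $C$ is a fixed constant above $1$. ``Should satisfy both requirements'' is therefore not a proof. The fallback of checking exactly at $d_0$ is never carried out, and $d_0$ need not even be an integer.

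The missing step is that the second summand also enlarges $m$, and this raises the slack. Take $m=2k^2+\lfloor\frac{1}{6}k(k-1)(\ell-2)2^k\rfloor$ and $d=2m\le 4k^2+\frac{1}{3}k(k-1)(\ell-2)2^k$.
\begin{itemize}
\item Since $\ell,k\ge 3$, we have $m\ge 2k^2+\frac{4}{3}k(k-1)$. Hence $\frac{k(k-1)}{m-k+1}\le\frac{k^2-k}{\frac{10}{3}k^2-\frac{7}{3}k+1}<\frac{3}{10}$, that is, $U_{k-1}(d,k)<2k+\frac{3}{10}$.
\item Separately, $m\ge 2k^2$ gives $\sum_{i<k}\frac{i}{2(m-i)}\le\frac{k(k-1)}{4(m-k+1)}<\frac{1}{8}$, so $U_0(d,k)\le 2^k e^{1/8}$.
\item Since $2m-1\ge\frac{1}{3}k(k-1)(\ell-2)2^k$, it follows that $\alpha_{d,\ell,k}U_0(d,k)\le\frac{e^{1/8}}{2}<\frac{4}{7}$.
\end{itemize}
Because $\frac{3}{10}+\frac{4}{7}<1$, the criterion of Theorem~\ref{thm: bounded f_s} holds at this single $d$. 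No monotonicity in $d$ and no treatment of odd $d$ is needed.
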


\begin{proof}
Set $m:=2k^2+\left\lfloor
\tfrac16 k(k-1)(\ell-2)2^k
\right\rfloor$ and $d:=2m$.
Since $k,\ell\geq 3$, $m\geq 2k^2+\frac{4}{3}k(k-1)$. Thus it follows from the computation in Corollary \ref{cor1: f_s(2,k)} that
\[
U_{k-1}(2m,k)=\frac{k(2m-k+1)}{m-k+1}=2k+\frac{k(k-1)}{m-k+1} \leq 2k+\frac{k^2-k}{\frac{10}{3}k^2-\frac{7}{3}k+1}<2k+\frac{k^2-k}{\frac{10}{3}k^2-\frac{10}{3}k}=2k+\frac{3}{10}.
\]

On the other hand,
\begin{equation*}
	\begin{split}
		U_0(2m,k)&=\prod_{i=0}^{k-1}\frac{2m-i}{m-i}=2^{k}\prod_{i=0}^{k-1}\Big(1+\frac{i}{2(m-i)}\Big)\\
		& \le\ 2^{k}\exp\left(\sum_{i=0}^{k-1}\frac{i}{2(m-i)}\right)
		\ <\ 2^{k}\exp\left(\frac{1}{m-k+1}\sum_{i=0}^{k-1}\frac{i}{2}\right)\\
		& \le\ 2^k\exp\left(\frac{k(k-1)}{4(m-k+1)}\right)\leq 2^k\cdot e^{1/8},
	\end{split}
\end{equation*}
where the last step is because $m\ge2k^{2}$ gives $\frac{k(k-1)}{4(m-k+1)}\le\frac{k(k-1)}{4(2k^{2}-k+1)}<\frac18$. 

Writing $d=a(\ell-1)+r$ with $0\leq r<\ell-1$, the bound $\binom{r}{2}\leq\frac{(\ell-2)r}{2}$ gives $m_\ell(d)\leq \frac{(\ell-2)d}{2}$, whence
$$\alpha_{d,\ell,k}=\frac{\binom{k}{2}m_\ell(d)}{3\binom{d}{2}}\leq \frac{k(k-1)(\ell-2)}{6(d-1)}=\frac{k(k-1)(\ell-2)}{6(2m-1)}.$$
Since $2m-1\geq \frac{1}{3}k(k-1)(\ell-2)2^k$, it follows that $$\alpha_{d,\ell,k}U_0(2m, k)\leq \frac{k(k-1)(\ell-2)2^k}{6(2m-1)}\cdot e^{1/8} \leq \frac{e^{1/8}}{2}<\frac{4}{7}.$$
Putting everything together,
$$U_{k-1}(d,k)+\alpha_{d,\ell,k}U_0(d,k)<2k+\tfrac3{10}+\tfrac47<2k+1.$$

So by Theorem \ref{thm: bounded f_s}, $f_s(\ell,k)\le d=2m$. Finally, by using $k, \ell\geq 3$, we obtain that $4k^{2}\le\tfrac16k^{2}\ell2^{k}$ and
$\tfrac13k(k-1)(\ell-2)2^{k}\le\tfrac13k^{2}\ell2^{k}$, and thus $d\le \frac{1}{2}k^{2}\ell2^{k}$.
\end{proof}
%\begin{corollary} For $\ell\geq 2$ and $k\geq 3$, $$f_s(\ell,k)\ \leq\ 4k^2+2k(k-1)(\ell-2)\,3^{k-1}.$$ In particular, $f_s(\ell,k)\leq 4k^2\ell\cdot 3^{k-1}$.
%\end{corollary}
%\begin{proof}
%	Set $m=2k^2+k(k-1)(\ell-2)3^{k-1}$ and $d=2m$. Since $m\geq 2k^2$, direct computation gives $m-k+1>2k(k-1)$, so $$U_{k-1}(d,k)=2k+\frac{k(k-1)}{m-k+1}<2k+\tfrac12.$$ Likewise $m\geq 2k^2$ ensures $i\leq k-1\leq m/2$ for all $0\leq i\leq k-1$, so each factor of $U_0(d,k)=\prod_{i=0}^{k-1}\bigl(2+\tfrac{i}{m-i}\bigr)$ is at most $3$, giving $U_0(d,k)\leq 3^k$.
	
%	Writing $d=a(\ell-1)+r$ with $0\leq r<\ell-1$, the bound $\binom{r}{2}\leq\frac{(\ell-2)r}{2}$ gives $m_\ell(d)\leq \frac{(\ell-2)d}{2}$, whence $$\alpha_{d,\ell,k}=\frac{\binom{k}{2}m_\ell(d)}{3\binom{d}{2}}\leq \frac{k(k-1)(\ell-2)}{6(d-1)}=\frac{k(k-1)(\ell-2)}{6(2m-1)}.$$ By the choice of $m$, $2m-1>k(k-1)(\ell-2)3^{k-1}$, so $\alpha_{d,\ell,k}U_0(d,k)<\tfrac12$. Combining, $$U_{k-1}(d,k)+\alpha_{d,\ell,k}U_0(d,k)\ <\ \left(2k+\tfrac12\right)+\tfrac12\ =\ 2k+1,$$ so by Theorem~\ref{thm: bounded f_s}, $f_s(\ell,k)\leq d=2m$. The stated form follows directly. The "in particular" bound follows from $(k-1)(\ell-2)\leq k\ell$ and $4k^2\leq 2k^2\ell\cdot 3^{k-1}$ (as $\ell\geq2,\,3^{k-1}\geq1$).
%\end{proof}

\begin{remark}
	  The upper bound of $f_s(\ell, k)$ given by Theorem \ref{thm: bounded f_s} is usually not optimal. In the case $\ell \geq 3$ and $k=4$, the inequality in Lemma~\ref{lm: simple polytope property} can be strengthened to $f_3(P)\geq 9-\tau_2(P)/2$. Indeed, by Blind--Blind only the range $5\leq f_3(P)\leq 8$ has to be checked, and the classification of simple $4$-polytopes with at most $8$ facets shows that $\tau_2(P)\geq 6$ when $f_3(P)\leq 6$, and $\tau_2(P)\geq 4$ when $f_3(P)\in\{7,8\}$; equality holds for $\sigma^2\times\sigma^2$ and $\sigma^2\times C^2$ \cite{GrunbaumSreedharan, Grunbaum}. Applying the argument of Theorem~\ref{thm: bounded f_s} with this inequality, a simple polytope of dimension $\geq d$ has a simplex $\ell$-face or a cube $4$-face provided that $U_3(d,4)+\frac12\alpha_{d,\ell,4}U_0(d,4)<9$. For $\ell=3$ the criterion improves $f_s(3,4)\leq 63$ to $f_s(3,4)\leq 47$.
\end{remark}
Finally, Theorem~\ref{thm: bounded f_s} says nothing about $f_s(\ell,2)$. In this case, the boundedness of $f_s(\ell, 2)$ follows from the monotonicity of $f_s(\ell,k)$ in $k$.
\begin{proposition}
	For all $\ell\geq 2$ and $k\geq 3$ we have $f_s(\ell,k)\geq f_s(\ell,k-1)+1$.
\end{proposition}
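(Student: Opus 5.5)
The plan is to build, from a simple polytope witnessing the lower bound for $(\ell,k-1)$, a simple polytope of one higher dimension that witnesses the bound for $(\ell,k)$. Set $d:=f_s(\ell,k-1)-1$. We may assume $f_s(\ell,k)<\infty$, since otherwise there is nothing to prove; the case $f_s(\ell,k-1)=\infty$ is handled the same way, using arbitrarily large $d$. By minimality of $f_s(\ell,k-1)$ there is a simple polytope $P$ of dimension $d$ with no simplex $\ell$-face and no cube $(k-1)$-face. Strictly, minimality only gives such a $P$ of some dimension $\ge d$, but any $d$-face of it inherits both properties, so we may take $\dim P=d$.

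Consider the prism $Q=P\times I$, where $I$ is a segment. It is a simple $(d+1)$-polytope, being a product of simple polytopes. Its faces are the products $F\times\{0\}$, $F\times\{1\}$ and $F\times I$, where $F$ ranges over the nonempty faces of $P$.

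We check the two exclusions in turn.
\begin{itemize}
\item \emph{No simplex $\ell$-face.} A face $F\times\{e\}$ is a copy of $F$, so it is not a simplex $\ell$-face. A face $F\times I$ with $\dim F\ge1$ is a prism over a polytope, hence not a simplex, since a simplex has no two disjoint facets with the required product structure. If $\dim F=0$, then $F\times I$ is an edge, of dimension $1<\ell$. This uses $\ell\ge2$.
\item \emph{No cube $k$-face.} A face $F\times\{e\}$ of dimension $k$ is a copy of a $k$-face of $P$; that face contains $(k-1)$-faces that are not cubes, because $P$ has no cube $(k-1)$-face and $k-1\ge 1$. So $F\times\{e\}$ is not a cube. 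A face $F\times I$ of dimension $k$ has $\dim F=k-1$, and $F$ is not a $(k-1)$-cube. Its facet $F\times\{0\}$ is then a non-cube, whereas every facet of a cube is a cube, so $F\times I$ is not a cube.
\end{itemize}

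Hence $Q$ is a simple $(d+1)$-polytope with neither forbidden face, which gives $f_s(\ell,k)>d+1=f_s(\ell,k-1)$, that is, $f_s(\ell,k)\ge f_s(\ell,k-1)+1$.

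The only point needing care is the combinatorial claim that $F\times I$ is a cube exactly when $F$ is a cube, and the similar claim for simplices. Both follow from the facts that a face of a cube is a cube and that a prism over a polytope of dimension $\ge1$ is never a simplex (it has more than $\dim+1$ vertices). I expect no real obstacle. The main subtlety is bookkeeping: when $f_s(\ell,k-1)$ might be infinite, the same argument shows that $f_s(\ell,k)$ is infinite too, which is consistent with the statement.
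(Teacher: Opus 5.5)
Your proposal is correct and is essentially the paper's proof: take a simple witness $P$ of dimension $f_s(\ell,k-1)-1$ with no simplex $\ell$-face and no cube $(k-1)$-face, and pass to the prism $P\times\sigma^1$. The only difference is that the paper cites its product lemma for $s(P\times\sigma^1)=\max\{s(P),1\}$ and $c(P\times\sigma^1)=c(P)+1$, whereas you verify the simplex and cube exclusions face by face.
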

\begin{proof}
	Let $d=f_s(\ell,k-1)$. By minimality of $d$, there is a simple polytope $P$ of dimension exactly $d-1$ with no simplex $\ell$-face and no cube $(k-1)$-face. By Lemma~\ref{lem:product}, $P\times\sigma^1$ is a simple $d$-polytope with $s(P\times\sigma^1)=\max\{s(P),1\}\leq \ell-1$ and $c(P\times\sigma^1)=c(P)+1\leq k-1$, hence $f_s(\ell,k)\geq d+1$.
\end{proof}

\section{Lower bounds on $f(\ell, k)$ and $f_s(\ell, k)$: Proof of Theorem B} \label{sec:lowerbound}

	 Our starting point is a remark of Kalai \cite[Chapter 19]{Kal17}, which we quote with the notation adapted to ours: ``Julian Pfeifle showed on the basis of the Wythoff construction (see Chapter 18), that $f(k, k) > (2k - 1)(k - 1)+1$, for $k \geq 3$.'' Our constructions are inspired by this observation, but yield stronger bounds and cover the full two-parameter range; see Theorem \ref{thm:general} and Theorem \ref{thm:simple}. Our arguments use three ingredients: two basic operations that build higher-dimensional polytopes out of lower-dimensional ones, the product and the join operations, together with Wythoff's construction. The definitions and the properties of the product and the join are given below; see also \cite{HRZ}. Wythoff's construction is recalled in Remark \ref{rm:wythoff}.

	\begin{definition}
		Let $P_1\subseteq\R^{d_1}$ be a $d_1$-polytope and $P_2\subseteq\R^{d_2}$ a $d_2$-polytope. The product of $P_1$ and $P_2$ is defined as $P_1\times P_2:=\{(x,y): x\in P_1,\ y\in P_2\}\subseteq \R^{d_1+d_2}$.
	\end{definition}
	\begin{lemma}\label{lem:product}
		Let $P_1, P_2$ be polytopes and let $P=P_1\times P_2$.
		\begin{enumerate}
			\item $\dim P=\dim P_1+\dim P_2$. Every nonempty face of $P$ is of the form $F_1\times F_2$, where each $F_i$ is a nonempty face of $P_i$.
			\item $P$ is simple whenever $P_1$ and $P_2$ are.
			\item $s(P)=\max\{s(P_1), s(P_2)\}$ and $c(P)= c(P_1)+c(P_2)$.
		\end{enumerate}
	\end{lemma}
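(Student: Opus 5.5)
The plan is to take the three parts in order. The first is the standard description of faces of a product, and the second and third follow from it.

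For part 1, I would write $P_i=\{x: A_ix\le b_i\}$ as irredundant inequality systems. Then $P=\{(x,y): A_1x\le b_1,\ A_2y\le b_2\}$, and $\dim P=d_1+d_2$ because $P$ contains the product of relative-interior balls. Let $F$ be a nonempty face of $P$, cut out by a supporting linear functional $(c_1,c_2)$. Since $\max_P (c_1x+c_2y)=\max_{P_1}c_1x+\max_{P_2}c_2y$ and the maximizers split coordinatewise, $F=F_1\times F_2$, where $F_i$ is the face of $P_i$ maximizing $c_i$. Conversely, if each $F_i$ is a nonempty face, summing supporting functionals shows that $F_1\times F_2$ is a face. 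In particular, the facets of $P$ are exactly the sets $G_1\times P_2$ and $P_1\times G_2$ with $G_i$ a facet of $P_i$.

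For part 2, I would use this facet description. A vertex $(v_1,v_2)$ lies on the facets $G_1\times P_2$ with $v_1\in G_1$, and on the facets $P_1\times G_2$ with $v_2\in G_2$. That gives $d_1+d_2$ facets when each $P_i$ is simple. Hence $P$ is simple.

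For part 3, I would first note that a product $F_1\times F_2$ is combinatorially a simplex only when one factor is a point. A simplex has the property that every two vertices are adjacent. But if $\dim F_1,\dim F_2\ge1$, take edges $a_1b_1\subseteq F_1$ and $a_2b_2\subseteq F_2$. The face $a_1b_1\times a_2b_2$ is a quadrilateral, and in it $(a_1,a_2)$ and $(b_1,b_2)$ are not adjacent. So the simplex faces of $P$ are $F_1\times\{v\}$ and $\{v\}\times F_2$ with $F_i$ a simplex face, and therefore $s(P)=\max\{s(P_1),s(P_2)\}$.

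For cubes, $C^a\times C^b\cong C^{a+b}$ gives $c(P)\ge c(P_1)+c(P_2)$. The reverse inequality is the main obstacle. It requires showing that if $F_1\times F_2$ is combinatorially a cube, then each $F_i$ is a cube. I would use the characterization that in a cube every vertex figure is a simplex and every $2$-face is a quadrilateral. Each factor inherits these properties: a $2$-face of $F_1$ is a $2$-face of $F_1\times\{v\}$, and a vertex figure of the product is the join of the factors' vertex figures. I would then appeal to the fact that a simple polytope whose $2$-faces are all quadrilaterals is a cube. This follows from Theorem~\ref{thm: Blind-Blind}, since such a polytope has no triangles and has $f_0=2^{\dim}$ by a count along the graph. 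As a cleaner alternative, I would compare face lattices directly: the number of facets is $2(a+b)$ for $C^{a+b}$ and also equals $f_{a-1}(F_1)+f_{b-1}(F_2)$, and Theorem~\ref{thm: Blind-Blind} gives $f_{a-1}(F_1)\ge 2a$ and $f_{b-1}(F_2)\ge 2b$. Equality is then forced in both, so each factor is a cube.
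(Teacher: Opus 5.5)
The paper gives no proof of this lemma. It states it as a standard fact about products, pointing to \cite{HRZ}, and uses it directly in Theorems~\ref{thm:general} and~\ref{thm:simple}. Your argument is correct and supplies that justification.

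Parts 1 and 2 are the standard supporting-functional description of faces and the facet count at a vertex. In part 3, the step that actually closes the cube direction is your second (``cleaner'') route. The $2$-faces of $F_i$ are $2$-faces of $F_1\times F_2\cong C^{a+b}$, so they are squares. Then Theorem~\ref{thm: Blind-Blind} gives $f_{a-1}(F_1)\ge 2a$ and $f_{b-1}(F_2)\ge 2b$. The facet count $f_{a-1}(F_1)+f_{b-1}(F_2)=2(a+b)$ forces equality in both, and the equality case makes each factor a cube.

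I would drop the first route. The vertex-figure/join observation is fine. But the claim that a simple polytope with only quadrilateral $2$-faces has $2^{\dim}$ vertices ``by a count along the graph'' is asserted rather than proved, and the argument does not need it.

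Two small touch-ups:
\begin{itemize}
\item Treat $\dim F_i=0$ separately. The facet formula should not then include a term $f_{-1}(F_i)$. In that case the point is $C^0$ and the other factor is isomorphic to the whole face.
\item For the simplex claim, a vertex count is quicker than the adjacency argument: $f_0(F_1\times F_2)=f_0(F_1)f_0(F_2)\ge (a+1)(b+1)>a+b+1$ whenever $a,b\ge 1$.
\end{itemize}
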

	
	\begin{definition}\label{def:join}
		Let $P_1$ be a $d_1$-polytope and $P_2$ a $d_2$-polytope. Embed them in skew affine subspaces of $\R^{d_1+d_2+1}$:
		\[ P'_1=P_1\times\{0\}\times\{0\},\qquad P_2'=\{0\}\times P_2\times\{1\}. \]
		Finally, set the \emph{join} of $P_1$ and $P_2$ as $P_1*P_2:=\conv(P_1'\cup P_2')$.
	\end{definition}
	
	\begin{lemma}\label{lem:join}
		Let $P_1, P_2$ be polytopes.
		\begin{enumerate}
			\item $P_1*P_2$ is a polytope of dimension $\dim P_1+\dim P_2+1$. Its faces are of the form $F_1*F_2$, where each $F_i$ is a  face of $P_i$ (including the empty face and $P_i$ itself).
			\item $P*\sigma^{p-1}$ is the $p$-fold pyramid over $P$.
			\item $s(P_1*P_2)=s(P_1)+s(P_2)+1$, and $c(P_1*P_2)=\max\{c(P_1), c(P_2)\}$ provided $\dim P_i\geq 1$ for some $i$.
		\end{enumerate}
	\end{lemma}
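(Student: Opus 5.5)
Part (1) is classical. For the dimension, embed $P_1\subseteq\R^{d_1}$ and $P_2\subseteq\R^{d_2}$ as in Definition~\ref{def:join}. The affine hulls of $P_1'$ and $P_2'$ are skew: one lies in the hyperplane $t=0$, the other in $t=1$, and their direction spaces $\R^{d_1}\times 0\times 0$ and $0\times\R^{d_2}\times 0$ intersect trivially. Hence $\aff(P_1'\cup P_2')$ has dimension $d_1+d_2+1$.

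For the face description we use the two kinds of linear functionals. Take $\phi(x,y,t)=\phi_1(x)+\phi_2(y)+ct$. Its maximum over $P_1*P_2$ is attained on $\conv(M_1\cup M_2)$, where $M_i$ is the set of maximizers on $P_i'$, or only one of them if the values differ. Choosing $c$ appropriately realizes every $F_1*F_2$ with $F_i$ a face of $P_i$. Here empty faces are allowed, and $F_1*\varnothing=F_1$. Conversely, every face of the join arises this way, because every functional has this form. The join of faces $F_1,F_2$ of dimensions $i,j$ is again a polytope whose combinatorics is the join of the face lattices.

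Part (2) follows by induction on $p$ from $P*\sigma^0=\mathrm{pyr}(P)$ and $P*(\sigma^{a}*\sigma^{b})\cong (P*\sigma^a)*\sigma^b$. The associativity holds combinatorially, since the face lattice of a join is the product of face lattices. We also use $\sigma^a*\sigma^b=\sigma^{a+b+1}$.

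For part (3), the key observation is that a join $F_1*F_2$ with both $F_i$ nonempty is a simplex if and only if both $F_i$ are simplices. Indeed, the number of vertices is $f_0(F_1)+f_0(F_2)$, and the dimension is $\dim F_1+\dim F_2+1$. A polytope is a simplex exactly when $f_0=\dim+1$, and since $f_0(F_i)\ge\dim F_i+1$ always, equality holds for the join iff it holds for each factor. Taking $F_i$ to be simplex faces of maximal dimension $s(P_i)$ gives $s(P_1*P_2)=s(P_1)+s(P_2)+1$. This uses the convention $\sigma^{-1}=\varnothing$ and is consistent with the case where one factor is a point.

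For cubes, let $F_1*F_2$ be a cube face. If both $F_i$ are nonempty, then the join is a pyramid over something: choose a vertex $v$ of $F_2$, so that $F_1*F_2$ contains $F_1*\{v\}$, a pyramid. More decisively, every vertex $v$ of $F_1$ is adjacent in $F_1*F_2$ to every vertex of $F_2$, so the graph contains complete bipartite pieces. Cubes $C^m$ with $m\ge2$ contain no triangles and have diameter $m$. The cleaner argument is that a cube of dimension $m\ge1$ is never a join of two nonempty polytopes: a vertex $v\in F_1$ and a vertex $w\in F_2$ are always adjacent, and the same holds for $v'\in F_1$ adjacent to $v$. This produces a triangle $v v' w$ whenever $\dim F_1\ge1$, which lies in a $2$-face that is not a quadrilateral. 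So if the join is a cube, both $F_i$ must be points, giving an edge $=C^1$. Otherwise one $F_i$ is empty, and the cube face is a face of $P_1$ or of $P_2$.

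Hence $c(P_1*P_2)=\max\{c(P_1),c(P_2),1\}$. The hypothesis $\dim P_i\ge1$ for some $i$ guarantees that $\max\{c(P_1),c(P_2)\}\ge1$, since an edge is $C^1$. This yields the claim.

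The main care point is verifying that the triangle $vv'w$ is indeed a $2$-face. It is the join of the edge $vv'$ and the point $w$, so the face description of part (1) applies.
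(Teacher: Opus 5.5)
Your proof is correct. The paper gives no proof of this lemma: it records these properties of the join as standard facts and refers to the literature. Your argument is the standard verification:
\begin{itemize}
\item functionals $\phi_1(x)+\phi_2(y)+ct$ to obtain the face lattice;
\item the count $f_0=\dim+1$ to recognize simplices;
\item the triangular $2$-face $\{v,v'\}*\{w\}$ to exclude cubes of dimension $\ge2$.
\end{itemize}

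Two small slips should be fixed.
\begin{itemize}
\item The sentence claiming that a cube of dimension $m\ge1$ is never a join of two nonempty polytopes should say $m\ge2$. The edge $C^1$ is the join of two points, as you note yourself one line later.
\item The formula $c(P_1*P_2)=\max\{c(P_1),c(P_2),1\}$ presumes both factors are nonempty. The paper applies the lemma with the factor $\sigma^{-1}=\varnothing$ in Theorem~\ref{thm:general}. In that case $P_1*\varnothing=P_1$, and the claim follows directly from the hypothesis $\dim P_1\ge1$.
\end{itemize}

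The remarks about pyramids, complete bipartite pieces and the diameter of the cube play no role in the argument and can be cut.
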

	
	Now we are ready to give constructions that lead to lower bounds on $f(\ell, k)$ and $f_s(\ell, k)$.
	\begin{theorem}\label{thm:general}
		For all $\ell\ge2$ and $k\ge2$,
		\[ f(\ell,k) \ \ge\ D(\ell)\,(k-1)+1, \quad \text{where}\quad D(\ell):=5\left\lfloor \ell/2\right\rfloor+(\ell\bmod 2)-1. \]
	\end{theorem}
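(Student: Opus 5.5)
The plan is to build, for each $\ell,k\ge2$, a single polytope $P_{\ell,k}$ of dimension exactly $D(\ell)(k-1)$ with $s(P_{\ell,k})\le \ell-1$ and $c(P_{\ell,k})\le k-1$. Such a polytope has neither a simplex $\ell$-face nor a cube $k$-face, so $f(\ell,k)>D(\ell)(k-1)$. The building block is the $120$-cell $\Sigma$. All its $2$-faces are pentagons. Every simplex of dimension $\ge2$ has a triangular $2$-face and every cube of dimension $\ge2$ has a square $2$-face, so no face of $\Sigma$ of dimension $\ge2$ is a simplex or a cube. Its edges are both $\sigma^1$ and $C^1$, hence $s(\Sigma)=c(\Sigma)=1$ and $\dim\Sigma=4$.

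Step 1 builds a factor $Q_\ell$ with $\dim Q_\ell=D(\ell)$, $s(Q_\ell)=\ell-1$ and $c(Q_\ell)=1$, using joins and Lemma~\ref{lem:join}. Write $t=\lfloor \ell/2\rfloor\ge1$ and let $J_t=\Sigma*\Sigma*\cdots*\Sigma$ be the join of $t$ copies of $\Sigma$. By iterating Lemma~\ref{lem:join}(1),(3), we get $\dim J_t=4t+(t-1)=5t-1$ and $s(J_t)=t\cdot 1+(t-1)=2t-1$. Also $c(J_t)=\max\{c(\Sigma)\}=1$, and the proviso $\dim\ge1$ holds. If $\ell$ is even, set $Q_\ell=J_t$; then $\dim Q_\ell=5t-1=D(\ell)$ and $s(Q_\ell)=\ell-1$. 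If $\ell$ is odd, set $Q_\ell=J_t*\sigma^0$, the pyramid over $J_t$ by Lemma~\ref{lem:join}(2). Then $\dim Q_\ell=5t=D(\ell)$, $s(Q_\ell)=(2t-1)+0+1=\ell-1$, and $c(Q_\ell)=\max\{1,0\}=1$.

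Step 2 takes products. Let $P_{\ell,k}=Q_\ell\times\cdots\times Q_\ell$ with $k-1$ factors. By Lemma~\ref{lem:product}, $\dim P_{\ell,k}=D(\ell)(k-1)$, $s(P_{\ell,k})=\max s(Q_\ell)=\ell-1$ and $c(P_{\ell,k})=(k-1)\,c(Q_\ell)=k-1$. So $P_{\ell,k}$ has no $\sigma^\ell$ face and no $C^k$ face, which gives $f(\ell,k)\ge D(\ell)(k-1)+1$.

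The only point needing care is the bookkeeping with Lemma~\ref{lem:join}(3). One must check that the join formula $s(P_1*P_2)=s(P_1)+s(P_2)+1$ is applied with $s(\sigma^0)=0$, and that the cube count is not increased by joins. This holds because the join formula for $c$ is a maximum, while the product formula for $c$ is additive. These two asymmetries are exactly what make the construction work: joins raise the simplex dimension cheaply in dimension, and products raise the cube dimension linearly. Beyond this, the main thing to verify is the elementary fact $s(\Sigma)=c(\Sigma)=1$, which follows from all $2$-faces of $\Sigma$ being pentagons.
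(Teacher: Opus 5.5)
Your proposal is correct and follows the paper's proof: the paper uses the block $B_\ell=\Sigma*\cdots*\Sigma*\sigma^{p-1}$ with $r=\lfloor\ell/2\rfloor$ copies of $\Sigma$ and $p=\ell\bmod 2$ (convention $\sigma^{-1}=\varnothing$), which is your $Q_\ell$ written uniformly for both parities, and then takes the product of $k-1$ copies of it. Your check that $s(\Sigma)=c(\Sigma)=1$, because every $2$-face of $\Sigma$ is a pentagon, supplies a fact the paper uses without comment.
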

	\begin{proof}
		Put $r=\lfloor \ell/2\rfloor$ and $p=\ell-2r\in\{0,1\}$, let
		\[ B_\ell\ :=\ \underbrace{\Sigma*\cdots*\Sigma}_{r}\,*\,\sigma^{p-1},\]
		and take $P$ as the product of $k-1$ copies of $B_\ell$. Then by Lemmas \ref{lem:product} and \ref{lem:join}, $$\dim P=\dim B_\ell\cdot (k-1)=(5r+p-1)(k-1)=D(\ell)(k-1),$$ $$s(P)=s(B_\ell)=r\,s(\Sigma)+s(\sigma^{p-1})+r=2r+p-1<\ell,$$
		$$c(P)=c(B_\ell)(k-1)=\max\{c(\Sigma), c(\sigma^{p-1})\}\cdot (k-1)=k-1<k.$$
		Hence $P$ has no simplex $\ell$-face and no cube $k$-face and the lower bound follows.
	\end{proof}

Now we move on to giving a lower bound on $f_s(\ell,k)$. The building block is the orbit polytope of a point with three distinct coordinate values; its relevant properties become transparent once one
observes that it is a hyperplane section of a cube.

\begin{definition}\label{def:atom}
For integers $p,q\ge1$, let $N:=p+q+1$ and fix $0<\theta<1$. Let
\[
T(p,q):=\conv\Big\{\pi\cdot\big(\underbrace{1,\dots,1}_{p},\,\theta,\,
\underbrace{0,\dots,0}_{q}\big)\ :\ \pi\in S_N\Big\}\ \subseteq\ \R^{N},
\]
the orbit polytope, under the coordinate permutations, of a point with three distinct coordinate
values of multiplicities $p,1,q$.
\end{definition}

\begin{lemma}\label{lm:slice}
Let $C=[0,1]^N$ and $H=\{x\in\R^N:\ x_1+\dots+x_N=p+\theta\}$. Then
\[
T(p,q)\ =\ C\cap H .
\]
In particular, $T(p,q)$ is an $(p+q)$-polytope with $\binom{N}{p}(N-p)$ vertices, and its
combinatorial type does not depend on $\theta$.
\end{lemma}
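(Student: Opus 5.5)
We argue by two inclusions, then read off the dimension, the vertex count, and the independence of $\theta$ from the description $C\cap H$.

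\emph{First inclusion.} Every point $\pi\cdot(1,\dots,1,\theta,0,\dots,0)$ lies in $[0,1]^N$. Its coordinate sum is $p+\theta$, so it lies in $H$. Since $C\cap H$ is convex, $T(p,q)\subseteq C\cap H$.

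\emph{Second inclusion.} We identify the vertices of the polytope $C\cap H$. A vertex $x$ of $C\cap H$ is a point where $N$ linearly independent constraints are tight. One of them is the hyperplane equation $\sum x_i=p+\theta$, so at least $N-1$ of the box constraints $x_i\in\{0,1\}$ are tight. Hence at most one coordinate of $x$ lies strictly inside $(0,1)$.

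Suppose all coordinates are integral. Then the sum $p+\theta$ is an integer, contradicting $0<\theta<1$. So exactly one coordinate is fractional. The sum condition then forces that coordinate to equal $\theta$ and exactly $p$ of the others to equal $1$, with the remaining $q$ equal to $0$. Thus every vertex of $C\cap H$ is a permutation of $(1^p,\theta,0^q)$, and $C\cap H=\conv(\text{vertices})\subseteq T(p,q)$.

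\emph{Vertex count.} Conversely, each such point is a vertex of $C\cap H$: the $N-1$ tight box constraints together with $H$ determine it uniquely. The number of such points is the number of ways to choose the position of $\theta$ and then $p$ positions for the $1$'s, which is $N\binom{N-1}{p}=\binom{N}{p}(N-p)$.

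\emph{Dimension.} $H$ meets the interior of $C$, for instance at the point with all coordinates $(p+\theta)/N\in(0,1)$. Hence $\dim(C\cap H)=N-1=p+q$.

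\emph{Independence of $\theta$.} For every $\theta\in(0,1)$, the hyperplane $H_\theta$ passes through no vertex of $C$, because the vertices of $C$ have integer coordinate sums. Also, $H_\theta$ separates exactly the same vertex sets of $C$, namely those with coordinate sum $\le p$ from those with sum $\ge p+1$.

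The faces of $C\cap H_\theta$ are exactly the nonempty intersections $F\cap H_\theta$ for faces $F$ of $C$ that have vertices strictly on both sides of $H_\theta$. This crossing condition depends only on the vertex partition above, which is the same for all $\theta$. Distinct faces $F$ of $C$ give distinct slices: each slice has relative interior inside the relative interior of $F$, so the slice determines $F$. Inclusion of slices corresponds to inclusion of the faces $F$.

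Therefore the face lattice of $C\cap H_\theta$ is isomorphic, via $F\mapsto F\cap H_\theta$, to the poset of faces of $C$ crossing the partition. This poset is independent of $\theta$, so the combinatorial type of $T(p,q)$ does not depend on $\theta$.

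The main point requiring care is this correspondence between faces of $C$ and faces of the slice. It is the standard description of faces of a generic hyperplane section of a polytope, valid here because $H_\theta$ avoids all vertices of $C$.
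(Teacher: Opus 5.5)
Your proof is correct. Its overall shape matches the paper's: both identify the vertex set of $C\cap H$ with the orbit of $(1^p,\theta,0^q)$, and both get the dimension from $H$ meeting the interior of $C$.

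The difference is in how the vertices are found. The paper uses the face correspondence for a hyperplane section. Faces of $C\cap H$ are the sets $F\cap H$ with $F$ a face of $C$. Since $H$ avoids every vertex of $C$, the vertices of the section are the points where $H$ crosses an edge $[v,v+e_i]$. This happens exactly when $|\supp(v)|=p$, at the point $v+\theta e_i$.

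You instead use the algebraic description of vertices of $\{0\le x\le 1,\ \sum x_i=p+\theta\}$ as points with $N$ linearly independent tight constraints. This forces at least $N-1$ coordinates into $\{0,1\}$, and an integrality argument then pins the remaining coordinate to $\theta$. Your route is self-contained and shows directly that each orbit point is a vertex. It is really the same fact seen from the other side: the $N-1$ tight box constraints cut out exactly the crossing edge.

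You also do something the paper does not. The paper states that the combinatorial type is independent of $\theta$ but gives no separate argument. You prove it, via the isomorphism $F\mapsto F\cap H_\theta$ between the face lattice of the section and the poset of faces of $C$ that meet both classes of a fixed vertex partition: coordinate sum at most $p$ versus at least $p+1$.

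One assertion there is left unjustified: that $\mathrm{relint}(F\cap H)\subseteq \mathrm{relint}\,F$. It holds because $H$ meets $\mathrm{relint}\,F$ whenever $F$ has vertices on both sides. To see this, join a relative interior point of $F$ to a vertex of $F$ on the opposite side of $H$; the crossing point lies in $\mathrm{relint}\,F$. It follows that $\mathrm{relint}(F\cap H)=\mathrm{relint}\,F\cap H$.
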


\begin{proof}
Since $0<p+\theta<N$, the hyperplane $H$ meets the interior of $C$, so $P:=C\cap H$ is a polytope of
dimension $N-1=p+q$. Each face of $P$ is $F\cap H$ for some face $F$ of $C$. Since $\theta\notin\{0,1\}$, no vertex of $C$ lies on $H$. Thus every vertex of $P$ is $F\cap H$, where $F$ is an edge of $C$.

Write $e_1,\dots,e_N$ for the standard basis of $\R^N$ and $\supp(v):=\{j:v_j=1\}$ for a $0/1$ vector $v$. Every edge of $C$ has the form $[\,v,\ v+e_i\,]$ with $v$ a $0/1$ vector and $i\notin\supp(v)$. The coordinate sums of its two endpoints are $|\supp(v)|$ and $|\supp(v)|+1$, so this edge meets $H$ if and only if $|\supp(v)|<p+\theta<|\supp(v)|+1$, that is, if and only if $|\supp(v)|=p$. In this case, the intersection point is $v+\theta e_i$. These are precisely the points listed in Definition~\ref{def:atom}, and their number is $\binom{N}{p}(N-p)$: choose $\supp(v)$, then $i\notin\supp(v)$. Hence $P$ and $T(p,q)$ have the same
vertex set, so they are equal.
\end{proof}

\begin{lemma}\label{lm:atom}
$T(p,q)$ is a simple $(p+q)$-polytope with
\[
s\big(T(p,q)\big)=\max\{p,q\},\qquad c\big(T(p,q)\big)=1 .
\]
\end{lemma}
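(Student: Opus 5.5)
We use Lemma~\ref{lm:slice}, $T(p,q)=C\cap H$, together with the fact that $H$ avoids every vertex of the cube $C=[0,1]^N$. Under this condition every nonempty face of $C\cap H$ is $F\cap H$ for a face $F$ of $C$ that meets $H$ in its relative interior. The map $F\mapsto F\cap H$ is then an order-preserving bijection, and it lowers dimension by one.

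\textbf{Faces of $C$ and what $H$ cuts out.} A face of $C$ is determined by a set $A$ of coordinates fixed to $1$, a set $B$ fixed to $0$, and a set $J$ of free coordinates, where $|J|=j$ and $A,B,J$ partition $[N]$. Such a face meets $H$ in its relative interior exactly when $|A|<p+\theta<|A|+j$, that is, when $|A|\le p$ and $|B|\le q$. The section $F\cap H$ is a copy of $[0,1]^J\cap\{\sum_J x=p+\theta-|A|\}$. This is again a polytope of type $T(p',q')$, with $p'=p-|A|$ and $q'=q-|B|$, except in the degenerate cases where $p'=0$ or $q'=0$. When $p'=0$ the section is $\{x\in[0,1]^J:\sum x=\theta\}$, a $(j-1)$-simplex, provided $\theta<1$. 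Symmetrically, when $q'=0$ it is a $(j-1)$-simplex, since $\sum x=j-1+\theta$ cuts off a corner. When both $p'$ and $q'$ are positive the face is $T(p',q')$.

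\textbf{Simplicity.} A vertex of $T(p,q)$ corresponds to an edge of $C$, so $|A|=p$, $|B|=q$ and $j=1$. The faces of $C$ containing that edge correspond to choosing which coordinates of $A\cup B$ to free. The facets of $T$ through the vertex come from the facets of $C$ containing the edge, and there are $N-1=p+q$ of them. Since $\dim T=p+q$, the polytope $T(p,q)$ is simple. As an alternative check, the interval from the edge to $C$ in the face lattice of $C$ is Boolean of rank $N-1$, and it maps isomorphically onto the interval above the vertex in $T$.

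\textbf{Simplex faces.} From the face list, the simplex faces of $T(p,q)$ are exactly the degenerate ones, together with the cases $T(p',q')$ of dimension $\le1$. The latter are trivial: $p'+q'\geq 2$ whenever both are positive, and $T(1,1)$ is a hexagon, which is not a simplex. We need to rule out $T(p',q')$ being a simplex when $p',q'\ge1$. Its vertex count $\binom{N'}{p'}(N'-p')$ exceeds $N'=p'+q'+1$ in that case. With $p'=0$ the simplex has dimension $j-1=q-|B|\le q$, and the value $q$ is attained when $B=\emptyset$ and $A$ is all $p$ ones. Symmetrically $q'=0$ gives simplices of dimension up to $p$. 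Hence $s(T(p,q))=\max\{p,q\}$. The one subtle point is that the whole polytope $T(p,q)$ also counts as a face, and it is not a simplex because $p,q\geq1$.

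\textbf{Cube faces.} Edges are $1$-cubes, so $c\ge1$. For $c\le1$ we must show that no $2$-face is a quadrilateral. A $2$-face has $j=3$. In the degenerate cases it is a triangle. Otherwise $(p',q')=(1,1)$, and the face is the hexagon $T(1,1)$, which has $3\cdot 2=6$ vertices. Hence no $2$-face is a square, and no face of dimension $\ge2$ is a cube, since every cube of dimension at least $2$ has square $2$-faces. The main obstacle is carrying out the face-bijection bookkeeping carefully, in particular verifying that each degenerate section is genuinely a simplex of the stated dimension. This uses the assumption $0<\theta<1$.
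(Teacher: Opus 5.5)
Your proof is correct and is essentially the paper's argument. Faces of $T(p,q)$ are the sections $F(A,B)\cap H$ of cube faces, and the simplex faces are exactly your degenerate cases $p'=0$ or $q'=0$; these are the paper's $r=0$ and $r=m-1$, which it detects via the vertex count $\binom{m}{r}(m-r)=m$. The $2$-faces are triangles or hexagons, and simplicity comes from counting the $p+q$ cube facets through each vertex. Your explicit use of the order-preserving bijection $F\mapsto F\cap H$ just makes precise a step the paper leaves implicit, namely that distinct cube facets give distinct facets of $T(p,q)$.
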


\begin{proof}
We use the description $T(p,q)=C\cap H$ of Lemma~\ref{lm:slice}.

Every face of the cube $C$ is of the form $F(A,B)=\{x\in C:\ x_i=1\ (i\in A),\ x_i=0\ (i\in B)\}$ for disjoint $A,B\subseteq[N]$. Set $m:=N-|A|-|B|$ and $M:=[N]\setminus(A\cup B)$. Thus every face of $T(p,q)$ is
\begin{equation}\label{eq:face}
F(A,B)\cap H=\Big\{x\in[0,1]^{M}:\ \sum_{i\in M}x_i=r+\theta\Big\},\qquad r:=p-|A|.
\end{equation}
By Lemma~\ref{lm:slice}, $F(A, B)\cap H$ is nonempty exactly when $0\le r\le m-1$, in which case it is a
polytope of dimension $m-1$ with $\binom{m}{r}(m-r)$ vertices.

A face \eqref{eq:face} is a simplex if and only if $\binom{m}{r}(m-r)=m$. This holds for $r=0$ and for $r=m-1$,
while for $1\le r\le m-2$ we have $\binom{m}{r}\ge m$ and $m-r\ge2$, so $\binom mr(m-r)\ge2m>m$.
If $r=0$, that is $|A|=p$, the face has dimension $m-1=q-|B|\le q$, with equality for $B=\emptyset$;
if $r=m-1$, that is $|B|=q$, it has dimension $m-1=p-|A|\le p$, with equality for $A=\emptyset$.
Hence $s(T(p,q))=\max\{p,q\}$.

By \eqref{eq:face}, the $2$-faces of $T(p,q)$ are the sections with $m=3$, and
these have $\binom{3}{r}(3-r)$ vertices, namely $3$ for $r\in\{0,2\}$ and $6$ for $r=1$. So no
$2$-face is a square, and therefore $c(T(p,q))=1$.

Finally, the facets of $C$ are $\{x_j=0\}$ and $\{x_j=1\}$, so every facet of $T(p,q)$ is
of this form intersected with $H$. The vertex $v+\theta e_i$ lies on $\{x_j=1\}$ for the $p$ indices
$j\in\supp(v)$ and on $\{x_j=0\}$ for the $q$ indices $j\notin\supp(v)\cup\{i\}$, and on no other
facet of $C$, because its $i$-th coordinate is $\theta\in(0,1)$. Hence every vertex of $T(p,q)$ lies on
exactly $p+q$ facets, and $T(p,q)$ is simple.

\end{proof}

\begin{remark}\label{rm:wythoff}
The polytopes $T(p,q)$ arise from \emph{Wythoff's construction} \cite{W18,Cox35},
\cite[\S5.7 and \S11.6]{Cox}. Let $W$ be a finite group generated by reflections with Coxeter graph
$\Gamma$ on the node set $S$, each $s\in S$ acting as the reflection in a hyperplane $H_s$, and let
$R\subseteq S$. Choose a point $\lambda_R$ in the closed fundamental chamber of $W$ lying off
exactly the mirrors indexed by $R$, that is,
\[
\lambda_R\in H_s\iff s\notin R ,
\]
and put
\[
t_R(W):=\conv\big(W\lambda_R\big).
\]
In the graph one \emph{circles} the nodes of $R$; the combinatorial type of $t_R(W)$
depends only on $\Gamma$ and $R$, not on the choice of $\lambda_R$.

Take $W=S_N$ acting on $\R^N$ by permuting coordinates. Its Coxeter graph is a path with
$N-1$ nodes $s_0,\dots,s_{N-2}$, where $s_{i-1}$ is the transposition of the $i$-th and
$(i+1)$-st coordinates, with mirror $\{x_i=x_{i+1}\}$. The closed fundamental chamber is
$\{x_1\ge\cdots\ge x_N\}$. The point $(1,\dots,1,\theta,0,\dots,0)$ of Definition~\ref{def:atom} lies in
this chamber and lies off exactly the two mirrors $\{x_p=x_{p+1}\}$ and $\{x_{p+1}=x_{p+2}\}$.
Hence
\[
T(p,q)=t_{\{s_{p-1},s_p\}}(S_N)=t_{p-1,p}\{3^{N-2}\}.
\]
The second expression is Coxeter's symbol for the uniform polytopes obtained from the regular
simplex $\{3^{N-2}\}$ by ringing the nodes with the indicated indices.
\begin{figure}[H]
\centering
\begin{tikzpicture}[x=1cm,y=1cm,every node/.style={font=\small}]
\tikzset{nd/.style={circle,fill=black,inner sep=1.6pt},
         rng/.style={draw,circle,inner sep=4.2pt}}
  \node[nd] (c0) at (0,0) {};
  \node[nd] (c1) at (2.0,0) {}; \node[rng] at (2.0,0) {};
  \node[nd] (c2) at (3.3,0) {}; \node[rng] at (3.3,0) {};
  \node[nd] (c3) at (5.2,0) {};
  \draw (c0)--(c1); \draw (c1)--(c2); \draw (c2)--(c3);
  \node[fill=white,inner sep=1pt] at (1.0,0) {$\cdots$};
  \node[fill=white,inner sep=1pt] at (4.25,0) {$\cdots$};
  \node[below=6pt] at (c0) {$s_0$};      \node[below=6pt] at (c1) {$s_{p-1}$};
  \node[below=6pt] at (c2) {$s_{p}$};    \node[below=6pt] at (c3) {$s_{N-2}$};
  \node[right=22pt,anchor=west] at (c3) {$t_{p-1,p}\{3^{N-2}\}=T(p,q)$};
\end{tikzpicture}
\caption{The Coxeter graph of type $A_{N-1}$ with the two adjacent ringed nodes $s_{p-1},s_p$.}
\label{fig:diagrams}
\end{figure}
\end{remark}

\begin{theorem}\label{thm:simple}
For all $\ell,k\ge2$,
\[
f_s(\ell,k)\ \ge\ \max\{4,\ 2(\ell-1)\}\,(k-1)+1 .
\]
\end{theorem}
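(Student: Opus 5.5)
We will exhibit, for each $\ell,k\ge2$, a simple polytope of dimension $\max\{4,2(\ell-1)\}(k-1)$ with no simplex $\ell$-face and no cube $k$-face. The construction follows the proof of Theorem~\ref{thm:general}: take a simple building block $B$ with $s(B)\le \ell-1$ and $c(B)=1$, and form the product of $k-1$ copies of $B$. By Lemma~\ref{lem:product}, the product is simple, has dimension $(k-1)\dim B$, satisfies $s=s(B)\le\ell-1<\ell$, and has $c=(k-1)c(B)=k-1<k$. So it suffices to find a simple block $B$ of dimension $\max\{4,2(\ell-1)\}$ with $s(B)\le \ell-1$ and $c(B)=1$.

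For $\ell\ge3$, take $B=T(\ell-1,\ell-1)$. By Lemma~\ref{lm:atom}, it is simple, of dimension $2(\ell-1)$, with $s(B)=\ell-1$ and $c(B)=1$. When $\ell=3$ this already gives dimension $4=\max\{4,2(\ell-1)\}$.

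For $\ell=2$, we need a simple $4$-polytope with no triangular $2$-face and no square $2$-face. The $120$-cell $\Sigma$ works: it is simple, all its $2$-faces are pentagons, so $s(\Sigma)=1$ and $c(\Sigma)=1$. (Its $3$-faces are dodecahedra and its edges are not cubes of dimension $\ge2$.) This is exactly the value of $\Sigma$ already used in Theorem~\ref{thm:general}. Since $\max\{4,2\}=4=\dim\Sigma$, taking $B=\Sigma$ gives the bound. We must check two facts: that $\Sigma$ is simple (each vertex lies in $4$ dodecahedral cells), and that $c(\Sigma)=1$ (no $2$-face is a quadrilateral, so there is no cube face of dimension $\ge2$). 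Both are standard.

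Finally, the product of $k-1$ copies of $B$ has dimension $\max\{4,2(\ell-1)\}(k-1)$ and has neither forbidden face. Hence $f_s(\ell,k)$ exceeds this dimension, giving $f_s(\ell,k)\ge \max\{4,2(\ell-1)\}(k-1)+1$. The case $\ell\ge4$ uses $T$, and $\ell\in\{2,3\}$ uses $\Sigma$ or $T(2,2)$.

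The only step requiring care is the identity $c(P)=c(P_1)+c(P_2)$ from Lemma~\ref{lem:product}. It relies on a product face $F_1\times F_2$ being a cube only when both factors are cubes. We take this as given from the lemma, so the main obstacle is just verifying the block properties, and these are supplied by Lemma~\ref{lm:atom}.
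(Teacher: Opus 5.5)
Your proposal is correct and is the paper's proof: you take the product of $k-1$ copies of a simple block, namely the $120$-cell $\Sigma$ or $T(\ell-1,\ell-1)$, and read off simplicity, dimension, $s$ and $c$ from Lemma~\ref{lem:product} and Lemma~\ref{lm:atom}. The only cosmetic difference is at $\ell=3$: the paper uses $\Sigma$ for all $\ell\le3$, while you use $T(2,2)$, which gives the same dimension $4$.
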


\begin{proof}
For $\ell\le3$ let $P$ be the product of $k-1$ copies of the $120$-cell $\Sigma$; by Lemma~\ref{lem:product}, $P$ is
a simple $4(k-1)$-polytope with $s(P)=s(\Sigma)=1<\ell$ and $c(P)=(k-1)c(\Sigma)=k-1<k$.

For $\ell\ge3$ let $Q:=T(\ell-1,\ell-1)$, a simple $2(\ell-1)$-polytope with $s(Q)=\ell-1$ and
$c(Q)=1$ by Lemma~\ref{lm:atom}, and let $P$ be the product of $k-1$ copies of $Q$. By Lemma~\ref{lem:product},
$P$ is simple of dimension $2(\ell-1)(k-1)$, with $s(P)=s(Q)=\ell-1<\ell$ and
$c(P)=(k-1)c(Q)=k-1<k$. In either case $P$ has no simplex $\ell$-face and no cube $k$-face.
\end{proof}

\begin{remark}
On the diagonal, Theorem~\ref{thm:simple} gives $f_s(k,k)\ge2(k-1)^2+1$, which is asymptotically
$2k^2$.
\end{remark}

\begin{remark}\label{rm:new22}
Let $$g(t):=\max\{d:\ \text{there is a square-free simple $d$-polytope $P$ with }s(P)\le t\}.$$
Lemma~\ref{lm:atom} applied to $T(t,t)$ gives
\[
g(t)\ \ge\ \max\{4,\,2t\}\qquad(t\ge1),
\]
the two equal multiplicities giving the largest dimension for a prescribed value of $s$. In
particular $g(3)\ge6$, witnessed by $T(3,3)$, a simple $6$-polytope with $140$ vertices and $14$
facets, and consequently $f_s(4,k)\ge6(k-1)+1$.

Two questions remain. First, can we further improve the lower bound for $g(t)$? Among the Wythoffians we found no construction better than $T(p,q)$. Second, is there an operation on polytopes raising the dimension without raising $s$ or $c$ proportionally? The product and the join each pay in a fixed currency, and Theorem~\ref{thm:simple} exhausts what the product alone can do with the blocks we know.
\end{remark}

\section{$3$-faces that appear in high dimensions: Proof of Theorem C}
\label{sec:linearprogramming}

In this section we prove our third main result. Informally, it says that
once a polytope is high-dimensional enough, it cannot avoid containing a
combinatorially simple three-dimensional piece: some $3$-face with only a
handful of facets. Our argument follows closely the linear programming approach of Meisinger, Kleinschmidt, and Kalai \cite{Meisinger2000}. The rationality
assumption in their theorem was needed because their proof used the
nonnegativity of the toric $g_3$-numbers, which at the time was known only
for rational polytopes via intersection cohomology. Karu's subsequent hard
Lefschetz theorem extended this nonnegativity to all convex polytopes
\cite{Karu2004}.  Thus, our result holds for arbitrary convex polytopes.

\begin{theorem}\label{thm: d15 cap13}
Every convex polytope of dimension $d\geq 15$ has a $3$-face with at most
$13$ facets.
\end{theorem}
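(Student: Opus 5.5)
We argue by contradiction, following the linear-programming strategy of Meisinger, Kleinschmidt and Kalai. Since every $d$-polytope with $d\ge 15$ has a $15$-face, and a $3$-face of that face is a $3$-face of the original polytope, it suffices to treat $d=15$. Suppose then that $P$ is a $15$-polytope all of whose $3$-faces have at least $14$ facets. We will translate this into linear inequalities on the flag vector of $P$ and show that these are incompatible with the known linear constraints on flag vectors.

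\textbf{Step 1: constraints from the hypothesis.} Every $3$-face $F$ satisfies $f_2(F)\ge 14$. Summing over all $3$-faces gives $f_{23}(P)\ge 14 f_3(P)$. Since $F$ is a $3$-polytope, Euler's relation gives $f_1(F)=f_0(F)+f_2(F)-2$, and we have $f_1(F)\ge 3f_2(F)/2$ and $f_1(F)\ge 3f_0(F)/2$. Summing these over $3$-faces produces linear inequalities in the flag numbers $f_{03},f_{13},f_{23},f_3$. Applying the same reasoning to the dual of $F$, or to faces of the duals of higher faces, may give further inequalities of the same kind.

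\textbf{Step 2: constraints valid for all polytopes.} Next we collect the linear inequalities on flag vectors of $15$-polytopes (or of suitable faces and quotients of $P$) that hold unconditionally. These include:
\begin{itemize}
\item the generalized Dehn--Sommerville (Bayer--Billera) relations;
\item nonnegativity of the toric $g$-numbers, in particular $g_2$ and $g_3$, for $P$, for its faces and for its quotient polytopes (vertex figures and dual faces). This holds for all polytopes by Karu~\cite{Karu2004};
\item Kalai's convolution inequalities $f_{\cdot}(G)\,g(P/G)\ge 0$ type products;
\item obvious inequalities such as $f_{ij}\ge \binom{j+1}{i+1}f_j$.
\end{itemize}
The aim is to express $f_{23}$, or rather a weighted combination such as $f_{23}-14f_3$, in terms of a small number of flag numbers, and to show that this combination is forced to be negative.

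\textbf{Step 3: the LP certificate.} We set up a linear program whose variables are the flag numbers of $P$, normalized for instance by $f_3=1$. Its constraints are those of Steps 1 and 2, and we check that it is infeasible. By Farkas' lemma, infeasibility yields nonnegative multipliers whose combination of the constraints gives $0<0$ (or $-1\ge 0$). Solving the LP exactly in rational arithmetic and rounding to an exact rational certificate turns this into a checkable identity on flag numbers. That identity is the proof: it exhibits $f_{23}-14f_3$ as a negative combination of quantities known to be nonnegative, plus terms that vanish by Dehn--Sommerville.

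\textbf{Main obstacle.} The hard part is Steps 2--3: selecting a family of valid inequalities strong enough to make the LP infeasible already at $d=15$ with threshold $13$, while keeping the certificate small and exactly rational. My first attempt would use the $g_3$-type inequalities applied to all $3$-faces and to the relevant quotients, in the spirit of the \textsc{Flagtool} computations. If that is not enough, I would add convolutions of $g_2$ for faces with $g_2$ for quotients until the LP becomes infeasible.
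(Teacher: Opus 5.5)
You have the right framework, and it is the paper's: reduce to $d=15$, encode the hypothesis and the unconditional facts (Dehn--Sommerville, $g_i\ge 0$ via Karu, Kalai's convolution) as linear conditions on the flag vector, and conclude from a Farkas identity $\sum_i\lambda_iL_i=-\mathbf 1$ with $\lambda_i\ge 0$. But the whole content of the theorem is that such an identity exists at $d=15$ with threshold $14$. Your Step~3 simply asserts infeasibility, for a pool where there is no reason to expect it.

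Your only conditional input is the hypothesis summed over the $3$-faces of $P$: $f_{23}\ge 14f_3$ plus Euler and edge-count consequences. This is weaker even than what Meisinger--Kleinschmidt--Kalai used, and the paper stresses that computing power alone would not have sufficed. The missing idea is to push the hypothesis up to \emph{higher-dimensional faces} to get strong conditional building blocks:
\begin{itemize}
\item every $4$-face has at least $15$ facets (ridge argument);
\item every $5$-face has at least $24$ facets: pass to the dual $R$ with $n$ vertices, so $f_{\{1,2\}}(R)\ge 14f_1(R)\ge 14\lceil 15n/2\rceil$, and compare with the Billera--Ehrenborg flag upper bound $f_{\{1,2\}}(R)\le 6(n^2-6n+10)$;
\item hence every $r$-face has at least $r+19$ facets;
\item every $r$-face has at least $M_r(14)$ vertices ($76$ for $r=15$), by the crossing-facet and common-$2$-face arguments.
\end{itemize}
Each such bound $H\ge 0$ on $r$-faces is then used as the left factor in $\sum_{F\in\mathcal F_r(P)}H(F)\,G(P/F)\ge 0$. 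Here $G$ ranges over iterated convolutions of unconditional factors, and that pool also contains the Billera--Ehrenborg $cd$-index simplex-minimality inequalities and their duals, which you omit.

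Even with all $367{,}749$ such rows, the contradiction at $q=14$ is tight. The paper's full dimension-$14$ system gave no contradiction at $q=14$. In dimension $15$, a certificate with all multipliers positive appeared only after a final single-row exchange. So without these ingredients and an explicit, exactly verified certificate, Step~3 is a research plan rather than a proof.

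Some smaller points also need fixing.
\begin{itemize}
\item The hypothesis is inherited by faces but not by quotients, so a conditional factor may only occupy the face slot of a convolution. Your suggestion to apply it ``to faces of the duals of higher faces'' is legitimate only for faces of $G^*$ that correspond to genuine $3$-faces of $P$, as with the edges of $Q^*$ in the $5$-face argument. A $3$-face of $G^*$ is instead dual to a $3$-dimensional quotient $G/H$, about which the hypothesis says nothing.
\item Applying ``$g_3$-type inequalities to all $3$-faces'' is vacuous, since a $3$-polytope has only $g_0,g_1$. The number $g_3$ can enter only through quotients of dimension at least $6$.
\item Normalize by $f_\emptyset=\phi_{c^{15}}=1$, not $f_3=1$. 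With $f_3=1$ the constant coordinate is left free, so points of the cone with $f_\emptyset\le 0$ become feasible. Infeasibility would then require a certificate for $-f_3$ rather than $-\mathbf 1$, which need not exist.
\end{itemize}
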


It is enough to prove the theorem when $d=15$. Indeed, if $d>15$, take any $15$-face $Q$ of the given polytope. Every $3$-face of $Q$ is also a $3$-face of the original polytope. We therefore assume throughout this section that $P$ is a $15$-dimensional polytope such that
\begin{equation}\label{eq: d15 counterexample}
 f_2(F)\geq 14\qquad\text{for every $3$-face $F$ of $P$},
\end{equation}
and derive a contradiction.

\subsection{Flag functionals and convolution}
\label{subsec: flag functionals and convolution}

We recall the definition of flag $f$-numbers and flag $h$-numbers of $P$ (see \cite{billerabjorner-facesurvey} and the references there for an overview). These numbers are truly important for understanding the combinatorics of faces.

\begin{definition}\label{def:flag-f}
Let $P$ be an $n$-polytope. For a subset $S=\{s_1<s_2<\dots<s_k\}\subseteq\{0,1,\dots,n-1\}$, the
\emph{flag number} $f_S(P)$ is the number of chains of proper faces
\[
  F_1\subset F_2\subset\cdots\subset F_k, \qquad \dim F_j=s_j \ \ (1\le j\le k).
\]
By convention $f_\emptyset(P)=1$. In particular, $f_{\{i\}}(P)$ is the usual face number $f_i(P)$ and $f_{\{i, j\}}(P)$ equals $f_{ij}(P)$ we defined in previous sections; we continue using the notions $f_i(P)$ and $f_{ij}(P)$ for simplicity. The vector $\bigl(f_S(P)\bigr)_{S}$ is the \emph{flag $f$-vector}.
\end{definition}

Informally, $f_S(P)$ counts incidences between faces of the dimensions listed in $S$; for instance $f_{\{0,2\}}(P)$ counts pairs consisting of a vertex and a $2$-face containing it.

\begin{definition}\label{def:flag-h}
The \emph{flag $h$-numbers} of $P$ are defined from the flag numbers by inclusion--exclusion:
\[
  h_S(P) \;=\; \sum_{T\subseteq S} (-1)^{\,|S\setminus T|}\, f_T(P),
  \qquad S\subseteq\{0,1,\dots,n-1\},
\]
or equivalently, $f_S(P)=\sum_{T\subseteq S} h_T(P)$.
\end{definition}

We are particularly interested in two families of numbers that arise from the face numbers. The first family consists of the toric $g$-numbers. They are defined by the following recursion.

\begin{definition}\label{def:toric}
For every polytope $P$, define polynomials $f(P,x),\,g(P,x)\in\mathbb{Z}[x]$
by:
\begin{enumerate}
  \item $f(\emptyset,x)=g(\emptyset,x)=1$;
  \item if $\dim P=n\ge 0$, then
  \[
    f(P,x) \;=\; \sum_{\emptyset\,\le\, G\,<\,P} g(G,x)\,(x-1)^{\,n-1-\dim G};
  \]
   \item writing $f(P,x)=\sum_{i=0}^{n} h_i(P)\,x^{\,i}$,
  \[
    g(P,x) \;=\; \sum_{i=0}^{\lfloor n/2\rfloor}\bigl(h_i(P)-h_{i-1}(P)\bigr)\,x^{i},
    \qquad h_{-1}(P):=0 .
  \]
\end{enumerate}
The \emph{toric $h$-vector} of $P$ is $h(P)=\bigl(h_0(P),\dots,h_n(P)\bigr)$ and the \emph{toric $g$-vector} is $g(P)=\bigl(g_0(P),\dots,g_{\lfloor n/2\rfloor}(P)\bigr)$, where
\[
  g_0(P)=h_0(P)=1,\qquad
  g_k(P)=h_k(P)-h_{k-1}(P)\quad\text{for } 1\le k\le\lfloor\tfrac{n}{2}\rfloor .
\]
\end{definition}

When $P$ is simplicial, $h(P)$ and $g(P)$ recover the classical simplicial $h$- and $g$-vectors, and the toric $g$-inequalities recalled below (Theorem~\ref{thm: toric g inequalities}) specialize to the $g$-theorem inequalities of Billera--Lee and Stanley. The point of the toric
generalization is that it extends these inequalities to \emph{every} convex polytope, simplicial or not.

Next we recall the definition of the $cd$-index of $P$ (see \cite{bayer-cd-survey} for an excellent survey on the topic).

\begin{definition}\label{def:ab}
Let $a,b$ be noncommuting indeterminates. For $S\subseteq\{0,1,\dots,n-1\}$ let $u_S=u_0u_1\cdots u_{n-1}$ be the word of length $n$ with
\[
  u_i=\begin{cases} b, & i\in S,\\ a, & i\notin S.\end{cases}
\]
The \emph{$ab$-index} of $P$ is the noncommutative homogeneous polynomial of degree $n$
\[
  \Psi_P(a,b) \;=\; \sum_{S\subseteq\{0,1,\dots,n-1\}} h_S(P)\, u_S .
\]
\end{definition}

By a theorem of Fine (unpublished, see \cite{BayerKlapper1991}), setting $c=a+b$ and
$d=ab+ba$, there is a unique polynomial $\phi_P$ in the noncommuting variables $c$ and $d$ such that $\Psi_P(a,b)=\phi_P(c,d)$. Moreover, $\phi_P$ is homogeneous of degree $n$ when $\deg c=1$ and $\deg d=2$.
\begin{definition}
The polynomial $\phi_P(c,d)$ that satisfies $\phi_P(c,d)=\Psi_P(a,b)$ is the \emph{$cd$-index} of $P$. Its monomials are the $cd$-words of degree $n$.
\end{definition}

It follows from a result of Bayer and Billera \cite{BayerBillera1985} that the number of $cd$-words of an $n$-polytope is exactly the $(n+1)$-th Fibonacci number $F_{n+1}$, where $F_1=F_2=1$ and $F_k=F_{k-1}+F_{k-2}$ for
$k\geq 3$. This number is also the dimension of the linear span of the flag vectors of $n$-polytopes.

In what follows, we consider a \emph{flag functional} of $P$, which is a rational linear combination of the flag numbers. By definition, the flag numbers can be encoded by the $cd$-index: each coefficient of the
$cd$-index is a linear combination of flag numbers. When $P$ is a
$15$-polytope, there are $F_{16}=987$ such words. The point of this representation is that it puts all the
inequalities used in the linear-programming certificate into a common
coordinate system. This idea goes back to \cite{Kalai}, and was made more
general and systematic in \cite{MKK-2000,Meisinger2000}. We adopt their
method and begin with two standard families of flag functionals that are
known to be nonnegative on every convex polytope. The first family is
known as the toric $g$-inequalities; it was first proved by Stanley for rational
polytopes using intersection cohomology \cite{Stanley1987} and then for all polytopes using Karu's hard Lefschetz theorem \cite{Karu2004}.

\begin{theorem}\label{thm: toric g inequalities}
    Let $P$ be an $n$-polytope. Then $g_i(P)\geq 0$ for all $0\leq i\leq\left\lfloor\frac n2\right\rfloor$.
\end{theorem}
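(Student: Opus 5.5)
This statement is the toric $g$-theorem, which the paper recalls rather than proves. Any proof has to pass through the algebraic machinery of Stanley and Karu: there is no elementary flag-vector argument. My plan is therefore to reduce nonnegativity of $g_i(P)$ to a hard Lefschetz property of a suitable graded module attached to $P$. I would treat the rational and the general cases in parallel.

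First I will identify the toric $h$-polynomial of Definition~\ref{def:toric} with a Poincaré polynomial. In the rational case I take the projective toric variety $X_{P^*}$ associated with the normal fan of a rational realization of the polar polytope. Stanley's computation shows that the intersection cohomology Betti numbers $\dim IH^{2i}(X_{P^*};\mathbb{Q})$ equal $h_i(P)$; the odd-degree groups vanish. The recursion defining $f(P,x)$ and $g(P,x)$ matches the local contributions of the singular strata. Concretely, each face $G$ contributes $g(G,x)$, which is the local intersection cohomology Poincaré polynomial of the corresponding normal slice, times the $(x-1)^{n-1-\dim G}$ factor coming from the open torus orbit. Verifying this match is an induction on dimension using the decomposition theorem, or equivalently the local purity of the stalks.

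Second, I invoke the hard Lefschetz theorem for intersection cohomology. Let $L$ be the class of an ample line bundle, which exists because $P$ gives a projective embedding. Multiplication by $L$, namely $L:IH^{2i-2}\to IH^{2i}$, is injective for $i\le\lfloor n/2\rfloor$, since $L^{n-2i+2}:IH^{2i-2}\to IH^{2n-2i+2}$ is an isomorphism. Consequently
\[
g_i(P)=h_i(P)-h_{i-1}(P)=\dim IH^{2i}-\dim IH^{2i-2}=\dim\bigl(IH^{2i}/L\,IH^{2i-2}\bigr)\ge 0 .
\]

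For nonrational $P$ no toric variety exists, so I replace $IH$ by the combinatorial intersection cohomology of Barthel--Brasselet--Fieseler--Kaup and Bressler--Lunts. This is the minimal extension sheaf on the fan, a sheaf of modules over piecewise polynomial functions, and its global sections form a graded module $\overline{\mathcal{L}}$ over the polynomial ring. One shows that $\overline{\mathcal{L}}$ has Hilbert function $h(P)$ by the same local-to-global recursion as above. The main obstacle is proving hard Lefschetz for $\overline{\mathcal{L}}$, which is Karu's theorem. I would follow his route: induct on dimension, and prove hard Lefschetz together with the Hodge--Riemann bilinear relations for the intersection pairing on the links. The Hodge--Riemann relations are what make the induction close. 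They are used to deduce Lefschetz for a strictly convex piecewise linear function on the whole fan from the corresponding statements on the stars of rays, using a signature and deformation argument in the ample cone. With that established, the same injectivity computation gives $g_i(P)\ge 0$ for all $0\le i\le\lfloor n/2\rfloor$, with $g_0(P)=1$ trivially.
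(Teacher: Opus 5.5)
Your proposal follows the same route the paper relies on. The paper does not prove this theorem itself; it cites Stanley's intersection-cohomology argument for rational polytopes and Karu's hard Lefschetz theorem for combinatorial intersection cohomology in general, and your reduction $g_i(P)=\dim\bigl(IH^{2i}/L\,IH^{2i-2}\bigr)\ge 0$ via injectivity of the Lefschetz map is exactly that argument.

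There are two small slips in your sketch. First, the graded object with Hilbert function $h(P)$ on which the Lefschetz operator acts is the reduction $\overline{\mathcal{L}}=\mathcal{L}(\Delta)/\mathfrak{m}\,\mathcal{L}(\Delta)$ of the free module $\mathcal{L}(\Delta)$ of global sections on the fan $\Delta$, not the global sections themselves. Second, in Karu's induction the signature/deformation argument belongs to the step that upgrades Lefschetz on the whole fan to Hodge--Riemann on the whole fan, so that the induction can continue; it is not part of deducing Lefschetz from the stars of rays.
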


The second family consists of $cd$-index inequalities. For an $n$-polytope
$P$, write its $cd$-index $\phi_P(c,d)=\sum_w\phi_w(P)w$, where $w$ ranges
over all $cd$-words. For example, since $h_\emptyset=1$ and only $c^n$ contributes to $u_{\emptyset}=a^n$, $\phi_{c^n}=1$. Also $h_{\{0\}}=f_0-1$ and only $c^n, dc^{n-2}$ contribute to $u_{\{0\}}=ba^{n-1}$, and thus $\phi_{dc^{n-2}}=f_0-2$. Likewise one can show that $\phi_{c^{n-2}d}=f_{n-1}-2$.  Billera and Ehrenborg
\cite[Theorem 5.3]{BilleraEhrenborg2000} give the following coefficientwise
simplex-minimality theorem.

\begin{theorem}\label{thm: cd minimality}
Let $P$ be an $n$-polytope. Then $\phi_w(P)-\phi_w(\sigma^n)\geq 0$.
\end{theorem}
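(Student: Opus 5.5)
The plan is to induct on $n$, passing through pyramids over facets. This is essentially the strategy of Billera and Ehrenborg. The base cases $n\le 2$ are immediate. For instance, for a polygon the only nontrivial coefficient is $\phi_{d}=f_0-2\ge 1=\phi_d(\sigma^2)$. For the inductive step I would fix a facet $F$ of $P$ and prove two coefficientwise inequalities:
\[
\phi_P\ \ge\ \phi_{\mathrm{Pyr}(F)}\ \ge\ \phi_{\mathrm{Pyr}(\sigma^{n-1})}=\phi_{\sigma^n}.
\]
Chaining them gives $\phi_w(P)-\phi_w(\sigma^n)\ge 0$ for every $cd$-word $w$.

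\textbf{Second inequality.} I would use the explicit formula expressing the $cd$-index of a pyramid linearly in terms of the $cd$-index of its base. Working at the level of flag numbers, the faces of $\mathrm{Pyr}(F)$ are the faces $G$ of $F$ together with the pyramids over them. This gives
\[
\phi_{\mathrm{Pyr}(F)}=\tfrac12\Bigl(\phi_F\,c+c\,\phi_F+\sum_{u} u'\,d\,u''\Bigr)
\]
for a suitable derivation-type term, namely a sum over factorizations $\phi_F=u'u''$ with $d$ inserted. The essential point is that the map $\phi_F\mapsto\phi_{\mathrm{Pyr}(F)}$ is linear and sends $cd$-monomials to nonnegative combinations of $cd$-monomials. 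Given that, the inductive hypothesis $\phi_F\ge\phi_{\sigma^{n-1}}$ coefficientwise transfers to the pyramids, and $\mathrm{Pyr}(\sigma^{n-1})=\sigma^n$.

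\textbf{First inequality.} This is the main obstacle. I would start from Stanley's shelling description of the $cd$-index. Take a line shelling $F=F_1,F_2,\dots,F_m$ of $\partial P$ that begins with $F$; Bruggesser–Mani guarantees one exists. Then $\phi_P$ is a sum of contributions, one per shelling step. The first facet contributes the same as the first step of the pyramid's natural shelling, which starts with the base $F$. Each later step $j$ attaches $F_j$ along an $(n-2)$-ball $B_j\subseteq\partial F_j$. Its contribution is a nonnegative combination, with monomial multipliers such as $\cdot\,c$ and $\cdot\,d$, of $cd$-indices of $\partial F_j$ and of the relative complex $(\partial F_j,B_j)$. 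The nonnegativity of these relative indices is Stanley's theorem for shellable regular CW spheres, which I would take as the key input.

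It then remains to show that the total contribution of steps $2,\dots,m$ for $P$ dominates the corresponding total for $\mathrm{Pyr}(F)$. In the pyramid, the remaining facets are exactly the pyramids $\mathrm{Pyr}(G)$ over the ridges $G$ of $F$. I would try to match each ridge $G\subset F$ with the facet $F_j$ of $P$ adjacent to $F$ across $G$. For such a facet, the inductive hypothesis in dimension $n-1$, in the stronger form $\phi_{F_j}\ge\phi_{\mathrm{Pyr}(G)}$ applied with $G$ as a facet of $F_j$, shows that its contribution dominates that of $\mathrm{Pyr}(G)$. All unmatched facets contribute nonnegatively. To make this work, the induction should carry the stronger statement ``$\phi_Q\ge\phi_{\mathrm{Pyr}(G)}$ for every polytope $Q$ and every facet $G$ of $Q$'', and the main bookkeeping risk is checking that the glueing balls $B_j$ line up under the matching. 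If the matching turns out to be awkward, my fallback is Ehrenborg–Readdy's coproduct $\Delta(\phi_P)=\sum_{G}\phi_G\otimes\phi_{P/G}$. Combined with nonnegativity of $\phi$ for all polytopes, it can serve in place of the shelling bookkeeping to reduce $\phi_P-\phi_{\mathrm{Pyr}(F)}$ to a sum of products of differences that are already nonnegative by induction.
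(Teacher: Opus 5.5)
\paragraph{Verdict.} The paper gives no proof of this statement; it cites Billera--Ehrenborg. Your proposal has a genuine gap: the inequality $\phi_P\ge\phi_{\mathrm{Pyr}(F)}$, which carries the whole theorem, is not established by the matching argument.

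\paragraph{What is fine.} The second half of your reduction is correct. The pyramid operator $\phi_{\mathrm{Pyr}\,Q}=\tfrac12(\phi_Q c+c\,\phi_Q+D\phi_Q)$, with $D$ the derivation $D(c)=2d$, $D(d)=cd+dc$, is positive on $cd$-monomials. The target inequality $\phi_P\ge\phi_{\mathrm{Pyr}(F)}$ is also true; it follows from the Billera--Ehrenborg monotonicity theorem.

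\paragraph{Where the argument fails.} In Stanley's decomposition, the contribution of step $j$ depends on the pair $(F_j,B_j)$, not on $\phi_{F_j}$ alone, and it drops as the attaching ball grows. Measured as increments of $\phi$ of the semisuspensions, the four steps of a shelling of $\partial\sigma^3$ contribute $c^3+dc$, $dc+cd$, $cd$, $0$. So $\phi_{F_j}\ge\phi_{\mathrm{Pyr}(G)}$ compares nothing unless the attaching sets correspond. In a shelling that starts with $F$ they do not:
\begin{itemize}
\item $B_j$ contains faces shared with earlier facets not adjacent to $F$, which have no counterpart in $\mathrm{Pyr}(F)$.
\item The order in which the neighbours of $F$ appear need not even be a shelling order of $\partial F$. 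Order the vertices of $\{|x|+|y|\le -z,\ |x|\le\varepsilon,\ z\ge -10\}$ ($0<\varepsilon<10$) by a generic perturbation of the height $z$. By polarity this is a line shelling of the polar polytope. It starts with the quadrilateral $F$ dual to the apex, then continues with the neighbours of $F$ across two \emph{opposite} edges $G_1,G_3$. In $\mathrm{Pyr}(F)$ the facet $\mathrm{Pyr}(G_3)$ would meet $F\cup\mathrm{Pyr}(G_1)$ in $G_3$ plus the apex, which is not a ball.
\end{itemize}
So the ``bookkeeping risk'' you flag is the missing idea, not a detail. The coproduct fallback cannot supply it: $\Delta(\phi_P)=\sum_G\phi_G\otimes\phi_{P/G}$ is an identity and by itself gives no inequality for $\phi_P$.

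\paragraph{A route where the steps align.} Use the polar form: prove $\phi_P\ge\phi_{\mathrm{Pyr}(P/v)}$ for a vertex $v$. This is your facet statement for $P^*$, since $\phi_{P^*}$ is the reversal of $\phi_P$ and $\mathrm{Pyr}(Q)^*\cong\mathrm{Pyr}(Q^*)$. It gives $\phi_P\ge\phi_{\mathrm{Pyr}(P/v)}\ge\phi_{\mathrm{Pyr}(\sigma^{n-1})}=\phi_{\sigma^n}$ by induction on the vertex figure. Take a line shelling in which the facets $F_1,\dots,F_r$ containing $v$ come first. Then:
\begin{itemize}
\item each $B_i$ with $i\le r$ is a union of faces through $v$;
\item $F_1/v,\dots,F_r/v$ is a shelling of $\partial(P/v)$;
\item in $\mathrm{Pyr}(P/v)$, the facet $\mathrm{Pyr}(F_i/v)$ is attached along the cone $\mathrm{Pyr}(B_i/v)$, so the steps correspond exactly;
\item the facets of $P$ missing $v$ contribute nonnegatively, while the base $P/v$, shelled last, contributes $0$.
\end{itemize}
Even so, you still have to show that the contribution of $(F_i,B_i)$ dominates that of $(\mathrm{Pyr}(F_i/v),\mathrm{Pyr}(B_i/v))$. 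That requires carrying a relative statement about such pairs through the induction, not the absolute statement you propose. This relative comparison is where the substance lies, and your proposal does not address it.
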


Finally, to obtain flag inequalities of higher dimensional polytopes based
on those among lower dimensional polytopes, we apply Kalai's convolution
construction \cite{Kalai1988}.

\begin{definition}
Let $u$ be a flag functional on $r$-polytopes and let $v$ be a flag
functional on $s$-polytopes. Their convolution is the flag functional on
$(r+s+1)$-polytopes defined by
\begin{equation}\label{eq: d15 convolution}
 (u*v)(P)=\sum_{\substack{F\in \mathcal{F}_r(P)}}u(F)v(P/F),
\end{equation}
where $P$ is an $(r+s+1)$-polytope and $P/F$ is the quotient polytope of
dimension $s$.
\end{definition}

In particular, if $u$ and $v$ are nonnegative, then so is $u*v$. On the
flag basis,
\begin{equation}\label{eq: d15 flag convolution}
 f_S*f_T=f_{S\cup\{r\}\cup(r+1+T)},
 \qquad r+1+T:=\{r+1+t:t\in T\}.
\end{equation}
See \cite{MKK-2000} for details on how to get a large collection of linear inequalities for flag numbers of polytopes (plus some others that are known independently).

\subsection{Our proof requires the answer to a linear program, but not its computation.}
\label{subsec: why an lp}

The proof strategy is as follows. For a polytope $P$, we generate flag
functionals that $P$ must satisfy using iterated convolution of toric
$g$-inequalities, the $cd$-index simplex-minimality inequalities, and their duals; we call these \emph{unconditional rows}. In addition, we also use \emph{conditional rows}, whose nonnegativity depends on the hypothetical counterexample $P$. They are of the form

\begin{equation}\label{eq: d15 left anchor}
 (H*G)(P)
 =
 \sum_{F\in \mathcal{F}_r(P)}H(F)G(P/F)
 \geq 0,
\end{equation}
where $H$ is a flag functional derived from condition
\eqref{eq: d15 counterexample}, and $G$ is unconditional. 
We call $H$ the
\emph{left anchor} of the conditional row. The order is essential because
the counterexample hypothesis is inherited by faces, but not by quotients.

Next, suppose $P$ is really a $15$-polytope satisfying \eqref{eq: d15 counterexample},
and let $x$ be its flag vector, written in $cd$-coordinates (equivalently, working modulo the generalized Dehn--Sommerville relations, which is what
lets us record a flag vector by only $987$ numbers instead of $2^{15}$).

Every flag functional $L$ we can prove is nonnegative on $P$ gives one linear inequality $L(x)\geq 0$ that $x$ must satisfy. This is true regardless of whether $L$ is
unconditionally true of all polytopes (Theorems~\ref{thm: toric g
inequalities} and~\ref{thm: cd minimality}, and their convolutions), or
true just for $P$, which is assumed to be a counterexample (the facet and vertex lower bounds established in
Section~\ref{subsec: counterexample inequalities}, and their convolutions). We also note that the single equality $\mathbf 1(x)=1$ states that $P$ is a genuine $15$-polytope. 

Now suppose that we exhibit finitely many such functionals $L_1,\dots,L_N$ and
nonnegative rationals $\lambda_1,\dots,\lambda_N$ satisfying, as an
identity of functionals (true for \emph{every} $cd$-coordinate vector $x$,
not just the one coming from $P$),
\begin{equation}\label{eq: why lp identity}
 \lambda_1L_1+\cdots+\lambda_NL_N=-\mathbf 1 .
\end{equation}
Evaluating \eqref{eq: why lp identity} at $P$'s flag vector, the left-hand
side is a nonnegative combination of nonnegative numbers, so it is
$\geq 0$; but the right-hand side is $-1$. This contradiction is the
core of the proof of Theorem~\ref{thm: d15 cap13}. We will finish once we find (and verify) an identity of the form
\eqref{eq: why lp identity}.

Finding such an identity is a linear feasibility question, thus it can be answered by solving a linear program: Collect a
large finite pool of candidate functionals $L$ built by convolving the
basic inequalities above in every admissible way, up to total grade $16$, place them as the rows of a matrix $A$ in $cd$-coordinates, and ask for
nonnegative $\lambda$ with $\lambda^{\mathsf T}A=-\mathbf 1$. 
By linear programming duality, such a $\lambda$ exists exactly when the system
$x_{c^{15}}=1,\ Ax\geq 0$ has \emph{no} solution $x$, i.e.,\ when no
candidate counterexample's flag vector can satisfy every inequality in the pool at once. This is a consequence of the duality theory of linear programming \cite{schrijver98}. We can test this by minimizing an auxiliary variable $s\geq 0$ subject to
$x_{c^{15}}=1$ and $Ax+s\mathbf 1\geq 0$: a strictly positive minimum certifies infeasibility (and its dual linear program solution supplies a candidate
$\lambda$), while a minimum of zero means the current pool of inequalities is not yet strong enough and prompts a search for further inequalities to add. 

The search process for the right pool of inequalities was iterated over several dimensions and thresholds until a positive minimum was reached.
 Ultimately, the search is a computer linear programming heuristic for \emph{finding} the $L_i$ and $\lambda_i$'s used in identity \eqref{eq: why lp identity}, but we only use the final verified output of the computational search, the actual search is not part of our proof.
 The search will be described in more detail later.
 
\subsection{Inequalities forced by a possible counterexample}
\label{subsec: counterexample inequalities}

Following \cite[Section~2.1]{MKK-2000}, the counterexample hypothesis yields two families of conditional 
inequalities, which come, respectively, from lower bounds on the number of facets and vertices of faces of $P$. In general, for fixed $m\geq3$, define
\[
 \varphi_k(m):=\min\{f_{k-1}(P): P\text{ is an }k\text{-polytope all of whose
 }3\text{-faces have at least }m\text{ facets}\},\]
 \[\psi_k(m):=\min\{f_0(P): P\text{ is an }k\text{-polytope all of whose
 }3\text{-faces have at least $m$ facets}\}.
 \]

In what follows, we establish the facet bounds for $m=14$ and then turn to the vertex bounds for $m=14$. We are following closely the method of \cite[Section~2.1]{MKK-2000}, but
we stress that our success in improving the results of \cite{MKK-2000} is precisely based on obtaining stronger convolution building blocks from the vertex bounds and facet bounds presented below. Computing power alone would not have been enough.

\begin{remark}
The vertex and facet bounds below are stated in the simplest form sufficient for the proof of Theorem~C. In fact, the authors have obtained sharper lower bounds on $\varphi_k(m)$ and $\psi_k(m)$ for all $m,k\geq 4$, but the improvement is not large enough to change the resulting threshold. These bounds are nevertheless of independent interest, and we expect that a sufficiently strong lower bound on $\varphi_k(m)$ or $\psi_k(m)$ would yield a corresponding improvement of Theorem~C.
\end{remark}
\subsubsection{Facet bounds} \label{facetbounds}

The assumption that a counterexample exists yields
\begin{equation}\label{eq: d15 CAP}
 f_2(F)-14\geq 0
\end{equation}
on every $3$-face $F$ of $P$.

\begin{lemma}\label{lm: d15 four face}
Every $4$-face $Q$ of $P$ has at least $15$ facets.
\end{lemma}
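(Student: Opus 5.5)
The plan is to count the facets of $Q$ that meet a single facet of $Q$ in a $2$-face.

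Let $Q$ be a $4$-face of $P$ and fix a facet $F$ of $Q$. Then $F$ is a $3$-face of $P$, so by \eqref{eq: d15 CAP} it has at least $14$ two-dimensional faces $G_1,\dots,G_t$ with $t=f_2(F)\ge 14$.

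\textbf{Step 1: each $G_i$ gives a neighbouring facet.} Each $G_i$ is a ridge of the $4$-polytope $Q$. By the diamond property of face lattices, every ridge lies in exactly two facets of $Q$. One of these is $F$; call the other $F_i$. In particular $F_i\neq F$.

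\textbf{Step 2: the facets $F_i$ are pairwise distinct.} Suppose $F_i=F_j$ for some $i\neq j$. Then $F\cap F_i$ is a face of $Q$ containing both $G_i$ and $G_j$. A face containing two distinct $2$-faces has dimension at least $3$. It is also contained in the $3$-face $F$, so it must equal $F$. That gives $F\subseteq F_i$, and since both are facets, $F=F_i$, contradicting Step 1.

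\textbf{Conclusion.} The facets $F,F_1,\dots,F_t$ of $Q$ are pairwise distinct. Hence
\[
f_3(Q)\ \ge\ 1+t\ \ge\ 15 .
\]

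The only point needing care is the injectivity in Step 2. It rests on two facts: the intersection of two faces is again a face, and every ridge of $Q$ lies in exactly two facets. Both are standard, so I do not expect a genuine obstacle in this argument.
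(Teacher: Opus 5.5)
Your proof is correct and follows the paper's argument: fix a facet $F$ of $Q$, treat its at least $14$ facets as ridges of $Q$, each lying in exactly one further facet of $Q$, and get distinctness because a repeated neighbour would meet $F$ in a face containing two distinct facets of $F$. Your Step 2 only spells out this injectivity (the intersection is forced to be all of $F$, hence $F_i=F$) a little more explicitly than the paper does.
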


\begin{proof}
Choose a facet $F$ of $Q$. By \eqref{eq: d15 counterexample}, $F$ has at
least $14$ facets. Each facet of $F$ is a ridge of $Q$, and hence lies in exactly one further facet of $Q$. These further facets are distinct: if one
of them contained two distinct facets of $F$, then its intersection with
$F$ would be a proper face of $F$ containing two facets of $F$. Together
with $F$, they give at least $15$ facets of $Q$.
\end{proof}

Thus, every $4$-face $Q$ of $P$ satisfies
\begin{equation}\label{eq: d15 K4}
 f_3(Q)-15\geq 0.
\end{equation}
The next lemma is the main geometric input for the facet bounds in higher
dimensions.

\begin{lemma}\label{lm: d15 five face}
Every $5$-face of $P$ has at least $24$ facets.
\end{lemma}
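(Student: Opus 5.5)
The plan is to prove the bound by exhibiting, inside a fixed $5$-face $R$ of $P$, many facets of $R$ that are forced to be distinct. The tools are the same ones as in Lemma~\ref{lm: d15 four face}: each ridge lies in exactly two facets, and a proper face cannot contain two distinct facets of a face it meets properly. The inputs are that every $3$-face of $R$ has at least $14$ facets by \eqref{eq: d15 counterexample}, and every $4$-face has at least $15$ facets by Lemma~\ref{lm: d15 four face}.

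\emph{Setup.} Fix a $3$-face $F$ of $R$. Since $F$ is a ridge of $R$, it lies in exactly two facets $Q_1,Q_2$ of $R$, and $F=Q_1\cap Q_2$. For each $2$-face $E$ of $F$ (there are at least $14$), the faces of $R$ containing $E$ correspond to the faces of the polygon $R/E$. In this polygon $Q_1$ and $Q_2$ are adjacent edges, corresponding to the vertex $F$. Let $A_E$ be the next facet after $Q_1$ around $R/E$, i.e.\ the facet of $R$ meeting $Q_1$ in the ridge of $R$ through $E$ other than $F$. Define $B_E$ in the same way on the $Q_2$ side.

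\emph{Distinctness.} First I would show that the $A_E$ are pairwise distinct. The set $A_E\cap Q_1$ is a facet of $Q_1$ containing $E$, and the argument of Lemma~\ref{lm: d15 four face} applied to $F\subset Q_1$ shows that distinct $E$ give distinct such facets. The same holds for the $B_E$. Next, none of the $A_E$ or $B_E$ equals $Q_1$ or $Q_2$. Finally, if $A_E=B_{E'}=:Q$ with $E\neq E'$, then $Q\cap F$ is a face of $F$ containing two distinct facets $E,E'$ of $F$, so $Q\supseteq F$, which is impossible. Hence the only possible coincidences are $A_E=B_E$, and this happens exactly when $R/E$ is a triangle, i.e.\ when $E$ lies in exactly three facets of $R$. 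Writing $t(F)$ for the number of such $2$-faces of $F$, this gives
\[
f_4(R)\ \ge\ 2+2f_2(F)-t(F)\ \ge\ 30-t(F).
\]
So it suffices to find a $3$-face $F$ of $R$ with $t(F)\le 6$.

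\emph{Main obstacle: finding such an $F$.} Controlling $t(F)$ is where I expect the difficulty. My first attempt is a counting argument over the whole of $R$. A $2$-face $E$ with $R/E$ a triangle lies in exactly three facets and exactly three $3$-faces of $R$. I would double count pairs ($3$-face $F$, $2$-face $E\subset F$ with $R/E$ a triangle) against all pairs $(F,E)$ with $E\subset F$. The goal is to show that the triangular-link $2$-faces are a minority, using that every other $2$-face lies in at least four $3$-faces. This would give an $F$ with small $t(F)$ by averaging.

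If averaging is too weak, the fallback is to argue directly from a facet $Q=A_E=B_E$ of triangle type. The three facets $Q_1,Q_2,Q$ pairwise meet in $3$-faces through $E$. I would then rerun the distinctness count starting from the pair $(Q,Q_1)$ instead of $(Q_1,Q_2)$. Each of these facets has at least $15$ facets by Lemma~\ref{lm: d15 four face}, and I expect combining the neighbour sets of $Q_1,Q_2,Q$ with the same "no face contains two facets of $F$" argument to recover the missing count up to $24$. I expect this case analysis on triangle-type $2$-faces to be the crux of the proof.
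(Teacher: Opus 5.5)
\textbf{There is a genuine gap: the step you identify as the main obstacle is the substance of the lemma, and neither of your proposed ways of closing it can work with the tools you list.}

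Your first half is sound. Apart from the coincidences $A_E=B_E$, the facets $Q_1,Q_2,A_E,B_E$ are pairwise distinct, so $f_4(R)\ge 2+2f_2(F)-t(F)$.

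\emph{The averaging argument.} Let $T$ be the number of $2$-faces of $R$ lying in exactly three facets, and let $d(E)\ge 4$ be the number of $3$-faces through each remaining $2$-face $E$. Double counting gives
\[
\sum_F t(F)=3T,\qquad \sum_F f_2(F)=3T+\sum_E d(E),
\]
so the average of $2f_2(F)-t(F)$ over $3$-faces is $\bigl(3T+2\sum_E d(E)\bigr)/f_3(R)$. To push this average to $22$ you need a lower bound on the non-triangle-type term, and nothing local supplies one. These identities, together with $f_2(F)\ge 14$, are perfectly consistent with $T=f_2(R)$.

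\emph{The extreme case defeats both routes.} Suppose every $2$-face of $R$ lies in exactly three facets, i.e.\ every $2$-face of the polar $R^*$ is a triangle. Then $t(F)=f_2(F)$ for every $3$-face, and your count collapses to $f_4(R)\ge 2+f_2(F)\ge 16$, which is just the ridge bound. The fallback gives nothing more here:
\begin{itemize}
\item Rerunning the count at the ridge $Q\cap Q_1$ produces the same degenerate bound.
\item In $R^*$, the neighbour sets of $Q_1,Q_2,Q$ are the neighbourhoods of the vertices of a triangle $uvw$. Every edge lies in at least $14$ triangles, so adjacent vertices have at least $14$ common neighbours.
\item Your local facts (ridges lie in two facets, no proper face contains two facets of $F$, $3$-faces have $\ge 14$ facets, $4$-faces have $\ge 15$) do not prevent these three neighbourhoods from overlapping almost completely. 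So nothing forces $24$.
\end{itemize}

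\textbf{What is missing is a global inequality, quadratic in the number of facets, that treats triangular and non-triangular $2$-faces uniformly.} The paper passes to $R^*$, which has $n=f_4(R)$ vertices, and combines three facts:
\begin{itemize}
\item summing the $3$-face hypothesis over the edges of $R^*$ gives $f_{12}(R^*)\ge 14f_1(R^*)$;
\item vertex degrees are at least $15$ by Lemma~\ref{lm: d15 four face}, so $f_1(R^*)\ge\lceil 15n/2\rceil$;
\item the Billera--Ehrenborg flag upper bound theorem gives $f_{12}(R^*)\le f_{12}(C(n,5))=6(n^2-6n+10)$.
\end{itemize}
The resulting inequality $14\lceil 15n/2\rceil\le 6(n^2-6n+10)$ fails for $16\le n\le 23$. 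Together with $n\ge 16$ from the ridge argument, this forces $n\ge 24$. Some such global input is what your local count lacks.
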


\begin{proof}
Let $Q$ be a $5$-face of $P$, let $R=Q^*$, and put
$n=f_0(R)=f_4(Q)$. A $3$-face of $Q$ is dual to an edge of $R$, and the
facets of that $3$-face correspond to the $2$-faces of $R$ containing the
edge. Summing \eqref{eq: d15 counterexample} over all edges of $R$ gives
\begin{equation}\label{eq: d15 f12 lower}
 f_{\{1,2\}}(R)\geq 14f_1(R).
\end{equation}
Every vertex of $R$ is dual to a $4$-dimensional facet of $Q$, so
Lemma~\ref{lm: d15 four face} shows that its degree is at least $15$.
Consequently,
\begin{equation}\label{eq: d15 edge lower}
 f_1(R)\geq \left\lceil\frac{15n}{2}\right\rceil.
\end{equation}
On the other hand, the flag upper bound theorem
\cite[Theorem 6.5 and Corollary 6.6]{BilleraEhrenborg2000} gives
\begin{equation}\label{eq: d15 f12 upper}
 f_{\{1,2\}}(R)\leq f_{\{1,2\}}(C(n,5))=6(n^2-6n+10),
\end{equation}
where $C(n,5)$ is the cyclic $5$-polytope with $n$ vertices. For the last
equality, recall that $C(n,5)$ is simplicial,
$f_2(C(n,5))=2(n^2-6n+10)$, and each triangular $2$-face contributes three
edge--face incidences.

Lemma~\ref{lm: d15 four face}, followed by the ridge argument in its proof,
first gives $n\geq 16$. Combining
\eqref{eq: d15 f12 lower}--\eqref{eq: d15 f12 upper}, we obtain
\begin{equation}\label{eq: d15 n test}
 14\left\lceil\frac{15n}{2}\right\rceil\leq 6(n^2-6n+10).
\end{equation}
For $16\leq n\leq 23$, the left-hand side minus the right-hand side in
\eqref{eq: d15 n test} is
\[
 \begin{cases}
 141n-6n^2-60,&n\text{ even},\\
 141n-6n^2-53,&n\text{ odd}.
 \end{cases}
\]
Both expressions decrease in this interval. Their values in the highest
even and odd integers are, respectively, $138$ at $n=22$ and $16$ at
$n=23$. Thus \eqref{eq: d15 n test} fails throughout the interval, and
$n\geq 24$.
\end{proof}

\begin{corollary}\label{cor: d15 KUBT}
Every $r$-face of $P$, where $5\leq r\leq 15$, has at least $r+19$ facets.
\end{corollary}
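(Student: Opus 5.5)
Induction on $r$, with Lemma~\ref{lm: d15 five face} as the base case $r=5$ ($24=5+19$).

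For the inductive step, let $5<r\le 15$ and let $Q$ be an $r$-face of $P$. Assume every $(r-1)$-face of $P$ has at least $(r-1)+19=r+18$ facets. Choose any facet $F$ of $Q$. Since $F$ is an $(r-1)$-face of $P$, it has at least $r+18$ facets.

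Now repeat the ridge argument from the proof of Lemma~\ref{lm: d15 four face}. Each facet $G$ of $F$ is a ridge of $Q$, so it lies in exactly two facets of $Q$: $F$ itself and exactly one other facet $F_G$. The facets $F_G$ are pairwise distinct. Indeed, suppose $F_G=F_{G'}$ with $G\neq G'$. Then $F\cap F_G$ is a face of $F$ containing both $G$ and $G'$. It is proper, because $F_G\neq F$. But a proper face of $F$ contains at most one facet of $F$, a contradiction. So $Q$ has at least $f_{r-2}(F)+1\ge r+19$ facets, which closes the induction.

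All faces involved exist because $r\le 15=\dim P$, and the case $r=15$ is $P$ itself. No step is a real obstacle. The only point to check is that the ridge argument used for Lemma~\ref{lm: d15 four face} works in every dimension; it does, since it uses only the diamond property of the face lattice.
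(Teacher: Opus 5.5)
Your proof is correct and matches the paper's argument: Lemma~\ref{lm: d15 five face} is the base case, and the ridge argument shows that a polytope has at least one more facet than any of its facets, so induction gives $24+(r-5)=r+19$. Your justification that the facets $F_G$ are distinct is sound. It is in fact slightly more explicit than the one-line version in the paper.
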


\begin{proof}
The case $r=5$ is Lemma~\ref{lm: d15 five face}. If a facet $H$ of an
$r$-polytope has $m$ facets, then its $m$ ridges lie in $m$ distinct facets
other than $H$. The ambient polytope therefore has at least $m+1$ facets,
and induction on $r$ gives $24+(r-5)=r+19$.
\end{proof}

Corollary~\ref{cor: d15 KUBT} gives
\begin{equation}\label{eq: d15 KUBT}
 f_{r-1}-(r+19)\geq 0,
 \qquad 5\leq r\leq 15,
\end{equation}
on every $r$-face of $P$.

\subsubsection{Vertex bounds}

We also need lower bounds on the number of vertices of faces of $P$. 
We first derive them under the more general assumption that every 
$3$-face has at least $q$ facets, where $q\geq 5$. Euler's relation 
and $2f_1\geq 3f_2$ imply that a $3$-polytope with at least $q$ facets has at least
\begin{equation}\label{eq: d15 nq}
 n_q:=\left\lceil\frac{q+4}{2}\right\rceil
\end{equation}
vertices. For an integer $a$, set
\begin{equation}\label{eq: d15 Kq}
 s_q(a):=\min\left\{5,
 \left\lfloor 2+\frac{2(a-2)}q\right\rfloor\right\},
 \qquad
 K_q(a):=\left\lceil\frac{q-s_q(a)+1}{2}\right\rceil.
\end{equation}

\begin{lemma}\label{lm: d15 crossing facet}
Let $H$ be a facet of an $r$-face $Q$ of $P$, where $r\geq 4$, and suppose
that $H$ has $a$ vertices. Then $Q$ has at least $a+K_q(a)$ vertices.
\end{lemma}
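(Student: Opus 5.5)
The plan is to find a single $3$-face $F'$ of $Q$ that meets $H$ in a $2$-face $G$ with at most $s_q(a)$ vertices. Then $F'$ has at least $K_q(a)$ vertices off $G$, hence off $H$. Throughout, every $3$-face of $P$ (so of $Q$ and of $H$) has at least $q$ facets.

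\emph{Step 1: a small $2$-face of $H$.} Since $r\ge 4$, $H$ has dimension at least $3$, so we can pick a $3$-face $F$ of $H$. Then $f_0(F)\le a$ and $f_2(F)\ge q$. By Euler's relation $f_1(F)=f_0(F)+f_2(F)-2$, the average number of edges of a $2$-face of $F$ is
\[
\frac{2f_1(F)}{f_2(F)}=2+\frac{2(f_0(F)-2)}{f_2(F)}\le 2+\frac{2(a-2)}{q}.
\]
So some $2$-face has at most $\lfloor 2+2(a-2)/q\rfloor$ vertices. Separately, every $3$-polytope has a $2$-face with at most $5$ edges. Hence $F$, and therefore $H$, has a $2$-face $G$ with $s:=f_0(G)\le s_q(a)$.

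\emph{Step 2: a $3$-face of $Q$ leaving $H$ through $G$.} Consider the quotient $Q/G$, a polytope of dimension $r-3\ge 1$. In it, $H/G$ is a facet, so $Q/G$ has a vertex outside $H/G$. That vertex corresponds to a $3$-face $F'$ of $Q$ with $G\subset F'$ and $F'\not\subseteq H$. Then $F'\cap H$ is a proper face of $F'$ containing the $2$-face $G$, so $F'\cap H=G$. In particular, the vertices of $F'$ not in $G$ are vertices of $Q$ not in $H$.

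\emph{Step 3: counting vertices of $F'$ off a facet.} Let $t=f_0(F')-s$. The graph of $F'$ is planar, and the $s$-gon $G$ bounds a face that can be triangulated by $s-3$ extra diagonals. Therefore $f_1(F')\le 3f_0(F')-6-(s-3)$. With Euler's relation this gives
\[
q\le f_2(F')=f_1(F')-f_0(F')+2\le 2f_0(F')-s-1=s+2t-1,
\]
so $t\ge \lceil (q-s+1)/2\rceil\ge K_q(a)$, using $s\le s_q(a)$. Thus $Q$ has at least $a+K_q(a)$ vertices.

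The only delicate points are the two ways of producing a small $2$-face in Step 1 and the existence of $F'$ in Step 2. The quotient argument in Step 2 handles the latter, including the edge case $r=4$, where $Q/G$ is a segment.
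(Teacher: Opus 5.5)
Your proposal is correct and follows the paper's proof essentially step for step: a small $2$-face $G$ of a $3$-face of $H$ via the averaging bound and the five-vertex bound, then a $3$-face through $G$ that leaves $H$, obtained from a vertex of the face figure $Q/G$ off the facet $H/G$, then the bound $f_2\le s+2t-1$. The only difference is cosmetic: you get $f_2(F')\le s+2t-1$ by triangulating the $s$-gon in the planar graph, while the paper counts edge--facet incidences, $2f_1\ge s+3(f_2-1)$. These are dual forms of the same inequality.
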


\begin{proof}
Choose a $3$-face $F$ of $H$, and write $b=f_0(F)$ and $m=f_2(F)\geq q$.
The average number of vertices of a $2$-face of $F$ is
\[
 \frac{2f_1(F)}m=2+\frac{2(b-2)}m
 \leq 2+\frac{2(a-2)}q.
\]
Every $3$-polytope also has a $2$-face with at most five vertices, by
applying the planar minimum-degree bound to its polar. Hence $F$ contains a
$2$-face $G$ with $s=f_0(G)\leq s_q(a)$.

The face figure $Q/G$ has the facet $H/G$, and therefore has a vertex
outside that facet. This vertex corresponds to a $3$-face $E$ of $Q$ such
that $E\cap H=G$. Let $t$ be the number of vertices of $E$ outside $H$. The
polygon $G$ is one facet of $E$ and has $s$ edges, while every other facet
of $E$ has at least three edges. Counting edge--facet incidences and using
Euler's relation gives
\[
 2f_1(E)\geq s+3(f_2(E)-1),
 \qquad f_2(E)\leq s+2t-1.
\]
Since $f_2(E)\geq q$, it follows that
\[
 t\geq \left\lceil\frac{q-s+1}{2}\right\rceil\geq K_q(a).
\]
The vertices of $H$ and the $t$ vertices outside it are distinct, proving
the claim.
\end{proof}

There is a second bound obtained by considering several $3$-faces through a
common $2$-face. Put $t=r-3$ and define
\begin{equation}\label{eq: d15 Sr}
 S_r(q):=\min\left\{
 n_q+t(n_q-3),\ q+2+t(n_q-4),\
 \left\lceil\frac{3q}{2}+2\right\rceil+t(n_q-5)
 \right\}.
\end{equation}

\begin{lemma}\label{lm: d15 common face}
Every $r$-face $Q$ of $P$, where $r\geq 4$, has at least $S_r(q)$ vertices.
\end{lemma}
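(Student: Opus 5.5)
The plan is to fix a $2$-face $G$ of $Q$ with as few vertices as possible, say $s=f_0(G)$, and count vertices of $Q$ over a family of $3$-faces of $Q$ that all contain $G$. The three terms in $S_r(q)$ should correspond to the three possible values $s\in\{3,4,5\}$.

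First I would show $s\le 5$. Take any $3$-face $F$ of $Q$. As in the proof of Lemma~\ref{lm: d15 crossing facet}, every $3$-polytope has a $2$-face with at most five vertices, and that $2$-face is also a $2$-face of $Q$.

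By minimality of $s$, every $2$-face of every $3$-face $E$ of $Q$ has at least $s$ edges, so $2f_1(E)\ge s f_2(E)$. Combining this with Euler's relation $f_0-f_1+f_2=2$ and the hypothesis $f_2(E)\ge q$ gives a lower bound on $f_0(E)$ for each $s$:
\begin{itemize}
\item for $s=3$, we get $f_0(E)\ge n_q$, which is \eqref{eq: d15 nq};
\item for $s=4$, we get $f_0(E)=f_1-f_2+2\ge f_2+2\ge q+2$;
\item for $s=5$, we get $f_0(E)\ge \tfrac32 f_2+2$, hence $f_0(E)\ge\lceil 3q/2+2\rceil$.
\end{itemize}
Independently of $s$, every $3$-face of $Q$ has at least $n_q$ vertices.

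Next I would produce $t+1=r-2$ distinct $3$-faces $E_0,\dots,E_t$ of $Q$ containing $G$. The face figure $Q/G$ is an $(r-3)$-polytope, so it has at least $r-2$ vertices. These vertices correspond bijectively to the $3$-faces of $Q$ containing $G$.

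For $i\neq j$, the intersection $E_i\cap E_j$ is a face of $E_i$ that contains the facet $G$ of $E_i$. It is not all of $E_i$, since $E_i\neq E_j$ and both have dimension $3$. Hence $E_i\cap E_j=G$. Consequently the sets $V(E_i)\setminus V(G)$ are pairwise disjoint and disjoint from $V(G)$.

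Counting then gives
\[
f_0(Q)\ \ge\ f_0(E_0)+\sum_{i=1}^{t}\bigl(f_0(E_i)-s\bigr)\ \ge\ B_s+t(n_q-s),
\]
where $B_3=n_q$, $B_4=q+2$ and $B_5=\lceil 3q/2+2\rceil$. These three right-hand sides are exactly the three expressions in \eqref{eq: d15 Sr}. Since $s$ is one of $3,4,5$, the minimum $S_r(q)$ is a valid lower bound in every case.

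The step needing the most care is the disjointness claim $E_i\cap E_j=G$ together with the correspondence between vertices of $Q/G$ and $3$-faces containing $G$. Both are standard face-lattice facts (intervals $[G,Q]$ in the face lattice of $Q$ are the face lattice of $Q/G$), so I expect no real obstacle there.

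The other point to get right is that the better bounds $B_4$ and $B_5$ may only be applied to $E_0$, while $E_1,\dots,E_t$ are bounded using $n_q$. Both bounds come from choosing $s$ minimal over all $2$-faces of $Q$, not just over the $2$-faces of one $3$-face.
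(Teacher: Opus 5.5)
Your proof is correct. It shares the paper's skeleton: a $2$-face $G$ with $s\le 5$ vertices, the at least $r-2$ $3$-faces through $G$ read off from the vertices of $Q/G$ and meeting pairwise exactly in $G$, and a case split over $s\in\{3,4,5\}$. Where you differ is in how $G$ is chosen.

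The paper fixes an arbitrary $3$-face $F$ with $b$ vertices and picks $G\subseteq F$ by averaging, so that $s\le s_q(b)$. It then minimizes $b+t(n_q-s_q(b))$ over the three ranges of $b$ where $s_q(b)=3,4,5$; their smallest elements are $n_q$, $q+2$ and $\lceil 3q/2+2\rceil$. You instead take $G$ of globally minimal size in $Q$ and read the same Euler--averaging inequality the other way round ($s\le 2+2(b-2)/q$ iff $b\ge 2+(s-2)q/2$). This gives $f_0(E)\ge B_s$ for \emph{every} $3$-face $E$ of $Q$, and it dispenses with $s_q$ and the range analysis.

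Your choice also gives more than you use. Your closing remark that $B_4$ and $B_5$ may only be applied to $E_0$ is mistaken: under your choice of $G$ they apply to all the $E_i$. Using them throughout yields $f_0(Q)\ge s+(t+1)(B_s-s)$, which is smallest at $s=3$, so $f_0(Q)\ge n_q+t(n_q-3)$. For $q=14$ this is $9+6t$. It coincides with $S_r(14)$ for $r\le 10$ but is strictly larger for $r\ge 11$: $57$ versus $55$ at $r=11$, and $81$ versus $71$ at $r=15$. It even exceeds the paper's $M_r(14)$ in that range. The paper's local choice cannot give this, because there $G$ need not be a smallest $2$-face of the other $3$-faces through it, so only $n_q$ is available for them.
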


\begin{proof}
Choose a $3$-face $F$ of $Q$, write $b=f_0(F)$, and choose a $2$-face $G$
of $F$ with $s=f_0(G)\leq s_q(b)$. The face figure $Q/G$ has dimension
$r-3$, and therefore has at least $r-2$ vertices. They correspond to
$r-2$ distinct $3$-faces through $G$, any two of which meet precisely in
$G$. Besides $F$, at least $t=r-3$ such faces remain. Each has at least
$n_q$ vertices, so their union has at least
\[
 b+t(n_q-s_q(b))
\]
vertices. The function $s_q(b)$ takes the values $3,4,5$. The smallest
possible values of $b$ in the three corresponding ranges are respectively
$n_q$, $q+2$, and $\lceil3q/2+2\rceil$. Minimizing over these three ranges
gives \eqref{eq: d15 Sr}.
\end{proof}

Define recursively
\begin{equation}\label{eq: d15 Mr}
 M_3(q):=n_q,
 \qquad
 M_r(q):=\max\left\{
 M_{r-1}(q)+K_q(M_{r-1}(q)),\ S_r(q)
 \right\}.
\end{equation}

\begin{proposition}\label{prop: d15 vertices}
If every $3$-face of $P$ has at least $q$ facets, then every $r$-face of
$P$, where $3\leq r\leq 15$, has at least $M_r(q)$ vertices. For $q=14$,
\begin{equation}\label{eq: d15 M values}
 \bigl(M_3(14),\ldots,M_{15}(14)\bigr)
 =(9,15,21,27,33,39,45,51,56,61,66,71,76).
\end{equation}
\end{proposition}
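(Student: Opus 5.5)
The proof is an induction on $r$. Every $r$-face $Q$ of $P$ (with $r\ge4$) has a facet $H$, and $H$ is an $(r-1)$-face of $P$, so the hypothesis on $3$-faces passes down to all faces. Write $B(r)$ for the statement ``every $r$-face of $P$ has at least $M_r(q)$ vertices''.

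\emph{Base case $r=3$.} Every $3$-face $F$ has $f_2(F)\ge q$ facets. Euler's relation together with $2f_1\ge 3f_2$ gives $f_0(F)=2+f_1(F)-f_2(F)\ge 2+f_2(F)/2\ge 2+q/2$, which is exactly $n_q$ in \eqref{eq: d15 nq}.

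\emph{Inductive step, $r\ge4$.} Pick any facet $H$ of $Q$ and let $a=f_0(H)$. By induction, $a\ge M_{r-1}(q)$. Lemma~\ref{lm: d15 crossing facet} then gives $f_0(Q)\ge a+K_q(a)$. To replace $a$ by $M_{r-1}(q)$, I use that $a\mapsto a+K_q(a)$ is nondecreasing. Indeed, $s_q(a)$ is nondecreasing in $a$ and takes values in $\{3,4,5\}$, so $K_q(a)$ is nonincreasing and drops by at most one at each of the (at most two) points where $s_q$ increases. Hence $a+K_q(a)$ cannot decrease when $a$ grows by one. This gives $f_0(Q)\ge M_{r-1}(q)+K_q(M_{r-1}(q))$. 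Separately, Lemma~\ref{lm: d15 common face} gives $f_0(Q)\ge S_r(q)$. Taking the maximum of the two bounds yields $M_r(q)$ as defined in \eqref{eq: d15 Mr}, which proves $B(r)$.

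\emph{The numerical values for $q=14$.} The main work is a routine but error-prone computation, which I would tabulate step by step. Here $n_{14}=9$, $s_{14}(a)=\min\{5,\lfloor 2+(a-2)/7\rfloor\}$, and $K_{14}(a)=\lceil(15-s_{14}(a))/2\rceil$. So $K=6$ while $s=3$ (that is, $9\le a\le15$), $K=6$ while $s=4$ (that is, $16\le a\le 22$), and $K=5$ once $a\ge 23$. The Lemma~\ref{lm: d15 common face} bound is $S_r(14)=\min\{9+6t,\ 16+5t,\ 23+4t\}$ with $t=r-3$.

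Running the recursion from $M_3=9$:
\begin{itemize}
\item the crossing-facet term gives $9\to15\to21\to27\to33\to39\to45\to51$ for $r=4,\dots,10$, adding $6$ each time since the previous value is at most $22$ at the early steps;
\item the values $27, 33, \dots$ are at least $23$, so $K$ should then be $5$. Reconciling this with the claimed list needs care: the crossing-facet term alone gives $21+5=26$ at $r=6$, while $S_6(14)=\min\{27,31,35\}=27$. So the common-face bound is what supplies the increments of $6$ for $r\le 10$. Checking $r=10$: $S_{10}=\min\{51,51,51\}=51$;
\item for $r\ge 11$ the $S_r$ term grows only by $4$ per step, while the crossing-facet term gives $51+5=56$, $61$, $66$, $71$, $76$, matching \eqref{eq: d15 M values}.
\end{itemize}

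The step I expect to be the main obstacle is this tabulation: tracking at each $r$ which of the two bounds dominates, and evaluating $K_q$ at the correct argument. The monotonicity of $a+K_q(a)$ is the one conceptual point that needs stating explicitly.
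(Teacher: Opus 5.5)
Your argument is correct and is the paper's own proof: induction through a facet, Lemma~\ref{lm: d15 crossing facet} combined with the monotonicity of $a\mapsto a+K_q(a)$, Lemma~\ref{lm: d15 common face}, and then evaluation at $q=14$. There is one slip in your tabulation, though it does not change the final list: at step $r$ the argument of $K_{14}$ is $M_{r-1}$, so at $r=6$ the crossing-facet term is $21+K_{14}(21)=27$ (not $26$), and this term ties with $S_r$ for $r\le 6$ and adds only $5$ from $r=7$ on, so contrary to your first bullet it is $S_r=9+6(r-3)$ that strictly dominates for $7\le r\le 10$ ($32<33,\ 38<39,\ 44<45,\ 50<51$).
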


\begin{proof}
The case $r=3$ is \eqref{eq: d15 nq}. For $r\geq 4$, choose a facet $H$
of the given $r$-face. By induction, $f_0(H)\geq M_{r-1}(q)$. Since
$s_q(a+1)-s_q(a)\in\{0,1\}$, the function $a\mapsto a+K_q(a)$ is
nondecreasing. Lemma~\ref{lm: d15 crossing facet} therefore gives the first
term in the maximum in \eqref{eq: d15 Mr}, while
Lemma~\ref{lm: d15 common face} gives the second. This proves the
recurrence by induction. Substituting $q=14$ gives \eqref{eq: d15 M values}.
\end{proof}

The final conditional inequalities are the vertex bounds
\begin{equation}\label{eq: d15 V}
 f_0-M_r(14)\geq 0,
 \qquad 3\leq r\leq 15,
\end{equation}
on every $r$-face of $P$.

\subsection{The exact certificate and the proof of Theorem \ref{thm: d15 cap13}}
\label{subsec: exact certificate}

For an $r$-polytope, the $cd$-index satisfies
\[
 \phi_{dc^{r-2}}=f_0-2,
 \qquad
 \phi_{c^{r-2}d}=f_{r-1}-2.
\]
Consequently, the four types of conditional factors used in the
certificate have the following coordinates in the $cd$-quotient.

\begin{center}
\begin{tabular}{@{}lll@{}}
\toprule
bound & flag form & $cd$-quotient form \\
\midrule
facet bound, $r=3$ & $f_2-14$ &
$\phi_{cd}-12\phi_{c^3}$ \\
facet bound, $r=4$ & $f_3-15$ &
$\phi_{c^2d}-13\phi_{c^4}$ \\
facet bound, $5\leq r\leq 15$ & $f_{r-1}-(r+19)$ &
$\phi_{c^{r-2}d}-(r+17)\phi_{c^r}$ \\
vertex bound, $3\leq r\leq 15$ & $f_0-M_r(14)$ &
$\phi_{dc^{r-2}}-(M_r(14)-2)\phi_{c^r}$ \\
\bottomrule
\end{tabular}
\end{center}

The certificate records the four rows of this table under the internal
tags \texttt{CAP}, \texttt{K4}, \texttt{KUBT}, and \texttt{V}, respectively.
These tags serve only to identify the corresponding conditional factors.

The flag functionals allowed in the certificate are of two types. 
They are either unconditional rows or conditional rows of the form $H*G$, where $H$ is one of the conditional factors in the table and 
$G$ is an iterated convolution $G_1*\cdots*G_m$ of primitive unconditional factors of the complementary dimension. The convolution $G$ may also 
be empty. By \eqref{eq: d15 left anchor}, every row of either type is nonnegative on a polytope satisfying \eqref{eq: d15 counterexample}. Consequently, so is every positive combination of such rows, and (as explained in Section~\ref{subsec: why an lp}) it suffices to explicitly exhibit a positive combination that equals the constant functional $-\mathbf{1}$. 

\begin{proposition}[Exact rational certificate]
\label{prop: d15 certificate}
There are $987$ flag functionals $L_1,\ldots,L_{987}$ of these two types
and strictly positive rational numbers $\lambda_1,\ldots,\lambda_{987}$
such that the following identity of flag functionals holds:
\begin{equation}\label{eq: d15 certificate}
  \sum_{i=1}^{987}\lambda_iL_i=-\mathbf 1.
\end{equation}
\end{proposition}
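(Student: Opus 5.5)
The proposition is an existence statement about an explicit finite object, so the proof will be constructive: exhibit the certificate and verify it by exact rational arithmetic. The plan has four steps.

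First, fix the coordinate system. Every flag functional on $15$-polytopes is represented modulo the generalized Dehn--Sommerville relations as a vector in $\mathbb{Q}^{987}$, indexed by the $cd$-words of degree $15$. The functional $-\mathbf 1$ is the vector with $-1$ in the $c^{15}$ coordinate and $0$ elsewhere. To express each candidate row $L_i$ in these coordinates:
\begin{itemize}
\item Write each primitive factor in its own dimension in $cd$-coordinates. These factors are the toric $g_i$ (Theorem~\ref{thm: toric g inequalities}), the shifted coefficients $\phi_w-\phi_w(\sigma^n)$ (Theorem~\ref{thm: cd minimality}), and the conditional anchors \texttt{CAP}, \texttt{K4}, \texttt{KUBT}, \texttt{V} from the table.
\item Pass to flag $f$-coordinates, apply the convolution rule \eqref{eq: d15 flag convolution}, and convert back to the $cd$-basis of dimension $15$.
\end{itemize}
Each such row is nonnegative on a counterexample $P$. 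For unconditional rows this follows from the cited theorems and the fact that convolution preserves nonnegativity. For conditional rows $H*G$ it follows from \eqref{eq: d15 left anchor}, because the hypothesis \eqref{eq: d15 counterexample} passes to faces, which is where the anchor $H$ is evaluated.

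Second, find the certificate. Assemble a large pool of such rows into a matrix $A$ and solve the phase-one LP $\min s$ subject to $x_{c^{15}}=1$ and $Ax+s\mathbf 1\ge 0$. A positive optimum gives a dual solution $\lambda\ge 0$ with $\lambda^{\mathsf T}A=-\mathbf 1$, up to scaling. A vertex (basic) dual solution is supported on at most $987$ rows, one per equality constraint, which explains the count in the statement. Rows with $\lambda_i=0$ are discarded, and the remainder is padded or reselected so that exactly $987$ rows carry strictly positive weights.

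Third, make the certificate exact. The LP solver returns floating-point values, so fix the basis it identifies, that is, the set of $987$ supporting rows. Then solve the square linear system $\lambda^{\mathsf T}A_B=-\mathbf 1$ exactly over $\mathbb{Q}$, and check two things: that $A_B$ is nonsingular, and that every entry of $\lambda$ is strictly positive. This step constitutes the proof: the exact check of \eqref{eq: d15 certificate} is the verification, and it is independent of how the rows were found. I will publish the rows, encoded by their factorization into primitive factors and anchors, together with the rational $\lambda_i$, so that anyone can recompute the identity.

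The main obstacle is the second step, getting the LP to be infeasible at all. With only the weak bounds of \cite{MKK-2000} the optimum is $0$. I would try the following in order:
\begin{itemize}
\item Use the strengthened anchors: the facet bounds of Corollary~\ref{cor: d15 KUBT} and the vertex bounds $M_r(14)$.
\item Add dual versions of the unconditional factors.
\item Enumerate all admissible convolution splittings up to total grade $16$, adding rows as the dual solution of failed attempts suggests.
\end{itemize}
A secondary risk lies in the third step. A degenerate optimal basis may yield some $\lambda_i$ that are zero or slightly negative after rationalization. If that happens, I would perturb the objective, or re-solve the LP in exact rational arithmetic (for instance with an exact simplex solver), to obtain a basis on which the rational solution is strictly positive.
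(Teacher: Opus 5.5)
Your proposal is correct and follows the paper's proof: an explicit list of $987$ rows (unconditional convolutions or left-anchored $H*G$, encoded by their factor lists), selected by a floating-point LP whose numerical multipliers are discarded, then the square $987\times 987$ system solved over $\mathbb Q$, with strict positivity and the identity checked exactly in the $cd$-coordinates. The differences are only in implementation: the paper also re-checks the identity in the original flag coordinates modulo the generalized Dehn--Sommerville relations, and it removed the four negative exact multipliers that appeared at $q=14$ by a single row swap rather than by perturbation or an exact simplex solve.
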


\begin{remark}[Discovery of the certificate]
\label{rem: discovery of the certificate}
The identity \eqref{eq: d15 certificate} was found by searching, via
linear programming, for a nonnegative combination of admissible
inequalities equal to $-\mathbf 1$, in the sense made precise above in Subsection \ref{subsec: why an lp}.  In brief: the search began in dimension
$14$ at two independent thresholds ($q=27$ and $q=24$), each of which
produced a complete exact certificate; these were used to seed a descent
through intermediate thresholds down to $q=19$; a 
dimension-$14$ certificate was then lifted to dimension $15$ and the
threshold lowered again, from $19$ down to $16$; and finally a search over
the complete family of admissible dimension-$15$ inequalities produced
certificates at $q=15$ and, after a single row replacement eliminated four negative exact multipliers, at the target threshold $q=14$. Floating-point linear
programming (HiGHS) was used only to select candidate rows at each stage;
every reported multiplier was recomputed over $\mathbb Q$ and every
identity was verified exactly. The complete step-by-step account, including the precise linear program solved at each stage, is given in
Appendix~\ref {subsec: discovery lp formulation}.
\end{remark}

\begin{proof}[Computer-assisted proof of Proposition~\ref{prop: d15 certificate}] The machine-readable certificate, independent checker, and verification
report are available in the companion GitHub repository \url{https://github.com/shengtaoguo/d15-cap13-certificate}.
The companion repository contains the explicit $987$ inequalities $L_i$, an ordered
factor list for each $L_i$, and the exact rational multipliers $\lambda_i$ necessary for obtaining a certificate of type \ref{eq: why lp identity}.
The independent checker reconstructs every $L_i$ from its factor list using
\eqref{eq: d15 flag convolution}, verifies that each $L_i$ is either
unconditional or has the form $H*G$ described above, and checks that every
$\lambda_i$ is strictly positive. A verifier then checks
\eqref{eq: d15 certificate} holds in all $987$ degree-$15$ $cd$-coordinates.
Finally, in the original flag coordinates, the checker verifies that
$\mathbf 1+\sum_i\lambda_iL_i$ is a rational linear combination of the
generalized Dehn--Sommerville relations and therefore vanishes on every
polytope. All calculations in this verification are done in exact arithmetic using Python~3.10.
\end{proof}

% The machine-readable certificate, independent checker, and verification
% report are available in the companion GitHub repository \url{https://github.com/shengtaoguo/d15-cap13-certificate}.

% Run \texttt{python3 verification/verify.py} from the root of the repository.
% Python $3.10$ or latter version is required, but no external package is needed. A
% successful verification ends with a line beginning with \texttt{VERIFIED}.

Now, we can finish the proof of Theorem~\ref{thm: d15 cap13}.

\begin{proof}[Proof of Theorem~\ref{thm: d15 cap13}]
As observed at the beginning of the section, it suffices to work in
dimension $15$. Suppose that $P$ satisfies \eqref{eq: d15 counterexample}.
Every row in Proposition~\ref{prop: d15 certificate} is then nonnegative on
$P$. Evaluating \eqref{eq: d15 certificate} at $P$ gives
\[
 0\leq\sum_{i=1}^{987}\lambda_iL_i(P)=-\mathbf 1(P)=-1,
\]
a contradiction.
\end{proof}

\begin{remark}
The present certificate does not prove the next bound $12$. 
\end{remark}

\section{Acknowledgements}
\noindent\textbf{The role of AI.}
The Odin Automatic AI Research Agent was used as a sounding
board in the early stages of this project and suggested the initial proof behind the results which were later improved by the authors. The final proofs were written and verified by the authors. The Odin Automatic AI Research Agent was also used for generating the code for the computer-assisted portion of the proof in the paper.

\bibliographystyle{plain}
\bibliography{Biblio}

\newpage

\appendix

\section{Appendix: Details on the search for a certificate}
\label{subsec: discovery lp formulation}

The search for $L_i,\lambda_i$'s used in certificate \ref{eq: why lp identity} has the same linear algebra format in every
dimension, and its purpose is exactly the contradiction described
in subsection~\ref{subsec: why an lp}: writing $\mathcal W_d$ for the set of
degree-$d$ $cd$-words, each flag functional $L$ used at a given stage is
represented by its coefficient vector
$\boldsymbol{a}_{cd}(L)=(a_{L,w})_{w\in\mathcal W_d}\in
\mathbb Q^{\lvert\mathcal W_d\rvert}$; these vectors form the rows of a
matrix $A$; and, writing $\boldsymbol{e}_{c^d}$ for the vector with a $1$
in the $c^d$-coordinate and zeros elsewhere, the search looks for a
nonnegative multiplier vector $\boldsymbol\lambda$ with
\begin{equation}\label{eq: discovery dual}
 A^{\mathsf T}\boldsymbol{\lambda}=-\boldsymbol{e}_{c^d}.
\end{equation}
What changes between stages is only the dimension, the assumed threshold
$q$ on the number of facets of every $3$-face, and the available pool of
inequalities.

In dimension $14$, $\mathcal W_{14}$ has $610$ elements and the target
vector in \eqref{eq: discovery dual} is $-\boldsymbol{e}_{c^{14}}$. In
dimension $15$, $\mathcal W_{15}$ has $987$ elements and the target vector
is $-\boldsymbol{e}_{c^{15}}$. The complete dimension-$14$ systems at
$q=24$ and $q=27$ were assembled in the original flag coordinates together
with the generalized Dehn--Sommerville equalities; the searches at $q=23$
and $q=19$, and all the dimension-$15$ searches, used the $cd$-coordinates
directly.

The rows of $A$ were generated from the basic inequalities of
Sections~\ref{subsec: flag functionals and convolution}
and~\ref{subsec: counterexample inequalities}. An inequality on
$r$-polytopes was assigned grade $r+1$; convolution adds grades, so a
convolution produces an inequality on $d$-polytopes precisely when the
grades of its factors sum to $d+1$. The required total grade is therefore
$15$ in dimension $14$ and $16$ in dimension $15$. For example, in the
dimension-$15$ search the $3$-face inequality $f_2-q\geq0$ has grade $4$
and may be convolved with any ordered list of factors that are valid
without the temporary lower-bound assumption and whose grades sum to $12$.

All stages followed the same discovery-and-verification pattern. For the
current dimension and threshold, we formed a finite collection of valid
flag inequalities. The numerical searches were carried out with the HiGHS
linear-programming solver in floating-point arithmetic, using either its
interior-point method followed by its crossover procedure (which converts
the interior-point solution into a simplex basic solution), or its
dual-simplex method, depending on the stage. These computations were used
only to identify a small set of inequalities that might have nonzero
coefficients in a solution of \eqref{eq: discovery dual}; the numerical
coefficients were then discarded. For the selected set, the multipliers
were recomputed over $\mathbb Q$, and the resulting identity was verified
exactly. 

The subsections below specify, in each case, whether the starting
list came from an earlier certificate or from standard initial
inequalities.

\subsection{The initial dimension-$14$ certificates}
\label{subsec: discovery d14 search}

The first complete dimension-$14$ calculations were performed in the
original flag coordinates. The generalized Dehn--Sommerville relations and
the universal identity $f_{\varnothing}=1$ were imposed as equality
constraints, and a single variable $s$ relaxed every flag inequality by
the same amount. The same linear program can be written more economically
by parametrizing the solutions of the generalized Dehn--Sommerville
relations in $cd$-coordinates; the identity $f_{\varnothing}=1$ then
becomes $x_{c^d}=1$. In these coordinates, the linear program has the
equivalent reduced form
\begin{equation}\label{eq: discovery phase one}
 \min s
 \quad\text{subject to}\quad
 x_{c^d}=1,
 \quad A\boldsymbol{x}+s\mathbf 1\geq0,
 \quad s\geq0.
\end{equation}
A positive minimum shows that the unrelaxed inequalities are inconsistent
with $x_{c^d}=1$. The dimension-$14$ calculation first solved this program
at $q=14$ and $q=27$. Its minimum was zero at $q=14$, so that calculation
did not give a contradiction, whereas its minimum was positive at $q=27$.
To narrow the gap between these two results, the program next launched
complete calculations at the four intermediate values $q=16,19,21,24$ in
parallel. The calculation at $q=24$ was the next one to finish with a
positive minimum and an exact rational identity. Thus $q=27$ and $q=24$
provided two independently obtained starting certificates.

Here is how an exact identity was obtained at each of these two points.
Let $z$ denote the vector of the $2^{14}=16{,}384$ original flag
coordinates. In these coordinates, the complete system had the auxiliary
variable $s$, $1{,}144{,}309$ flag inequalities, and $61{,}441$ linear
equalities, including the generalized Dehn--Sommerville relations and the
universal identity $z_{\varnothing}=1$.

The original-coordinate system was solved through SciPy's
\texttt{linprog} interface to HiGHS~1.2.0. HiGHS first used its
interior-point algorithm to find a numerical minimum and then used its
simplex algorithm to obtain a numerical multiplier for each flag
inequality. The nonzero multipliers identified the inequalities retained
for the subsequent exact solution of \eqref{eq: discovery dual}. At both
$q=27$ and $q=24$, exactly $610$ of these multipliers were nonzero.

For each selected set of inequalities, the program solved the multiplier
equations again by Gaussian elimination with exact rational arithmetic,
taking advantage of the many zero entries in the equations. The
multipliers of the flag inequalities were required to be nonnegative,
while the multipliers of the generalized Dehn--Sommerville equalities were
unrestricted. Finally, the substitution of the rational multipliers gave
zero residual in all $2^{14}$ original flag coordinates, for both $q=27$
and $q=24$. When the generalized Dehn--Sommerville equalities are imposed,
the terms involving their multipliers vanish, and each result is an exact
identity involving $610$ flag inequalities. The two sets of $610$
inequalities were different. Their union contained $972$ distinct
inequalities, which, together with one standard inequality, formed the starting list for $q=23$.

\subsection{Lowering the dimension-$14$ threshold}
\label{subsec: discovery d14 descent}

Rather than solving the complete $q=23$ system from scratch, we began with
the $973$ inequalities and
added omitted inequalities only when they were violated by the current
feasible solution.

The $973$ inequalities did not yet constitute a certificate at $q=23$.
Each of the $972$ inherited inequalities came with an ordered list of the basic factors whose
convolution produced it. The program substituted $q=23$ into every factor
that depended on the threshold, repeated these convolutions, and retained
the standard inequality unchanged, thereby
obtaining $973$ inequalities valid under the new assumption. It first
solved \eqref{eq: discovery phase one} using only these inequalities. The
minimum was numerically zero, so there was a normalized candidate vector $\boldsymbol{x}$, with
$x_{c^d}=1$, satisfying all $973$ inequalities within numerical tolerance. 
Thus this initial list did not yet give a contradiction.

Recall that a possible counterexample must satisfy $L(\boldsymbol{x})\geq0$
for every available inequality $L$; the aim is to show that no normalized
vector $\boldsymbol{x}$ satisfies all these conditions. In the numerical
implementation, let $\widehat s$ denote the computed minimum in
\eqref{eq: discovery phase one}. The program treated a value
$\widehat s\leq\tau$, where $\tau=10^{-7}$, as a signal to continue the
search rather than as a proof of feasibility. The resulting numerical
vector was used only to find further valid inequalities that it violated.
At $q=23$, the program evaluated all $1{,}144{,}309$ available
inequalities at this vector. An inequality $L\geq0$ with
$L(\boldsymbol{x})<0$ was violated, and smaller values of
$L(\boldsymbol{x})$ represented larger violations. The same
constraint-generation scheme was used at $q=19$ and, with an implicit
search through the larger family, in the dimension-$15$ calculations
through $q=16$. Figure~\ref{fig:discovery-constraint-loop} shows the
common scheme.

Thus four successive batches enlarged the working list from $973$ to
$17{,}357$ inequalities. The next solve had positive minimum, and its
numerical multipliers selected $610$ inequalities. Recomputing those
multipliers over $\mathbb Q$ and substituting them in the original flag
coordinates verified an exact certificate at $q=23$. The $q=23$ certificate, together with the earlier $q=24$ certificate, was
then used to obtain a certificate at $q=19$.

A certificate at $q=19$ was already available, but we later carried out a separate
computation at $q=19$ as a check on the search, starting from $813$ seed
inequalities generated directly from the basic factors. The restricted
linear program was then solved nine times. Each of the first eight solves
had minimum zero, so a new batch of violated inequalities was identified and added.
The ninth solve, with a working list
of $27{,}847$ inequalities, had positive minimum. Its numerical
multipliers selected $610$ inequalities; recomputing their multipliers
over $\mathbb Q$ gave the exact identity at $q=19$. These $610$
inequalities were then passed to dimension $15$.

\begin{figure}[!t]
\centering
\resizebox{0.75\textwidth}{!}{%
\begin{tikzpicture}[
  >=stealth,
  node distance=5mm and 12mm,
  terminal/.style={draw, rounded corners=4pt, align=center, font=\small,
                   text width=47mm, inner sep=4pt,
                   execute at begin node={\hyphenpenalty=10000\relax}},
  process/.style={draw, rectangle, align=center, font=\small,
                  text width=47mm, inner sep=4pt,
                  execute at begin node={\hyphenpenalty=10000\relax}},
  decision/.style={draw, diamond, aspect=2.2, align=center, font=\small,
                   text width=31mm, inner sep=1.5pt,
                   execute at begin node={\hyphenpenalty=10000\relax}},
  lab/.style={font=\scriptsize, fill=white, inner sep=1pt},
  every path/.style={->, line width=0.5pt}
]
\node[terminal] (list) {Initial working list of\\exact valid inequalities};
\node[process, below=of list] (solve)
  {Numerically solve the current\\auxiliary linear program};
\node[decision, below=6mm of solve] (minimum)
  {Does the computed\\minimum satisfy\\$\widehat s>\tau$?};
\node[process, right=18mm of minimum] (feasible)
  {A numerical candidate vector\\for the current working list};
\node[process, below=of feasible] (scan)
  {Search a larger valid family\\for violated inequalities};
\node[process, below=of scan] (add)
  {Add a batch of newly violated\\inequalities in exact form};
\node[process, below=11mm of minimum] (confirm)
  {Confirm the positive minimum\\and obtain a basic numerical\\dual solution};
\node[process, below=of confirm] (select)
  {Nonzero dual multipliers identify\\candidate certificate rows};
\node[terminal, below=of select] (exact)
  {Recompute the multipliers over $\mathbb Q$\\and verify the identity in $cd$- and\\original flag coordinates};

\draw (list) -- (solve);
\draw (solve) -- (minimum);
\draw (minimum) -- node[lab,above] {no: $\widehat s\leq\tau$} (feasible);
\draw (feasible) -- (scan);
\draw (scan) -- (add);
\coordinate (loopright) at ([xshift=6mm]add.east);
\coordinate (looptop) at (loopright |- solve.east);
\draw (add.east) -- (loopright) -- (looptop)
  -- node[lab,midway,above] {repeat with the enlarged list} (solve.east);
\draw (minimum) -- node[lab,right] {yes} (confirm);
\draw (confirm) -- (select);
\draw (select) -- (exact);
\end{tikzpicture}
}
\caption{The common constraint-generation scheme used in the dimension-$14$
threshold descents at $q=23$ and $q=19$, and in the dimension-$15$ searches
from $q=19$ through $q=16$. Here $\widehat s$ is the computed minimum of
the auxiliary program and $\tau=10^{-7}$. Floating-point calculations
select rows only; the final identity is reconstructed and verified over
$\mathbb Q$.}
\label{fig:discovery-constraint-loop}
\end{figure}

\subsection{Lifting to dimension $15$ and lowering $q$ to $16$}
\label{subsec: discovery d15 search}

The passage to dimension $15$ had two steps: first the dimension-$14$
inequalities were lifted to dimension $15$, and then the threshold $q$ was
lowered successively from $19$ to $16$. We describe these two operations
separately.

Each of the $610$ inequalities in the dimension-$14$ certificate was
stored as an ordered list of factors whose convolution produced that
inequality. Such a list has total grade $15$. An inequality on a
$15$-polytope requires total grade $16$, so we inserted the trivial factor
$g_0=1$ on $0$-polytopes, which has grade $1$. The facet threshold
remained $q=19$ throughout this step. If a factor list has $m$ entries,
there are $m+1$ possible slots for $g_0$: before the first factor, between
two consecutive factors, or after the last factor. For each of the $610$
lists, we inserted $g_0$ into each possible slot and recomputed the
resulting convolution exactly. Because convolution is ordered, different
slots can produce different inequalities; conversely, different insertions
can also produce the same $cd$-coordinate vector. After equal vectors were
identified and duplicates removed, $1{,}999$ distinct dimension-$15$
inequalities remained. These were candidate inequalities for the new
search, not yet a dimension-$15$ certificate, and the dimension-$14$
multipliers were not carried over.

To initialize the dimension-$15$ search at $q=19$, we augmented the
$1{,}999$ lifted rows with $1{,}303$ grade-$16$ rows generated directly in
dimension $15$. Of these, $999$ were valid independently of the $3$-face
assumption. The remaining $304$ were obtained from $152$ unconditional
grade-$12$ functionals $G$ by forming both $(f_2-19)*G$ and $(f_0-5)*G$.
The latter family is redundant under $f_2\geq19$, but was included in the
search pool. The resulting restricted system therefore contained $3{,}302$
rows.

A floating-point solve with HiGHS produced nonzero dual multipliers on
$987$ rows. We retained these rows but discarded the numerical
multipliers. Solving the resulting $987$-by-$987$ system over $\mathbb Q$
and substituting the rational solution into \eqref{eq: discovery dual}
then verified the dimension-$15$ certificate at $q=19$ exactly.

The descents to $q=18,17,16$ applied the common loop in
Figure~\ref{fig:discovery-constraint-loop} one threshold at a time. For a
target value $q$, the exact certificate at $q+1$ was used only as a source
of ordered factor lists. The order of the factors was retained, while
every $q$-dependent factor and the resulting convolution were recomputed.
The preceding exact multipliers were not transported as coefficients of
the new certificate. Their values were used only to order the regenerated
seed rows, and, since no limit was placed on their number, every distinct
regenerated row was retained. After exact removal of duplicates and rows
already present among the $1{,}303$ standard seed rows, the preceding
certificates contributed $894$, $959$, and $961$ further rows at $q=18$,
$q=17$, and $q=16$, respectively.

Each restricted system was solved from scratch with the HiGHS
interior-point method. When $\widehat s\leq\tau$, the program evaluated
the available candidate inequalities at the resulting numerical vector.
Among these candidates, it retained only inequalities not already in the
working list and whose normalized violation exceeded the numerical
tolerance. If more than $16{,}384$ inequalities passed these two tests, it
added the $16{,}384$ most violated; otherwise, it added all of them. No
selected row was later discarded. The initial working lists at
$q=18,17,16$ therefore contained $2{,}197$, $2{,}262$, and $2{,}264$
inequalities, respectively. The loop reached $\widehat s>\tau$ on the
fifth, fifth, and seventh restricted solves; at those points the working
lists contained $65{,}448$, $67{,}798$, and $74{,}299$ inequalities,
respectively.

Once $\widehat s>\tau$, HiGHS solved the same restricted system again with
its crossover procedure enabled. This converts the interior-point solution
to a basic solution and supplies a sparse dual multiplier vector. At each
of the three thresholds, exactly $987$ dual multipliers were nonzero. The
floating-point values were then discarded. The corresponding square system
was solved over $\mathbb Q$, and the identity was checked exactly in all
$987$ $cd$-coordinates. An independent lift also verified the
corresponding identity in the original flag coordinates.

\subsection{The complete-list searches at $q=15$ and $q=14$}
\label{subsec: discovery d15 complete list}

At $q=15$, five rounds of row additions enlarged the working list to
$22{,}754$ inequalities. The sixth restricted solve again gave a computed
minimum essentially equal to zero. At the resulting numerical vector, the
separator found no new inequality in the then-current candidate family
whose normalized violation exceeded the numerical tolerance. This was not
an exact feasibility result; it showed only that the restricted
row-generation search had stalled.

We therefore replaced the preceding row-generation loop by a search over
every dimension-$15$ inequality obtainable from the basic factors
described above. We constructed this finite list in two parts. First, for
every $m\geq1$ and every ordered choice of primitive unconditional factors
$G_1,\ldots,G_m$ whose grades sum to $16$, we formed
$G_1*\cdots*G_m$. Second, for each conditional factor $H$ in
Section~\ref{subsec: counterexample inequalities} and every $m\geq0$, we
formed $H*G_1*\cdots*G_m$ for every ordered choice of primitive
unconditional factors such that the grades of $H,G_1,\ldots,G_m$ sum to
$16$. Thus $H$ may be followed by any number of unconditional factors,
including none. Its position first is required by
\eqref{eq: d15 left anchor}, because the counterexample assumption applies
to faces but not to quotients.

Every resulting coefficient vector was computed exactly in
$cd$-coordinates. When different factor lists produced the same vector, we
kept only one copy. This left $330{,}905$ unconditional vectors and
$36{,}844$ conditional vectors, for a total of $367{,}749$. The earlier
number $1{,}144{,}309$ refers instead to generated dimension-$14$ rows
represented in the original flag coordinates; the two numbers therefore
count different objects.

At $q=15$ and $q=14$ the search was allowed to use any of these
$367{,}749$ vectors.

At $q=15$ we began with the $987$ inequalities in the exact $q=16$
certificate, reconstructed from their recorded factor lists at $q=15$.
Solving the resulting $987$-by-$987$ instance of
\eqref{eq: discovery dual} over $\mathbb Q$ gave a unique multiplier
vector with $498$ negative components. Thus the preceding certificate
could not be reused unchanged at the new threshold. HiGHS then used its
dual-simplex algorithm to replace inequalities, drawing from the complete
list, until every component of the numerical multiplier vector was
nonnegative. To favor inequalities already selected, their objective
coefficients were set to zero, while those of all other inequalities were
set to one. Thus the calculation retained as many previously selected
inequalities as possible while allowing any inequality in the complete
list to replace one of them. The calculation at $q=15$ produced an exact
identity involving $987$ inequalities with positive coefficients.

At $q=14$ we likewise began with the $987$ inequalities in the exact
$q=15$ certificate, reconstructed at $q=14$. Their multipliers did not
preserve nonnegativity, so the calculation was repeated over the complete
list. When the first numerically selected set was solved exactly, four
rational coefficients were negative, although their decimal
approximations were close to zero. The program then tested every
replacement obtained by removing one of the $987$ selected inequalities
and adding one unselected inequality. One replacement made all $987$
rational coefficients strictly positive. These inequalities and
coefficients are those used in Proposition~\ref{prop: d15 certificate}.

\end{document}